\newtheorem{theorem}{Theorem}[section]
\newtheorem{claim}[theorem]{Claim}
\newtheorem{lemma}[theorem]{Lemma}
\theoremstyle{definition}
\newtheorem{corollary}[theorem]{Corollary}
\newtheorem{definition}[theorem]{Definition}
\newtheorem{example}[theorem]{Example}
\newtheorem{proposition}[theorem]{Proposition}
\newcommand{\A}{\mathcal A}
\newcommand{\B}{\mathcal B}
\newcommand{\D}{\mathcal D}
\newcommand{\G}{\mathcal G}
\renewcommand{\H}{\mathcal H}
\newcommand{\N}{\mathbb N}
\newcommand{\M}{\mathcal M}
\newcommand{\R}{\mathcal R}
\newcommand{\Z}{\mathbb Z}
\newcommand{\con}{\otimes}
\begin{document}
\title{From automatic structures to automatic groups}

\author{Olga Kharlampovich \and Bakhadyr Khoussainov
  \and Alexei Miasnikov}

\maketitle
\markboth{(Version of \today)}{(Version of \today)}
\pagestyle{plain}

\begin{abstract}
In this paper we introduce the concept of  a Cayley graph automatic group (CGA group  or graph automatic group, for short) which generalizes the standard notion of an automatic group.  Like the usual automatic groups graph automatic ones enjoy many nice properties:  these group are invariant under the change of generators, they are closed under direct and free products, certain types of amalgamated products, and  finite extensions.  Furthermore, the Word Problem in graph automatic groups is decidable in quadratic time.   However, the class of graph automatic groups is much wider then the class of automatic groups. For example,  we prove that all finitely generated 2-nilpotent groups  and Baumslag-Solitar groups $B(1,n)$ are graph automatic, as well as many other metabelian  groups.
\end{abstract}

\tableofcontents

\section{Introduction}

Automata theory has unified many branches of algebra, logic, and computer science. These include group theory (e.g., automatic groups \cite{Thurston}, branch and self-similar groups \cite{branchgroups,Nek05}), the theory of automatic structures \cite{BG00,KN95,KM10,Ru08}), finite model theory, algorithms and decidability, decision problems in logic \cite{Buchi69,Rabin69}, model checking and verification \cite{Vardi}. \
In this paper we  use finite automata in representation of infinite
mathematical structures, emphasizing automata representations of groups via their Cayley graphs.

\smallskip

The idea of using automata to investigate algorithmic, algebraic and logical  aspects of mathematical structures goes back to the work of 
B\"uchi and Rabin \cite{Buchi69,Rabin69}. They established an intimate relationship between automata and the monadic second order (MSO) logic, where, to put it loosely, automata recognizability  is equivalent to definability in the MSO logic.  Through this relationship   B\" uchi proved that the MSO theory of one successor function on the set $\N$ is decidable \cite{Buchi69}.  Rabin used automata to prove that the MSO theory of two successors is decidable \cite{Rabin69}. The latter implies decidability of the first-order theories of many structures, for example:  linear orders, Boolean algebras, Presburger arithmetic, and term algebras \cite{Rabin69}.

\smallskip

In 1995 Khoussainov and Nerode, motivated by investigations in computable model theory
and the theory of feasible structures, used finite automata for representation of structures  \cite{KN95}, thus initiating the whole development of the theory of automatic structures (e.g. see \cite{KN95,BG00,Ru08,RuPhD}). Here a structure is called {\em automatic} if it is isomorphic to a structure whose domain and the basic operations and  relations are recognized by finite automata.  Automaticity implies  the following three fundamental properties of structures:
\begin{enumerate}
\item  The first order theory of every automatic structure is uniformly  decidable \cite{KN95} \cite{BG00};
\item  The class of automatic structures is closed under definability (with parameters) in the first order logic and in certain extensions of it \cite{BG00} \cite{KRS05} \cite{Kuske08};
\item  There is an automatic structure (a {\em universal automatic structure})  in which all other automatic structures  are first-order interpretable \cite{BG00}.
\end{enumerate}
 There are many natural examples of automatic structures: some fragments of the arithmetic,  such as $(\N ; +)$, state spaces of computer programs, the linear order of the rational numbers, the configuration spaces of Turing machines. However, not that many groups are automatic in this sense. In particular (see Section \ref{se:FAgroups}), a finitely generated  group is automatic (as a structure) if and only if it is virtually abelian.

\smallskip

In modern group theory there are already several ways to represent groups by finite automata.
One of these   is to consider finite automata with letter-by-letter outputs, known as Mealy automata. Every such automaton determines finitely many length preserving functions on the set of strings $X^\ast$ over the alphabet $X$ of the automaton.  If these functions are permutations then  they generate a group, called an {\em automata group}.  Automata groups enjoy some  nice algorithmic properties, for instance, decidability of the word problem. These groups are also a source of interesting examples. For instance,  the famous Grigorchuk group is an automata group. We refer to the book   \cite{Nek06} for  detail.

\smallskip

Another way to use finite automata in group representations comes from algorithmic and geometric group theory and topology.  Ideas of Thurston, Cannon, Gilman, Epstein and  Holt brought to the subject a new class of groups, termed {\em automatic groups}, and revolutionized computing with infinite groups (see the book \cite{Thurston} for details).   
The initial   motivation  for introducing automatic groups was 
two-fold: to understand the fundamental groups of compact 3-
manifolds and to approach  their  natural geometric structures  via the geometry and complexity of the optimal normal forms; and  to make them tractable for computing.  

Roughly,  a group $G$  generated by a finite set $X$ (with $X^{-1} = X$) is automatic if there exists a finite automata recognizable (i.e., rational or regular) subset $L$ of $X^\ast$ such that the natural mapping $u\rightarrow \bar{u}$ from $L$ into $G$  is bijective, and the right-multiplication by each of the generators from $X$ can be performed by a finite automata. 

This type of automaticity implies
some principal  "tameness" properties enjoyed by every automatic group $G$:
\begin{enumerate}
\item [(A)]  $G$ is  finitely presented.
\item [(B)] The Dehn function in $G$ is at most quadratic.
\item [(C)] There is a constant $k$ such that the words from $L$  (the normal forms) of elements in $G$ which are at most distance $1$ apart in the  Cayley graph $\Gamma(G,X)$ of $G$ are $k$-fellow travelers in $\Gamma(G,X)$. 
\end{enumerate}
Most importantly, as was designed at the outset, the word problem in automatic groups is  easily computable (the algorithmic complexity of  the conjugacy problem is unknown): 
\begin{enumerate}
\item [(D)] The word problem in a given automatic groups  is decidable in quadratic time.
\item [(E)] For any word $w \in X^\ast$ one can find in quadratic time it representative in $L$.
\end{enumerate}
Examples of automatic groups include hyperbolic groups, braid groups, mapping class groups, Coxeter groups, Artin groups of large type,  and many other groups. In addition, the class of automatic groups
is closed under  direct sums, finite extensions, finite index subgroups, free products, and some particular amalgamated free products. 
Yet many  classes of groups that possess nice representations and algorithmic properties fail to be automatic. Most strikingly, a finitely generated nilpotent group is automatic if and only if it is virtually abelian. 
To this end we quote  Farb \cite{Farb}: `` The fact that
nilpotent groups are not automatic is a bit surprising and annoying,
considering the fact that nilpotent groups are quite common and have
an easily solved word problem." The book \cite{Thurston} and a survey  by  Gersten \cite{Gersten99}  also raise a similar  concern, though they do not indicate what could be possible generalizations. 
In the view of the initial goals, nowadays  we know precisely from  Epstein-Thurston classification what are compact
geometrisable 3-manifolds whose fundamental groups are automatic \cite{Thurston}. The upshot
of this classification,  is that the fundamental group of a compact geometrisable 3-manifold $M$
is automatic if and only if none of the factors in the prime decomposition of $M$
is a closed manifold modeled on Nil or Sol. Thus, it turned out that the class of automatic groups is nice, but not sufficiently wide. In the geometric framework the quest for a suitable  generalization  comes inspired by the following "geometric" characterization of automatic groups as as those that have a regular set of normal forms $L \subseteq X^\ast$  satisfying ($C$).  Two main ideas are to replace the regular language $L$ with some more general language, and keep the fellow traveller property, perhaps, in a more general form.  Groups satisfying ($C$) with the formal language requirement of rationality weakened or eliminated entirely are called {\em combable}. In general combable groups are less amenable to computation than automatic groups. We refer to  the work of Bridson \cite{Bridson} for an account of the relation between combable and automatic groups.  On the other hand, a more relaxed fellow traveller property, called {\em asynchronous fellow travelers},  was introduced at the very beginning, see  the book \cite{Thurston}.  But Epstein and Holt \cite{Thurston} showed that the fundamental group of a closed Nil manifold is not even asynchronously automatic.  Finally, a geometric generalization of automaticity, that covers the  fundamental groups of all compact 3-manifolds which satisfy the geometrization conjecture, was given by  Bridson and Gilman in \cite{BG}. 
However, as far as we know, these  geometrically natural generalizations loose the nice algorithmic properties mentioned above.

In fact, from the algorithmic standpoint, the  properties ($A$) and ($B$) can be viewed as unnecessary restrictions, depriving automaticity for  a wide variety of otherwise algorithmically nice groups such  as nilpotent or metabelian groups.
\vspace{2mm}

 In this paper we propose a natural generalization of automatic groups and  introduce the class of  Cayley graph automatic groups. A finitely generated group $G$ is called {\em Cayley graph automatic}, or {\em graph automatic or CGA} for short,  if it satisfies the definition of an automatic group as above, provided the condition that the alphabet $X$ is a set of generators of $G$, is removed.  Equivalently, a finitely generated group is {\em graph automatic} if its Cayley graph is an automatic structure in the sense of Khoussainov and Nerode.  The former definition immediately implies that the standard automatic groups are graph automatic.   However, there are many examples of graph automatic groups that are not automatic. These include Heisenberg groups, Baumslag-Solitar groups $BS(1,n)$, arbitrary finitely generated groups of nilpotency class two, and  some nilpotent groups of higher class, such as unitriangular groups $UT(n,\mathbb{Z})$, as well as many metabelian groups and solvable groups of higher class, like $T(n,\mathbb{Z})$. Moreover, we do not have the restrictions ($A$) and ($B$)  any more. As in the case of automatic groups the class of graph automatic groups is closed under free products, direct sums, finite extensions, wreath-products, 
 and certain types of amalgamated products,  etc.  This shows that the class of graph automatic groups, indeed, addresses some concerns mentioned above, but whether the class is good enough remains to be seen. Firstly, we do not know if  every 
finitely generated nilpotent group is graph automatic or not, in particular, the question if a finitely generated free nilpotent group of class 3 
is graph automatic is still open. Likewise, we do not know any geometric condition that would give a characterization of graph automatic groups similar to property ($C$). On a positive side though there is a crucial algorithmic result stating that again the word problem for graph automatic groups is decidable in quadratic time, so property ($D$) is preserved. Moreover, there is a new "logical" condition that gives a powerful test  to check if  a given group is graph automatic.  Namely, the group $G$ is graph automatic if and only if its Cayley graph $\Gamma(G,X)$ is first-order interpretable in an automatic structure or, equivalently, is interpretable in a fixed {\em universal} automatic structure (see above and also Section \ref{se:universal} for definitions and examples).  In addition,  there is a natural notion of a {\em graph biautomatic} group, which generalizes the standard class of biautomatic groups, with similar algorithmic properties. For instance, the conjugacy problem in graph biautomatic groups is decidable. In this case the proofs are simpler and more straightforward than in the classical one.
It seems it might be a chance to address the old problem whether automaticity implies biautomaticity in this new setting, which might shed some light on the old problem itself, but presently this is a pure speculation. 

\smallskip

On a philosophical note we would like to mention that there is a large overlap in ideas and proof methods  employed in the study of automatic structures and automatic groups of various types. In this paper we also aim to present the ideas and the proof methods in a unified form, which makes them  more available for use in both areas.  

\section{Finite automata}

We start with the basic definitions from finite automata theory.
Let  $\Sigma$ be  a finite  alphabet. The set of all finite strings over
$\Sigma$ is denoted by $\Sigma^\star$.  The variables $u$, $v$, $w$ represent strings. The empty string is  $\lambda$. The length of a string $u$ is denoted by $|u|$. For a set $X$, $P(X)$ is the set of all subsets of $X$. The cardinality of $X$ is denoted by $|X|$.

\begin{definition}
 A {\bf nondeterministic finite automaton} (NFA for short)
 over $\Sigma$ is a tuple $(S,  I, T, F)$, where $S$ is the set of {\bf states}, $I\subseteq S$ is the set of initial states,  $T$ is the {\bf transition function} $T:S\times \Sigma \rightarrow P(S)$,  and
  $F\subseteq S $ is the set of {\bf accepting states}.
We use the letter $\M$ possibly with indices to denote NFA.
\end{definition}
One can visualize an NFA  $\M$ as a labeled graph called the transition diagram of the automaton. The states of the NFA represent the  vertices  of the graph. We put a directed edge from state $s$ to state $q$ and label it with $\sigma$  if $q\in T(s,\sigma)$. These are called {\bf $\sigma$-transitions}.

\smallskip

Let $\M$ be an NFA. A {\bf run} of $\M$ on the string $w=\sigma_1\sigma_2\ldots \sigma_n$ is a sequence of states \ $
s_1, s_2, \ldots,s_n, s_{n+1} $
such that  $s_1\in I$ and $s_{i+1}\in T(s_i, \sigma_i)$ for all $i=1,\ldots, n$. The automaton might have more than one run on  the  string $w$. These runs of the automaton can be viewed as paths in the transition diagram labeled by $w$.

\begin{definition}
The automaton $\M$ accepts the string  $w=\sigma_1\sigma_2\ldots \sigma_n$
if $\M$ has a run  \ $s_1, s_2, \ldots,s_n, s_{n+1} $ on $w$
such that  $s_{n+1} \in F$.  \
The {\bf language accepted by $\M$}, denoted by $L(\M)$, is the following language:
$$
\{w \mid \  \mbox{the automaton $\M$ accepts $w$}\}.
$$
A language $L\subseteq \Sigma^\star$ is {\bf FA recognizable} if there exists an NFA $\M$ such that $L=L(\M)$.
\end{definition}

It is well-known that the set of all NFA recognizable languages in $\Sigma^\star$ forms a Boolean algebra under  the set-theoretic operations of union, intersection, and complementation; and every NFA recognizable language is also recognizable by a deterministic finite automata. By well-known Klenee's theorem the class of FA recognizable languages coincised with the class of regular languages. Therefore, often we refer to FA recognizable languages also as regular languages.

\smallskip

We now introduce the notion of automata that recognizes  relations over the set $\Sigma^\star$.
This is done through the following definitions and notation. \ We write $\Sigma_{\diamond}$ for $\Sigma \cup \{\Diamond\}$ where $\Diamond\not \in \Sigma$.
The {\em convolution of a tuple} $(w_1,\cdots,w_n) \in
\Sigma^{\star n}$ is the string
$$
\con (w_1,\cdots,w_n)
$$
of length $\max_i|w_i|$ over alphabet $(\Sigma_{\diamond})^n$ defined as follows.  The  $k$'th symbol of the string is $(\sigma_1,\ldots,\sigma_n)$ where $\sigma_i$ is the
$k$'th symbol of $w_i$ if $k \leq |w_i|$ and $\diamond$ otherwise. For instance, for $w_1=aabaaab$, $w_2=bbabbabbb$, and $w_3=aab$, we have
$$
\con (w_1, w_2, w_3)=
\begin{pmatrix}
a& a& b& a & a& a & b & \diamond & \diamond  \\
b & b & a & b & b & a & b & b & b\\
a& a & b & \diamond & \diamond  & \diamond & \diamond & \diamond & \diamond
\end{pmatrix}
$$

\begin{definition}
The {\em convolution of a relation} $R \subset \Sigma^{\star n}$ is the relation
$\con R \subset (\Sigma_{\diamond})^{n\star}$ formed as the set of convolutions
of all the tuples in $R$, i.e.,
$$
\con R = \{\con (w_1,\ldots, w_n) \mid (w_1,\ldots, w_n) \in R\}.
$$
\end{definition}

\noindent
The convolution operation codes up relations in $\Sigma^{\star n}$ into usual languages but over a special  alphabet.
This allows one to define finite automata recognizable relations in the standard way.

\begin{definition} \label{dfn:n-tape}   An $n$--ary
  relation $R \subset \Sigma^{{\star}n}$ is {\bf  FA recognizable}  if its convolution $\con R$ is
  recognizable by a finite automaton in the alphabet $(\Sigma_{\diamond})^n$. \end{definition}

Intuitively, a finite automaton recognizing an $n$-ary relation $R \subset \Sigma^{{\star}n}$ can be viewed as  a finite automaton with $n$ heads. All heads read distinct tapes and make  simultaneous transitions. Therefore, finite automata over the alphabet $(\Sigma_{\diamond})^n$ are also called  a {\em  synchronous $n$--tape  automaton} on $\Sigma$. FA recognizable relations in $\Sigma^{{\star}n}$ are also called  {\em regular relations}.

\begin{example}\label{Ex:orders}
Here are several examples of FA recognizable relations over  $\Sigma$.
These are examples of linear orders on $\Sigma^\star$ that are recognized by finite automata:
\begin{itemize}
\item  The lexicographic order on strings:  $\leq_{lex}=\{(x,y) \mid x, y \in \Sigma^\star$ and $x$ is lexicographically less than $y$ or $x=y\}$.
\item
The prefix order on strings: $\leq_{pref}=\{(x,y) \mid x, y \in \Sigma^\star$ and $x$ is a prefix of $y\}$.
\item  The length-lexicographic order on strings:   $\leq_{llex}=\{(x,y) \mid x, y \in \Sigma^\star$ and either $|x|<|y|$ or  ($|x|=|y|$ and $x\leq_{lex} y)\}$.
\end{itemize}
 \end{example}

We note that often in the text we identify the convoluted word $\con(w_1,\ldots, w_n)$ with the tuple $(w_1,\ldots, w_n)$. 
This will be clear from the context. 

\section{Automatic structures}

In this section we introduce automatic structures, give several examples, and provide some known results on automatic structures.  By a {\bf structure} $\A$ we  mean a tuple
$$(A; P_0^{n_0}, \ldots, P_k^{n_k}, f_0^{m_0},\ldots,  f_t^{m_t}),$$
 where the set $A$ is the domain of the structure $\A$, each $P_i^{n_i}$ is a relation of arity
$n_i$ on $A$, and each $f_j^{m_j}$ is a total operation of arity $m_j$ on $A$.
These relations and operations are often called atomic.
The structure $\A$ is {\bf relational} if it contains no operations. Every structure $\A$ can be transformed into a relational structure. This is done by replacing each atomic operation $f_j^{m_j}:A^{m_j} \rightarrow A$ with its  graph:
$$
Graph (f_j^{m_j})=\{(a_1, \ldots, a_{m_j},a) \mid f_j^{m_j}(a_1, \ldots, a_{m_j})=a\}.
$$
The sequence  of symbols $P_0^{n_0}, \ldots, P_k^{n_k}, f_0^{m_0},\ldots,  f_t^{m_t}$ is called a signature of the structure. Below is the key  definition of this paper:

\begin{definition} \label{CentralDefinition}
The structure  $\A=(A; P_0^{n_0}, \ldots, P_k^{n_k}, f_0^{m_0},\ldots,  f_t^{m_t})$ is called  {\bf automatic} if all the domain  $A$, the predicates $P_0^{n_0}$, $\ldots$, $P_1^{n_k}$, and the graphs of operations $Graph (f_0^{m_0})$, $\ldots$,$Graph(f_t^{m_t})$ are FA recognizable.
\end{definition}

Here are some examples of automatic structures:
\begin{example}
 The structure  $(1^\star; S,  \leq))$, where $S(1^n)=1^{n+1}$ and $1^n\leq 1^m$ iff $n\leq m$ for $n,m \in \omega$.
\end{example}

\begin{example}
 The structure $(\{0,1\}^\star; \leq_{lex}, \leq_{pref}, \leq_{llex})$, where the orders are defined in Example \ref{Ex:orders}.
\end{example}

\begin{example} \label{Ex:Presburger}
The structure $(Base_k; Add_k)$, where $Base_k=\{0,1,\ldots, k-1\}^\star \cdot \{1,\ldots, k-1\}$. In this example each word $w=x_0\ldots x_n \in Base_k$  is identified with the number
$$
val_k(w)=\sum_{i=0}^{n} x_i k^i.
$$
This gives the least significant digit first base-$k$-representation of natural numbers. The predicate $Add_k$ is the graph of the $k$-base  addition of natural numbers, that is $Add_k=\{(u,v,w) \mid val_k(u)+val_k(v)=val_k(w)\}$. This structure is isomorphic to
the natural numbers with addition $\mathcal P= \langle \mathbb{N}, +\rangle$ known as Presburger arithmetic.
\end{example}

\smallskip

\noindent
Let $\A$ be a structure. The {\bf isomorphism type} of the structure is the class of all structures isomorphic to it. We identify structures up to isomorphism. Therefore, we are interested in those structures whose isomorphism types contain automatic structures.

\begin{definition}
A structure $\B$ is {\bf automata presentable} if there exists an automatic structure $\A$ isomorphic to $\B$. In this case $\A$ is called an {\bf  automatic presentation} of $\B$.
\end{definition}

We would like to give several comments about this definition. The first is that an automatic presentation  $\A$ of a structure $\B$ can be viewed  as a finite sequence of automata
representing the domain, the atomic relations, and operations of the structure. The sequence is finite.
Hence, automatic presentations are just finite objects that describe the structure. The second is that if a structure $\B$ has an automatic presentation, then $\B$ has infinitely many automatic presentations. Finally,  in order to show that $\B$ has an automatic presentation, one needs to find an automatic structure $\A$ isomorphic to $\B$. Thus, to show that $\B$ does not possess an automatic presenation one needs to prove  that for all automatic presentations $\A$ all bijective mappings $f:\B \rightarrow \A$ fail to establish
an  isomorphism. In a logical formalism the defnition of automaticity  is a $\Sigma_1^1$-definition in the language of arithmetic.

\smallskip

Since we are mostly interested in the isomorphism types of the structures, we often abuse our definitions and refer to automata presentable structures as automatic structures. Below we give some examples
of automatic (automata presentable) structures.

\begin{itemize}
\item  The Boolean algebra $\B_{\omega}$ of finite and co-finite subsets of $\N$. To show that $\B_{\omega}$ is automata presentable,  we code elements of $\B_{\omega}$ as finite binary strings in a natural way. For example, the string $01101101$ represents the infinite set $\{1,2,4,5,7,8,9,10,11,\ldots\}$ and the string $0110110$ represents the finite set $\{1,2,4,5\}$.  Under this coding,   $\B_{\omega}$ is  automata presentable.
\item The additive group $(\Z, +)$ is automata presentable.
\item  Finitely generated abelian groups are automata presentable. This follows from the fact that
such groups are finite direct sums of $(\Z, +)$ and finite abelian groups.
\item  Small ordinals of the form  $\omega^n$, where $n\in \N$.
\end{itemize}

For a given formula $\phi(\bar{x}_1, \ldots, \bar{x}_k)$,
we set $\phi(\A)$ be all tuples $(\bar{a}_1, \ldots, \bar{a}_k)$ in structure $\A$ that satisfy $\A$.
We now give the following definition that will be used in this paper quite often:

\begin{definition}  \label{Dfn:Interp}
A structure $\B=(B; R_1,\ldots, R_m)$ is {\bf interpretable in structure} $\A=(A,S_1,\ldots, S_n)$ if there are 
formulas 
$$
\D(\bar{x}),  \ \phi_1(\bar{x}_1,\ldots, \bar{x}_{k_1}), \ldots,  \ \phi_m(\bar{x}_1,\ldots, \bar{x}_{k_m})
$$
of the first order logic such that:
\begin{enumerate}
\item all tuples $\bar{x}$, $\bar{x}_1$, $\ldots$, $\bar{x}_{k_n}$ of variables have the same length, and 
\item The structure $(D(\A); \phi_1(\A), \ldots, \phi_m(\A))$
is isomorphic to $\B$.
\end{enumerate}
\end{definition}

The following is a foundational theorem in the study of automatic structures. The proof of the theorem follows from closure properties of finite automata under set-theoretic Boolean operations, the projection operation, and decidability of the emptiness problem for automata. Recall that the emptiness problem asks if there exists an algorithm to check if the language $L(\M)$ of  a given finite automaton $\M$ is empty or not. 

\begin{theorem}[The Definability and Decidability Theorem] \cite{BG00} \cite{KN95} \cite{KRS05}
\label{FDT}
\begin{enumerate}
\item  There is an algorithm that, given an automatic presentation of any structure $\A$  and a first-order formula $\varphi(x_1, \ldots, x_n)$, produces an automaton recognizing those tuples $(a_1, \ldots, a_n)$ that make the formula true in $\A$.
\item If a structure $\A$ is first-order interpretble in an automatic structure $\B$ then $\A$ has an automatic presentation.
\item The first-order theory of every automatic structure is decidable.\qed
\end{enumerate}
\end{theorem}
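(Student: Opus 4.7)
The plan is to prove the three parts essentially in the order given, with part (1) doing all the real work and parts (2) and (3) following by direct application.

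For part (1), I would induct on the structure of the formula $\varphi(x_1,\ldots,x_n)$. The base case consists of atomic formulas: by assumption, each atomic relation $S_i$ of $\A$ is FA recognizable, and equality $x=y$ on $A$ is FA recognizable by a simple two-tape automaton that compares symbol-by-symbol and rejects on any diamond mismatch, so any atomic formula yields an FA recognizable relation over $A$. The inductive step then needs four closure properties of FA recognizable relations on $A$: intersection, union, complement, and projection. Intersection and union follow from the standard product construction on NFAs (together with the fact that $A$ itself is regular, which lets us intersect to keep the domain correct when combining formulas of different arities). Complementation follows from determinization and state-swapping, again intersected with $A^n$ on the appropriate tapes. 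Projection, corresponding to $\exists x_i\,\varphi$, is handled by nondeterministically guessing the $i$-th tape: formally one builds an NFA over $(\Sigma_\diamond)^{n-1}$ that, at each step, guesses a symbol (or $\diamond$) for the dropped coordinate and simulates the old automaton.

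The main technical nuisance, and the place I would be most careful, is the convolution convention. When we existentially quantify out a variable, the witness string may be longer than the other strings in the tuple; the old automaton accepts the padded convolution, but after projection we need the new automaton to work on an unpadded convolution of the surviving tuple. The standard fix is to let the projected automaton continue reading trailing $\diamond$ symbols on the remaining tapes (formally, appending a ``$\diamond$-loop'' phase at the end) so that extra symbols of the guessed witness are still consumed, while also verifying that no surviving coordinate contains a $\diamond$ followed by a non-$\diamond$. This step requires spelling out the bookkeeping but is routine; everything else is immediate from finite automata theory.

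For part (2), suppose $\B$ is first-order interpretable in automatic $\A$ via formulas $\D,\varphi_1,\ldots,\varphi_m$ in the sense of Definition~\ref{Dfn:Interp}. Apply part (1) to each of these formulas to obtain automata recognizing $\D(\A)$ and each $\varphi_i(\A)$. By assumption the resulting structure is isomorphic to $\B$, so this is an automatic presentation of $\B$ (after, if desired, contracting the $k$-tuples to single strings via a convolution-like coding, which is itself an FA recognizable operation). For part (3), given a sentence $\varphi$, part (1) produces an automaton $\M_\varphi$ over a one-letter alphabet whose language is nonempty iff $\varphi$ holds in $\A$; since the emptiness problem for finite automata is decidable, this yields a decision procedure for the first-order theory of $\A$, uniform in the automatic presentation.
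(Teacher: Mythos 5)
Your proposal is correct and follows exactly the route the paper indicates: the paper gives no detailed proof of Theorem~\ref{FDT} (it cites \cite{BG00}, \cite{KN95}, \cite{KRS05} and remarks only that the result ``follows from closure properties of finite automata under set-theoretic Boolean operations, the projection operation, and decidability of the emptiness problem for automata''), and your induction on formula structure, with the careful treatment of padding under projection and the reduction of parts (2) and (3) to part (1), is precisely the standard argument being alluded to.
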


Note that there are several generalizations of this theorem to logics extending the first order logic. One important generalization is the following.
To the first order logic $FO$ add the following two quantifiers: $\exists^{\omega}$ (there exists infinitely many) and $\exists ^{n,m}$ (there exists $m$ many modulo $n$). Denote the resulting logic as $FO+\exists^{\omega}+\exists^{n,m}$. The theorem above can be extended to this  extended logic \cite{RuPhD} and other automata presentable structures \cite{Kuske08}.

\smallskip

We will be using the theorem above without explicit  references. For instance, the Presburger arithmetic ${\mathcal P} = \langle \mathbb{N}, +\rangle$ is clearly an automatic structure by Example \ref{Ex:Presburger}. Hence any structure definable in ${\mathcal P}$ is automatic. We will use this observation  in some of our arguments.


Furthermore, there are some {\em universal} automatic structures, i.e., automatic structures $\A$ such that a structure $\B$ has an automatic presentation  if and only if it is first-order interpretable  in $\A$.
Consider the following two structures:

\begin{example} 
$$\N_2 = (\mathbb{N}; +,|_2),$$
where $+$ is the standard addition and $x|_2y \Leftrightarrow x = 2^k \ \& \  y = x\cdot z$ for some $k, z \in \mathbb{N}$.
\end{example}

\begin{example}  \label{ex:free-monoid}
$${\mathcal M} = (\Sigma^\ast; R_a(x,y), x\preceq y, el(x,y))_{ a \in \Sigma},$$
 where $\Sigma$ is a finite alphabet with $|\Sigma| \geq 2$, $R_a(x,y) \leftrightarrow y = xa$, $x\preceq y \leftrightarrow $ $x$ is a prefix of $y$, $el(x,y) \leftrightarrow |x| = |y|$. \qed
\end{example}

The following theorem gives a pure  model theoretic characterization of automatically presentatble structures.
\begin{theorem} \cite{BG00} \label{th:universal}
The structures $\N_2$  and ${\mathcal M}$ are universal automatic structures. In particular, $\N_2$ and ${\mathcal M}$ are interpretable in each other. 
\end{theorem}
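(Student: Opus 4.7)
The plan is to verify that $\N_2$ and $\M$ are themselves automatic, then prove the nontrivial direction of universality for $\M$ directly, and finally transfer the result to $\N_2$ via a mutual-interpretability argument. First, both structures are readily seen to be automatic: in $\N_2$, binary addition is accepted by the classical least-significant-digit-first adder automaton (as in Example \ref{Ex:Presburger} with $k=2$), and $x|_2 y$ is FA recognizable because its convolution is accepted by an automaton that checks $x$ is of the form $0^\ast 1$ (a power of $2$ in LSD-first binary) and that all bits of $y$ strictly below the single $1$ of $x$ are $0$. For $\M$, each $R_a$, the prefix order $\preceq$, and the equal-length relation $el$ are immediately FA recognizable over the convolution alphabet. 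Combined with Theorem \ref{FDT}(2), this already yields the easy direction: every structure FO-interpretable in $\N_2$ or $\M$ is automatic.

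The principal step is the converse for $\M$: every automatic structure $\B$ is FO-interpretable in $\M$. Fix an automatic presentation of $\B$ over an alphabet $\Sigma$ which, after embedding, we may take to coincide with the alphabet of $\M$ (using $|\Sigma|\geq 2$). It suffices to prove that every FA recognizable relation $R\subseteq\Sigma^{\ast n}$ is FO-definable in $\M$, for then the domain and all atomic relations and operation graphs of the presentation become FO-definable and assemble into the interpretation. To define $R$, pick an NFA with state set $S$ whose accepted language is $\con R$. Encode an accepting run on $(w_1,\ldots,w_n)$ as a tuple of auxiliary indicator strings $(y_s)_{s\in S}$, each of length $\max_i|w_i|+1$, where $y_s$ carries a designated marker symbol at position $i$ precisely when the run visits state $s$ at time $i$. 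The FO formula asserting $(w_1,\ldots,w_n)\in R$ then existentially quantifies over such $y_s$ and expresses: an initial state's indicator marks the first position; a final state's indicator marks the last; exactly one $y_s$ marks each position; and transitions are consistent with the letters of $w_1,\ldots,w_n$ at every position. The crucial observation is that ``the letter of $w_j$ at position $p$ is $a$'' is FO-expressible from $R_a$, $\preceq$ and $el$ by quantifying over prefixes of synchronized length, so positions can be addressed without arithmetic.

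Given universality of $\M$, the universality of $\N_2$ reduces to producing an FO-interpretation of $\M$ inside $\N_2$, because interpretations compose. Here the predicate $|_2$ is essential: first define ``$p$ is a power of $2$'' by $p|_2 p$, then extract the $i$-th binary digit of $n$ using divisibility of $n$ by successive powers of $2$, which becomes expressible via $+$ and $|_2$, and finally identify $\Sigma=\{0,1\}$ and $\Sigma^\ast$ with the natural numbers whose binary expansion begins $1w$; under this identification $R_a$, $\preceq$ and $el$ translate into FO formulas over $(+,|_2)$. Conversely, since $\N_2$ is automatic the universality of $\M$ gives an FO-interpretation of $\N_2$ in $\M$, so the two structures are bi-interpretable and each is universal. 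The main obstacle throughout is the encoding step in the second paragraph: one must verify, working only with $R_a$, $\preceq$, and $el$, that the transition-consistency condition is genuinely first-order, the decisive trick being that existentially quantified prefixes synchronized via $el$ serve as position pointers without any external arithmetic address mechanism.
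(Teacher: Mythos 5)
The paper does not prove this theorem at all --- it is imported verbatim from \cite{BG00} as a cited black box, so there is no in-paper argument to compare against. Your reconstruction is essentially the standard Blumensath--Gr\"adel proof and is correct in outline: automaticity of both structures, definability of every regular relation in $\M$ by existentially quantifying run-encoding strings addressed through length-synchronized prefixes, transfer to $\N_2$ via bit extraction from $+$ and $|_2$, and composition of interpretations. The only detail you gloss over is the padding: the NFA runs on the convolution over $(\Sigma_\diamond)^n$, so the transition-consistency clause must also express ``position $p$ exceeds $|w_j|$,'' which is definable from $el$ and $\preceq$ exactly as the paper defines $|u|<|v|$ in its later proof that $\Gamma_{free}(A)$ is universal --- a minor omission, not a gap.
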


\section{Cayley graphs}

In the next section we will introduce automaticity into groups through their Cayley graphs.
This section recalls the definition of Cayley graphs and some of their basic properties.

\smallskip

Let $G$ be an infinite group generated by a finite set $X$. There exists a natural onto map from $X^\star$
into $G$ mapping the words $v$ into the group elements $\bar{v}$. {\bf The word problem} for $G$ (with respect to $X$)
is the following set
$$
W(G,X)=\{(u,v)\mid u,v \in X^{\star}\ \& \ \bar{u}=\bar{v} \ \mbox{in group G}\}.
$$
The word problem for $G$ is decidable if there exists an algorithm that given two words $u, v\in X^{\star}$ decides
if $\bar{u}=\bar{v}$. It is not hard to see that decidability of $W(G,X)$ does not depend on finite set of generators for $G$.

\smallskip

The group $G$ and the finite set $X$ of generators determine the following graph,
called a {\em Cayley graph of $G$}, and denoted by $\Gamma(G,X)$.
The vertices of the graph are the elements of the group. For each vertex $g$ we put a directed edge from $g$ to $gx$, where $x\in X$, and label the edge by $x$.  Thus, $\Gamma(G,X)$ is a labeled directed graph.

  We view a labeled directed graph $\Gamma=(V,E)$ with the labels of the graph  from a finite set
$\Sigma=\{\sigma_1, \ldots, \sigma_n\}$ as the following structure:
$$
(V, E_{\sigma_1}, \ldots, E_{\sigma_n}),
$$
where $E_{\sigma}=\{(x,y) \mid (x,y)\in E$ and the label of $(x,y)$ is $\sigma\}$ for $\sigma\in \Sigma$.
\smallskip

For the next lemma we need one definition from the theory of computable structures.  We say that a labeled directed graph $(V, E)$ is {\bf computable} if its vertex set $V$ and the labeled edge sets $E_x$, $x\in X$,  are all
computable subsets of  $\Sigma^\star$, where $\Sigma$ is a finite alphabet.  We refer to the set
$\Sigma^\star$ as {\em the domain of discourse}.  Thus, computability of
the graph $(V, E)$ states  that
\begin{enumerate}
\item There is an algorithm that given a vertex  $u$ from the domain of discourse,
decides if $u\in V$, and
\item  There is an algorithm that,  given any two  vertices $v$ and $w$ from $V$ and a label $x\in X$,
decides if there exists an edge from $v$ to $w$ labeled by $x$.
\end{enumerate}

Here are some basic properties of the Cayley graph $\Gamma(G,X)$.

\begin{lemma}
The Cayley graph $\Gamma(G,X)$ satisfies the following properties:
\begin{enumerate}
\item The graph is strongly connected, that is between any two vertices of the graph there is a path connecting one to another.
\item The out-degree and in-degree of each node is $|X|$.
\item The graph is transitive, that is, for any two vertices $g_1$ and $g_2$ of the graph there exists an automorphism $\alpha$ of $\Gamma(G,X)$ such that $\alpha(g_1)=g_2$.
\item The group of (label respecting)  autmorphisms of $\Gamma(G,X)$ is isomorphic to $G$.
\item The graph $\Gamma(G,X)$ is computable if and only if the word problem in $G$ is decidable.
\end{enumerate}
\end{lemma}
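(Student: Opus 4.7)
The plan is to reduce properties (1)--(4) to elementary consequences of the left regular action of $G$ on itself, and to handle (5) by a direct back-and-forth between words in $X^\ast$ and vertices of $\Gamma(G,X)$.

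For (1), since $X$ generates $G$ and is symmetric ($X=X^{-1}$, as assumed earlier in the paper), any element $g^{-1}h$ can be written as a word $x_1\cdots x_n$ in $X^\ast$; then $g,gx_1,gx_1x_2,\ldots,h$ is a directed path, so $\Gamma(G,X)$ is strongly connected. For (2), the out-edges from $g$ are exactly $(g,gx)$ for $x\in X$, and the in-edges at $g$ with label $x$ all originate at $gx^{-1}$; each label therefore contributes exactly one edge to each side, giving in- and out-degree $|X|$. For (3) and (4), define $L_a\colon h\mapsto ah$ for $a\in G$. Each $L_a$ sends an $x$-edge $(h,hx)$ to the $x$-edge $(ah,ahx)$, hence is a label-preserving automorphism, and $L_{g_2g_1^{-1}}$ sends $g_1$ to $g_2$, which establishes transitivity. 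Conversely, any label-preserving automorphism $\alpha$ is determined by $\alpha(e)$: if $\alpha(e)=a$ and $h=x_1\cdots x_n$, then the path $e,x_1,x_1x_2,\ldots,h$ must map to $a,ax_1,\ldots,ah$, so $\alpha=L_a$. Thus $a\mapsto L_a$ is the desired isomorphism $G\cong\mathrm{Aut}(\Gamma(G,X))$.

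For the interesting direction of (5), suppose $\Gamma(G,X)$ is computable with $V$ and each $E_x$ presented as decidable subsets of $\Sigma^\ast$. Pick any $v_0\in V$ (possible since $V$ is nonempty and decidable); this vertex will play the role of the identity. Given $w=x_1\cdots x_n\in X^\ast$, inductively compute the vertex $v(w)$ reached by following the labeled edges from $v_0$: at each step property (2) guarantees a unique $x_i$-successor, and by decidability of $E_{x_i}$ this successor can be found by searching through $V$. Then $\bar u=\bar v$ in $G$ iff $v(u)=v(v)$, so the word problem is decidable. Conversely, if the word problem is decidable, let $V\subseteq X^\ast$ be the set of length-lexicographically least words in each equivalence class; membership in $V$ is decidable by checking all shorter or lex-smaller strings using the word-problem oracle, and for $u,v\in V$ and $x\in X$ the relation $(u,v)\in E_x$ amounts to the decidable condition $\overline{ux}=\bar v$.

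The only real subtlety is conceptual and lies in (5): the abstract graph $\Gamma(G,X)$ does not single out a ``basepoint'' vertex for the identity, so one must be chosen before translating paths to words. The vertex-transitivity from (3) makes this choice harmless, since any two choices of basepoint yield isomorphic computations and therefore the same decidability verdict. Everything else is routine bookkeeping.
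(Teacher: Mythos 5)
Your proposal is correct and follows essentially the same route as the paper: parts (1)--(4) are the standard facts about the left regular action (which the paper simply declares standard), and for part (5) you use the same two constructions --- length-lexicographically least representatives with the word-problem oracle in one direction, and tracing labeled paths from a chosen vertex to reduce word equality to vertex equality in the other. Your explicit remark about fixing a basepoint $v_0$ is a small clarification of a step the paper leaves implicit, not a different argument.
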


\begin{proof} The first four parts of the lemma  are standard. The last part of the lemma needs an explanation. Assume that the word problem $W(G,X)$ in $G$  is decidable. Then  there exists an algorithm that, given any two words $w$ and $v$ over $X$, decides if $v=w$ in the group $G$. Now  we construct the graph $\Gamma(G,X)$ as follows.  The vertex set  $V$ of the graph consists of all words $v\in X^\star$ such that any word $w$ that is equal to $v$ in
$G$ is length-lexicographically larger than or equal to $v$. Clearly, this set $V$ of vertices is computable. Since the word problem is decidable in $G$, we can use the algorithm for the word problem to decide  if there exists an edge from $v_1$ to $v_2$ labeled by  $x\in X$. This shows that $\Gamma(G, X)$ is a computable graph.  Assume that the Calyey graph $\Gamma(G,X)$ is computable.  Then given any two words $w_1$  and $w_2$ in $X^{\star}$ one can effectively find two vertices $v_1$ and $v_2$ that represents $w_1$ and $w_2$ in the graph, respectively.  Then $w_1=w_2$ in the group $G$ if and only if $v_1=v_2$. Hence the word problem in $G$ is decidable.
\end{proof}

\section{$\aleph_1$-categoricity}

We prove one model-theoretic property of Cayley graphs that has an algorithmic implication.
This will have a direct relation with automaticity. 
We start with one important definition from model theory \cite{Ho93}. A complete theory $T$ in the first order logic  is  called {\em $\aleph_1$-categorical}
if all models of $T$ of cardinality $\aleph_1$ are isomorphic.
The lemma below shows  that the theory of every infinite Cayley graph is $\aleph_1$-categorical.
This result has two consequences. One is that decidability of the word problem for $G$ is equivalent to
decidability of the first order theory of its Cayley graph. The other will have a direct relation with automaticity that will be seen in a later 
sections.

\smallskip

We start with a lemma (interesting in its own right)  that is true for all locally finite labeled directed graphs. In particular,
the lemma can be applied to Cayley graphs. Recall that a directed graph is {\bf locally finite} if every vertex
of the graph has finitely many in-going and out-going edges. So, let $\Gamma$ be a locally finite labeled and directed graph. For vertices $x,y$ of the graph $\Gamma$, we set $d(x,y)$ be the length of the shortest path from $x$ to $y$. Then the $n$-ball around a vertex $x$ is the set
$$
B_n(x)=\{y \mid d(x,y)\leq n\}.
$$
\begin{lemma}
Let $\Gamma_1$ and $\Gamma_2$ be locally finite labeled connected and directed  graphs.
Assume that $a$ and $b$ are vertices of $\Gamma_1$ and $\Gamma_2$, respectively, such that for
all $n\in \N$ there is an isomorphism from $B_n(a)$  to $B_n(b)$ sending $a$ to $b$. Then there exists an isomorphism $\alpha:\Gamma_1 \rightarrow \Gamma_2$ such that
$\alpha(a)=b$.
\end{lemma}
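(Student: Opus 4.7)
The plan is a standard König's lemma / compactness argument. For each $n$, let $T_n$ denote the (nonempty, by hypothesis) set of label-preserving isomorphisms $f : B_n(a) \to B_n(b)$ (of the induced subgraphs) sending $a$ to $b$. Local finiteness of $\Gamma_1$ implies that each $B_n(a)$ is finite, so each $T_n$ is finite.

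First I would verify that if $f \in T_n$ and $k \le n$, then $f$ restricts to an element of $T_k$. The key point is that distances from $a$ computed inside $B_n(a)$ agree with distances from $a$ computed in $\Gamma_1$ (any shortest path from $a$ to a vertex at distance $\le n$ passes only through vertices at distance $\le n$, so the shortest path remains inside $B_n(a)$). Hence $f$ preserves distance from $a$, so $f(B_k(a)) = B_k(b)$ for $k \le n$, and $f|_{B_k(a)} \in T_k$. This makes $T := \bigsqcup_n T_n$ into a tree under the restriction relation (the parent of $f \in T_n$ is $f|_{B_{n-1}(a)} \in T_{n-1}$), with root the unique element of $T_0$ sending $a$ to $b$.

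Now I apply König's lemma. The tree $T$ is infinite (since every $T_n$ is nonempty by hypothesis) and finitely branching (since each $T_n$ is finite, any node has finitely many children). So there is an infinite branch $f_0, f_1, f_2, \dots$ with $f_n \in T_n$ and $f_{n+1}|_{B_n(a)} = f_n$. Define $\alpha := \bigcup_n f_n$. Connectedness together with local finiteness gives $\Gamma_1 = \bigcup_n B_n(a)$ and $\Gamma_2 = \bigcup_n B_n(b)$, so $\alpha$ is a well-defined map from $\Gamma_1$ onto $\Gamma_2$. Since each $f_n$ is a label-respecting bijection on its domain and adjacent vertices in $\Gamma_1$ both lie inside some $B_n(a)$, the map $\alpha$ is a bijection preserving the labeled edge relation in both directions, hence an isomorphism with $\alpha(a) = b$.

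The only subtle point I would actually have to write out carefully is the first step — that the restriction of an $n$-ball isomorphism sending $a$ to $b$ yields a $k$-ball isomorphism for $k < n$. Without this, the set $T$ would not naturally form a tree and König's lemma would not apply directly. Everything else is bookkeeping; the hypothesis provides nonemptiness of each level $T_n$, local finiteness provides finiteness of each level, and König's lemma does the rest.
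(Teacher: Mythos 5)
Your proof is correct and follows essentially the same route as the paper's: build the finitely branching tree of ball isomorphisms under restriction, apply K\"onig's lemma, and take the union along an infinite branch. The one place you add value is in spelling out why an isomorphism in $T_n$ restricts to one in $T_k$ (via preservation of distance from the root), a step the paper asserts without justification.
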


\begin{proof}
Each of the $n$-balls $B_n(a)$ and $B_n(b)$ is a finite set. There are finitely many isomorphisms from $B_n(a)$ into $B_n(b)$ that send $a$ to $b$. Denote this set by $I_n$. By assumption, $I_n\neq \emptyset$ for all $n\in \N$. Each such isomorphism $\alpha\in I_n$ induces an isomorphism $\alpha^\prime$ from $B_{n-1}(a)$ to $B_{n-1}(b)$; so, $\alpha' \in I_{n-1}$. It is easy to see that the collection of all finite isomorphisms from $B_n(a)$ into $B_n(b)$, where $n\in \N$, determines a finitely branching infinite tree (edges connect the isomorphisms $\alpha$ and $\alpha^\prime$).  Now apply K\"onig's lemma to select an infinite path $P$ along the tree. This path $P$ determines an isomorphism from $\Gamma_1$ to $\Gamma_2$
that sends $a$ to $b$.
\end{proof}

\begin{lemma}
Let $G$ be a group generated by a finite set $X$. The theory of the Cayley graph $\Gamma(G,X)$ is $\aleph_1$-categorical.
\end{lemma}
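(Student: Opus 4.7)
The plan is to show that every model of $\text{Th}(\Gamma(G,X))$ splits into connected components each isomorphic to $\Gamma(G,X)$, so that a model of cardinality $\aleph_1$ is forced to be a disjoint union of $\aleph_1$ copies of $\Gamma(G,X)$; any two such disjoint unions are obviously isomorphic. We may assume $G$ is infinite, because if $G$ is finite then $\Gamma(G,X)$ is a finite structure, its theory pins down the cardinality, and there are no models of cardinality $\aleph_1$ at all. Note that $\Gamma(G,X)$ is countable since $X$ is finite.

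First I would axiomatize the local type. Because $\Gamma(G,X)$ is vertex-transitive and locally finite (each vertex has in- and out-degree $|X|$), for every $n$ all pointed $n$-balls $(B_n(g),g)$ in $\Gamma(G,X)$ are pairwise isomorphic via a label-preserving isomorphism; call this finite pointed labeled graph $T_n$. Since $T_n$ is finite and the predicate ``$u$ lies at graph-distance at most $n$ from $v$'' is expressible in first-order logic as a finite disjunction over labeled paths of length at most $n$, the sentence $\phi_n$ saying ``for every vertex $v$, the pointed $n$-ball around $v$ is isomorphic to $T_n$'' is first-order. Each $\phi_n$ holds in $\Gamma(G,X)$, so $\{\phi_n : n\in\N\} \subseteq \text{Th}(\Gamma(G,X))$.

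Next, let $M \models \text{Th}(\Gamma(G,X))$. The sentences $\phi_n$ force $M$ to be locally finite with the same pointed $n$-ball types as $\Gamma(G,X)$. Pick any connected component $C$ of $M$, any $b\in C$, and any $a\in \Gamma(G,X)$. For every $n$ there is then a pointed isomorphism $(B_n(a),a) \to (B_n(b),b)$, so by the previous lemma there exists an isomorphism $\Gamma(G,X) \to C$ sending $a$ to $b$. In particular, every connected component of $M$ is isomorphic to $\Gamma(G,X)$ and is therefore countable.

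Finally, if $|M| = \aleph_1$ then $M$ must consist of exactly $\aleph_1$ connected components, because countably many countable components would produce a countable model. Any two models of this form are isomorphic as disjoint unions of $\aleph_1$ copies of the same structure, which establishes $\aleph_1$-categoricity. The only real obstacle is the first-order axiomatization of the local type in Step 1, and this is routine once one notes that $T_n$ is finite and bounded-length labeled paths are first-order definable from the edge relations $E_{\sigma_1},\ldots,E_{\sigma_{|X|}}$.
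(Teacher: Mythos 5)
Your proof is correct and follows essentially the same route as the paper's: axiomatize the isomorphism type of the $n$-balls by first-order sentences, use the preceding K\"onig's-lemma lemma to show every connected component of a model is isomorphic to $\Gamma(G,X)$ (the paper phrases this as all components of two models being pairwise isomorphic, which is equivalent), and then count components to match them up. The only cosmetic difference is that you explicitly dispose of the finite case and omit the paper's auxiliary sentences $\Phi_{n,m}$, which are not needed for categoricity itself.
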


\begin{proof}
Fix any element $g$ of the Cayley graph $\Gamma(G,X)$. Consider the $n$-ball $B_n(g)$. Since
$\Gamma(G,X)$ is transitive $B_n(g')$ is isomorphic to $B_n(g)$ for all $g'\in G$. The theory $T(G,X)$ of the graph contains the following sentences:
\begin{enumerate}
\item  The sentence $\Phi_{n,m}$ stating that there are $m$ distinct elements $x$ such that $B_n(x)$ is isomorphic to $B_n(g)$, where $m,n \in \N$. Note that this is an infinite set of axioms.
\item The sentence $\Phi_n$ stating that for all $x$ the $n$-ball $B_n(x)$ around $x$
is isomorphic to $B_n(g)$, where $n\in \N$.
\end{enumerate}

The theory $T(G,X)$ described has a model which is the Cayley graph $\Gamma(G,X)$ since
the graph $\Gamma(G,X)$ satisfies all the sentences of the theory. Our goal is to show that any two models
$\A$ and $\B$ of this theory are isomorphic in case $\A$ and $\B$ have cardinality $\aleph_1$.

\smallskip

We note that each $\A$ and $\B$ is a labeled directed locally finite graph. As graphs they consist of strongly connected components. We refer to them simply as components. Note that each component in the graph $\A$,
and hence in $\B$, is  countable since the in-degree and out-degree of every element in $\A$ is
$|X|$. This implies that  both $\A$ and $\B$ are disjoint unions of their components, where  the cardinality of the union is $\aleph_1$.

\smallskip

Now, we show that any two components of $\A$ and $\B$ are isomorphic.
Indeed, take two elements $a\in \A$ and $b\in \B$, respectively. By the axioms of $T(G,X)$, for each $n\in \N$
there is an isomorphism from $B_n(a)$ to $B_n(b)$ that maps $a$ to $b$.  Apply the lemma
above to build an isomorphism from the component of $a$ onto the component of $b$. This shows that all components of  the graphs $\A$ and $\B$ are pairwise isomorphic. Therefore, we match the components of $\A$ with components of $\B$, and  build an isomorphism from $\A$ to $\B$. Thus, $T(G,X)$ is an $\aleph_1$-categorical theory.
\end{proof}

It is worth to note that the lemma stays true if we remove the labels from the edges of the Cayley graph $\Gamma(G,X)$. Namely, let $\Gamma_u(G,X)$ be the directed graph obtained from $\Gamma(G,X)$ be
removing the labels from all the edges. Then the theory of the unlabeled graph $\Gamma_u(G,X)$ is $\aleph_1$-categorical.

\smallskip

The lemma above allows us to address decidability of the word problem for the group $G$ in terms of decidability of the  theory $T(G,X)$:

\begin{theorem}\label{CayleyDecidable}
The word problem $W(G,X)$  in $G$ is decidable if and only if the theory $T(G,X)$ of the Cayley graph $\Gamma(G,X)$ is decidable.
\end{theorem}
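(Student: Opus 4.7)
The plan is to prove each implication separately. The direction ``$T(G,X)$ decidable implies $W(G,X)$ decidable'' reduces to a direct first-order translation of equality in $G$, while the converse is handled via the preceding $\aleph_1$-categoricity lemma together with the \L{}o\'s--Vaught test and the standard fact that a complete, recursively enumerable theory is decidable.

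For the direction ($\Leftarrow$): given two words $u=x_{i_1}\cdots x_{i_k}$ and $v=x_{j_1}\cdots x_{j_\ell}$ over $X$, I would form the first-order sentence $\phi_{u,v}$ in the signature of $\Gamma(G,X)$ asserting ``there exist vertices $p_0,p_1,\ldots,p_k,q_1,\ldots,q_\ell$ such that $E_{x_{i_s}}(p_{s-1},p_s)$ for $s=1,\ldots,k$, $E_{x_{j_t}}(q_{t-1},q_t)$ for $t=1,\ldots,\ell$ (setting $q_0:=p_0$), and $p_k=q_\ell$.'' Because $\Gamma(G,X)$ is vertex-transitive, $\Gamma(G,X)\models \phi_{u,v}$ iff the endpoints of the $u$- and $v$-paths from some (equivalently, every) vertex coincide, iff $\bar u=\bar v$ in $G$. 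The map $(u,v)\mapsto\phi_{u,v}$ is clearly computable, so a decision procedure for $T(G,X)$ yields one for $W(G,X)$.

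For the direction ($\Rightarrow$): the plan is to exhibit a recursive axiomatization $\Sigma$ of $T(G,X)$. Using the decision procedure for $W(G,X)$, one can, uniformly in $n$, compute an explicit presentation of the finite labeled rooted ball $B_n(g)$ around a fixed vertex $g\in G$: enumerate all words over $X$ of length at most $n$, quotient them by equality in $G$ using the word-problem oracle, and read off the edges. From this finite graph one writes a first-order sentence $\Phi_n$ saying ``for every $x$, the labeled rooted subgraph on the vertices at graph-distance at most $n$ from $x$ is isomorphic to $B_n(g)$,'' where distance-$\le n$ reachability is expressed by a finite disjunction of existentially quantified path formulas of length at most $n$. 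Adjoin the sentences ``there are at least $m$ distinct elements'' for every $m$. These are (up to trivial rephrasing) exactly the axioms $\Phi_n,\Phi_{n,m}$ used in the preceding lemma; hence every two models of $\Sigma$ of cardinality $\aleph_1$ are isomorphic, and $\Sigma$ has no finite models since $G$ is infinite. By the \L{}o\'s--Vaught test $\Sigma$ is complete, so $\Sigma$ axiomatizes $T(G,X)$. Since $\Sigma$ is recursive, $T(G,X)$ is complete and recursively enumerable, and therefore decidable.

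The step most deserving of care is the explicit construction of $\Phi_n$: verifying that bounded-distance reachability and the isomorphism type of the finite labeled rooted graph $B_n(g)$ can be uniformly encoded as a single first-order formula computable from $B_n(g)$. Once this routine encoding is in place, the rest of the argument is bookkeeping on top of the $\aleph_1$-categoricity lemma already established.
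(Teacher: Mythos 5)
Your proof is correct, and one half of it takes a genuinely different route from the paper. For the direction ``$W(G,X)$ decidable $\Rightarrow$ $T(G,X)$ decidable'' you follow the paper exactly: a recursive axiomatization by the sentences $\Phi_n$ (plus infinitude axioms), completeness via $\aleph_1$-categoricity and the \L{}o\'s--Vaught test, and the standard fact that a complete recursively axiomatizable theory is decidable; you actually add a detail the paper leaves implicit, namely that the word-problem oracle is what makes the axiomatization \emph{effective}, since one must compute the isomorphism type of $B_n(g)$ before one can write down $\Phi_n$. (Your replacement of the paper's $\Phi_{n,m}$ by plain cardinality axioms is harmless: modulo $\Phi_n$ the two axiom sets are equivalent, and the categoricity argument only uses that models are infinite, locally finite, and have all $n$-balls of the fixed type.) For the converse direction the paper instead invokes the Harrington--Khisamiev theorem that every countable model of a decidable $\aleph_1$-categorical theory is computable, concluding that $\Gamma(G,X)$ is a computable graph and hence, by the earlier lemma, that the word problem is decidable; you bypass this entirely by reducing each instance $(u,v)$ of the word problem to a single computable existential sentence $\phi_{u,v}$ asserting that the $u$-path and the $v$-path from a common vertex meet, which holds iff $\bar u=\bar v$ by uniqueness of labeled paths and left cancellation. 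Your reduction is more elementary and self-contained (it is essentially the ``exercise'' the paper relegates to a footnote), at the cost of not yielding the stronger structural conclusion that $\Gamma(G,X)$ itself admits a computable presentation, which is what the Harrington--Khisamiev route buys.
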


\begin{proof}
Assume that the word problem in $G$ is decidable. Our goal is to show that
the theory $T(G,X)$ is also decidable.
It is clear that $T(G,X)$ is effectively axiomtizable by the sentences $\Phi_{n,m}$ and $\Phi_n$ as follows from  the proof of the lemma
above. It is known that every $\aleph_1$-categorical theory $T$ without finite models is complete, that is,
for any sentence $\phi$ either $\phi$ belongs to $T$ or $\neg \phi$ belongs to $T$ \cite{Ho93}. From
the lemma above, we  conclude that $T(G,X)$ is a complete first order theory.
 Since $T(G,X)$ is complete, for every $\phi$ either $\phi$ or $\neg \phi$  is deducible from  the axioms  $\Phi_{n,m}$ and $\Phi_n$. This implies decidability of $T(G,X)$.

\smallskip

Assume that the theory $T(G,X)$  is decidable. Clearly,
$\Gamma(G,X)$ is a model of $T$.  Now we use the result of Harrington  (and independently Khisamiev) that states the following. If $T$ is $\aleph_1$-categorical decidable theory then all of its countable models are computable \cite{Ha74} \cite{Kh74}. We conclude that $\Gamma(G,X)$ is also a computable model of $T(G,X)$\footnote{In fact, one can effectively build the graph $\Gamma(G,X)$ without referencing Harrington and Khisamiev's theorems. The reader can construct $\Gamma(G,X)$ as an exercise.}. This proves the theorem.
\end{proof}

\section{Cayley graph automatic groups: definitions and examples}

In this section we introduce labeled automatic graphs and present several examples.   Let $\Gamma=(V,E)$ be a labeled directed graph. The labels of the graph are from a finite set
$\Sigma=\{\sigma_1, \ldots, \sigma_n\}$.

\begin{definition}
We view the graph $\Gamma$ as the following structure:
$$
(V, E_{\sigma_1}, \ldots, E_{\sigma_n}),
$$
where $E_{\sigma}=\{(x,y) \mid (x,y)\in E$ and the label of $(x,y)$ is $\sigma\}$ for $\sigma\in \Sigma$. We say that the graph $\Gamma$ is {\bf automatic} if the structure
$(V, E_{\sigma_1}, \ldots, E_{\sigma_n})$ is automatic.
\end{definition}

Here are examples of automatic graphs.
\begin{example}
Let $T$ be a Turing machine. The configuration space of $T$ is the graph $(Conf(T), E_T)$, where:
\begin{enumerate}
\item The set  $Conf(T)$ is the set of all configurations of $T$, and
\item The set $E_T$ of edges consists of all pairs $(c_1, c_2)$ of configurations such that
$T$ has an instruction that transforms $c_1$ to $c_2$.
\end{enumerate}
The structure $(Conf(T), E_T)$ is clearly an automatic directed graph since the transitions $(c_1, c_2) \in E_T$ can be detected by finite automata.
\end{example}

The next example shows the the $n$-dimensional grid is also an automatic graph.

\begin{example} \label{Ex:Z^n}
Consider $\Z^n$ as a labeled graph, where the labels are $e_1$, $\ldots$, $e_n$.
Identify each $e_i$ with the vector $(0,\dots, 0, 1, 0,\ldots, 0)$, whose all components are $0$ except at position $i$. For any two vectors $v$ and $w$ in $\Z^n$, put an edge from $v$ to $w$ and label it with $e_i$ if $v+e_i=w$. We represent each vector $v\in \Z^n$ as an $n$-tuple $(x_1, \ldots, x_n)$ of integers each written in a binary (or decimal) notation. Under this coding,
the edge relation
$$
E_i=\{(v,w) \mid v+e_i=w\}
$$
is FA recognizable. Hence, the labeled  graph $\Z^n$ is automatic.
\end{example}

\noindent

The next is a central definition of this paper that introduces automaticity for finitely generated groups.

\begin{definition} \label{Dfn:Central}
Let $G$ be a group generated by a finite set $X$ of generators. We say that $G$ is {\bf Cayley graph automatic} if the graph $\Gamma(G,X)$ is an automatic graph. We often refer to Cayley graph automatic groups as either {\bf graph automatic groups} or {\bf CGA groups}.
\end{definition}

\noindent
Here are several examples.

\begin{example} \label{Ex:Abelian}
Consider a  finitely generated abelian group $G$. The group $G$ can be written as $\Z^n \bigoplus A$, where $A$  is a finite abelian group and $n\in \N$. The group $G$ is generated by $A$ and
 the vectors $e_1$, $\ldots$, $e_n$ in $\Z^n$.  Using Example \ref{Ex:Z^n} and the fact that $A$ is finite, it is easy to show that the group $G$ is graph automatic.
 \end{example}

\begin{example} \label{Ex:Heisenberg}
The Heisenberg group  $\H_3(\Z)$ consists of $3\times 3$ matrices $X$ over $\Z$ of the following type:
$$
X=\left(
\begin{array}{ccc}
1 & a & b\\
0 & 1 & c\\
0 & 0 & 1
\end{array}
\right).
$$
The group has $3$ generators which are
$$
A=\left(
\begin{array}{ccc}
1 & 1 & 0\\
0 & 1 & 0\\
0 & 0 & 1
\end{array}
\right), \
B=\left(
\begin{array}{ccc}
1 & 0 & 1\\
0 & 1 & 0\\
0 & 0 & 1
\end{array}
\right),
 \  \mbox{and} \
 C=\left(
\begin{array}{ccc}
1 & 0 & 0\\
0 & 1 & 1\\
0 & 0 & 1
\end{array}
\right).
$$
We can represent the matrix $X$ as the convoluted word $\con (a,b,c)$, where $a$, $b$ and $c$ are written in binary. The multiplication of $X$ by each of these generators can easily be recognized by finite automaton. Indeed, the three automata that recognize the multiplication by $A$, $B$, and $C$ accept all the strings of the form
$\con (\con (a,b,c), \con (1+a, b,c))$,  $\con (\con (a,b,c), \con (a, 1+b,c))$, and
$\con (\con (a,b,c), \con (a, b,1+c))$,  respectively. Thus, $\H_3(\Z)$ is a graph automatic group.
\end{example}

\begin{example} \label{Ex:n-Heidelberg}
The example above can clearly be generalized to Heisenberg groups $\H_n(\Z)$ consisting of all $n\times n$ matrices over $\Z$ which have entries  $1$ at the diagonal and whose all other entries apart from first row or the last column are equal to $0$.
\end{example}

Now we mention some properties of graph automatic groups that follow directly from the definitions.
We start with the following easy lemma.

\begin{lemma} \label{L:Product}
Let $G$  be a graph automatic group over a generating set $X$. Then
for a given word $y \in (X \cup X^{-1})^\ast$ there exists a finite
automaton $\M_{y}$ which accepts all the pairs  $u, v\in \Gamma(G,X)$ with $v=uy$ and nothing else.
\end{lemma}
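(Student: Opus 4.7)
The plan is straightforward induction on $|y|$, leveraging the closure properties of FA recognizable relations guaranteed by the Definability and Decidability Theorem (Theorem~\ref{FDT}). Since $G$ is graph automatic, the structure $\Gamma(G,X) = (V, E_{x_1}, \ldots, E_{x_n})$ with $X = \{x_1, \ldots, x_n\}$ is automatic; in particular, for each $x \in X$ the relation $E_x = \{(u,v) : \bar v = \bar u x\}$ is FA recognizable on the chosen set $V$ of normal forms.

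First I handle the generators and their inverses, which gives the base case. For $x \in X$, the automaton $\M_x$ is provided directly by Definition~\ref{Dfn:Central}. For $x^{-1}$, observe that $\bar v = \bar u x^{-1}$ if and only if $\bar u = \bar v x$, so the relation $E_{x^{-1}}$ is obtained from $E_x$ by swapping the two components of each pair. FA recognizable binary relations are trivially closed under this swap (relabel the alphabet $(\Sigma_\diamond)^2$ by interchanging the two coordinates), so $E_{x^{-1}}$ is FA recognizable. The empty word $y = \lambda$ gives the diagonal $\{(u,u) : u \in V\}$, which is FA recognizable because $V$ is.

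For the inductive step, write $y = s_1 s_2 \cdots s_k$ with each $s_i \in X \cup X^{-1}$. Then
\[
\{(u,v) : \bar v = \bar u y\} \;=\; \bigl\{(u,v) : \exists w_1,\ldots,w_{k-1}\in V\ \textstyle\bigwedge_{i=1}^{k} E_{s_i}(w_{i-1}, w_i)\bigr\},
\]
where $w_0 := u$ and $w_k := v$. This is a first-order formula in the signature of the automatic structure $\Gamma(G,X)$, built from atomic relations already known to be FA recognizable by the previous step. By Theorem~\ref{FDT}(1), the resulting relation is FA recognizable and one can effectively produce the automaton $\M_y$ from the automata for the $E_{s_i}$.

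There is no real obstacle here; the content lies entirely in Theorem~\ref{FDT}. If one prefers an automata-theoretic proof without invoking that theorem, the same statement follows from the two explicit closure properties used in its proof: intersection (to combine the conjuncts $E_{s_i}(w_{i-1}, w_i)$ once they are rewritten as relations on the common tuple $(u, w_1, \ldots, w_{k-1}, v)$ by padding with trivial coordinates) and projection (to existentially quantify out the intermediate variables $w_1, \ldots, w_{k-1}$). The only minor care required is the convolution bookkeeping: because the normal forms $w_i$ may have widely varying lengths, one must use the padding symbol $\diamond$ in the convolution $\con(u, w_1, \ldots, w_{k-1}, v)$, which is exactly what Definition~\ref{dfn:n-tape} allows.
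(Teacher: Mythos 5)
Your proof is correct and follows essentially the same route as the paper: express the relation $v = uy$ by a first-order formula over the atomic edge relations $E_x$ of the automatic structure $\Gamma(G,X)$ and invoke Theorem~\ref{FDT}(1) to obtain the automaton $\M_y$. Your additional details (the swap for inverse generators, the diagonal for $y=\lambda$, and the convolution/padding bookkeeping) only make explicit what the paper leaves implicit.
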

\begin{proof}
Since $\Gamma(G,X)$ is automatic,  for every $x \in X$ there exists an automaton $\M_x$  such that for
all $u,v \in \Gamma(G,X)$, the automaton $\M_x$ detects if $v=u\cdot x$.
Now one  can use the automata $\M_x$, $x\in X$,  to build
a finite automaton $M_{y}$ that recognizes all $u,v  \in \Gamma(G,X)$ such that $v=uy$. This can be done through
Theorem \ref{FDT}. Indeed, there exists a formula $\phi(w,v)$ in the language of the Cayley graph  $\Gamma(G,X)$ with free variables $u,v$ such that $\phi(u,v)$ holds in $\Gamma(G,X)$  if and only if $v=uy$ in $G$. So the binary predicate defined by $\phi(u,v)$ is FA recognizable 
in $\Gamma(G,X)$. Hence we can build the desired automaton $\M_y$.
\end{proof}

The theorem below shows that the definition of graph automaticity is independent on the generator sets. The proof is much simpler than 
the proof of the similar results for standard automatic groups \cite{Thurston}. 

\begin{theorem}
If  $G$ is a graph automatic group with respect to a generating  set $X$ then $G$ is Cayley graph
automatic with respect to all finite generating  sets $Y$ of $G$.
\end{theorem}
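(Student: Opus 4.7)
The plan is to keep the same underlying automatic coding of group elements and only rebuild the edge relations. Let $V \subseteq \Sigma^\ast$ be the FA recognizable set of representatives in bijection with $G$ coming from the given graph automatic structure on $\Gamma(G,X)$, and let $\bar{u} \in G$ denote the element represented by $u \in V$. Changing the generating set from $X$ to $Y$ does not require altering $V$: the vertex set of $\Gamma(G,Y)$ is still $G$, and what needs to be verified is only that for each $y \in Y$ the relation
\[
E_y^{(Y)} \;=\; \{(u,v) \in V\times V : \bar{v} = \bar{u}\,y \text{ in } G\}
\]
is FA recognizable in the coding $V$.

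Since $X$ generates $G$, each $y \in Y$ can be written as some word $w_y \in (X \cup X^{-1})^\ast$, and this word satisfies $\overline{w_y} = y$ in $G$. Therefore $E_y^{(Y)}$ is identical, as a set of pairs in $V \times V$, to the relation $\{(u,v) : \bar{v} = \bar{u}\,\overline{w_y}\}$. I would now invoke Lemma \ref{L:Product} directly on the word $w_y$ to produce a finite automaton $\M_{w_y}$ recognizing precisely this relation. (Recall that the lemma is proved by noting that the relation is first-order definable in $\Gamma(G,X)$ from the generator-edge relations $E_x$, possibly using inverses, and then appealing to Theorem \ref{FDT}; no subtleties arise from allowing $X^{-1}$ because the inverse of an FA recognizable binary relation is FA recognizable.)

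Finally, I would assemble the pieces. The domain $V$ is FA recognizable by hypothesis, and the finitely many edge-label relations $E_y^{(Y)}$, one for each $y \in Y$, are FA recognizable by the previous step. This is precisely the data of an automatic presentation of the structure $(V; (E_y^{(Y)})_{y \in Y})$, which under the bijection $V \to G$ is isomorphic to $\Gamma(G,Y)$. Hence $G$ is graph automatic with respect to $Y$.

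There is no real obstacle here, and this is a reflection of why the graph automatic framework is more flexible than the classical one: no fellow traveler condition or Lipschitz comparison between languages of normal forms has to be re-established, because graph automaticity is purely a statement about FA recognizability of the Cayley graph as a relational structure, and invariance under change of generators reduces to the closure of FA recognizable relations under first-order definability in an automatic structure.
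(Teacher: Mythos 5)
Your proposal is correct and follows essentially the same route as the paper: keep the vertex language unchanged, express each $y \in Y$ as a word over $X \cup X^{-1}$, and apply Lemma \ref{L:Product} to conclude that each new edge relation is FA recognizable. The paper's proof is a condensed version of exactly this argument, including the closing remark that the automatic representation of the vertex set need not be altered.
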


\begin{proof}
Consider a graph automatic graph $\Gamma(G,X)$. Let $Y$ be any finite generating set for $G$. 
Each $y\in Y$ can be written as a product $x_1^{k_1} \cdot \ldots \cdot x_n^{k_n}$ of elements of
$X$. We write this product as $w(y)$. By Lemma \ref{L:Product} the binary relation $\{(u,v) \in \Gamma(G,X)^2\mid v = uy\}$ is FA recognizable in $\Gamma(G,X)$
 This proves that $\Gamma(G,Y)$ is an automatic graph. Note that we did not need to change the automatic
representation of the vertex set of the  graph $\Gamma(G,X)$ in our proof.
\end{proof}


\section{Automatic vs graph automatic}

In this section we recall the definition of automaticity first introduced by Thurston, and compare it with
our definition of graph automaticity. We recall the definition of Thurston \cite{Thurston}

\begin{definition}
A group $G$ with a finite generator set $X$ is {\bf automatic} if
\begin{enumerate}
\item There exists a regular subset $L\subseteq X^\star$ such that
the natural mapping $v\rightarrow \bar{v}$, $v\in L$, from $L$ into $G$  is onto.
\item The set $ W_G=\{(u,v) \mid u, v\in L \ \& \  \bar{u}=\bar{v}$  in $G \}$ is regular.
\item For each $x\in X$, there exists an automaton $M_x$ that recognizes the relation:
$$
\{(u,v) \mid u,v \in L \  \mbox{and} \ \bar{u}=\overline{vx} \ \mbox{in $G$} \}.
$$
\end{enumerate}
The automaton $M$ for $L$, and automata $M_x$ are called an {\bf automatic structure} for the group $G$.
\end{definition}

As mentioned in the introduction, automatic groups are generator set independent, have decidable word problem (in quadratic time, and they are finitely presented. They are also  closed under finite free products, finite direct products,
and  finite extensions.

\smallskip

Examples of automatic groups include free abelian groups $Z^n$, hyperbolic groups, e.g. free groups, braid groups, and fundamental groups of many natural manifolds. Examples of non-automatic groups are $SL_n(\Z)$ and $H_3(\Z)$, the wreath product of $\Z_2$ with $\Z$, non-finitely presented groups, and Baumslag-Solitar groups.

\smallskip

There is one geometric property of automatic groups known as fellow traveller property \cite{Thurston}. It is roughly explained as follows. \ Let $M$ and automata $M_x$, $x\in X$, be an {\em automatic structure} for the group $G$ generated by $X$. For any two words $w_1, w_2$ recognized by $M$, if
$M_x$ accepts $(w_1, w_2)$ then the following property holds.
Start traveling at the same speed along the paths $w_1$ and $w_2$ in the Cayley graph; At any given time $t$ during the travel, the distance between $w_1(t)$ and $w_2(t)$ is uniformly bounded by a constant $C$.

\smallskip

We now recast the definition of graph automaticity through the following lemma whose proof immediately follows from the definitions:

\begin{lemma}
Let $\Gamma(G,X)$ be the Cayley graph of a group $G$ generated by a finite set $X$ viewed as the  structure above:
$$
(V, E_{x_1}, \ldots, E_{x_n}).
$$
Then $G$ is graph automatic if and only if the following conditions hold for some finite alphabet $\Sigma$:
\begin{itemize}
\item There is an FA recognizable language  $R \subseteq \Sigma^\ast$ and an onto mapping $\nu:R \to V$ for which the binary predicate $E(x,y) \subseteq R^2$ defined by $E(u,v) \leftrightarrow \nu(u) = \nu(v)$ is FA recognizable,
\item All the predicates $E_{x_1}, \ldots, E_{x_n}$ are FA recognizable with respect to the mapping $\nu$, that is for each $x\in X$ the
set $\nu^{-1}(E_x)=\{(u,v) \mid u, v\in R \ \mbox{and} \  \nu(u) x =\nu(v)\}$ is FA recognizable.
\end{itemize}
\end{lemma}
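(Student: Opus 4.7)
The forward direction is immediate: if $\Gamma(G,X)$ is automatic in the sense of Definition \ref{CentralDefinition}, take $\Sigma$ to be the working alphabet, $R = V$, and $\nu = \mathrm{id}_V$. Then $E(u,v) \leftrightarrow u = v$ is just the diagonal on an FA recognizable set, which is FA recognizable, and the predicates $E_{x_i}$ are automatic by hypothesis.

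The content lies in the backward direction, and the plan is to pass to a canonical system of representatives. Suppose we are given $R$, $\nu$, and the FA recognizable equivalence relation $E$ on $R$ together with FA recognizable pullbacks $\nu^{-1}(E_{x_i})$. The idea is to pick from each $E$--class its length--lexicographically least element. Concretely, define
\[
V_0 \;=\; \{\, u \in R \mid \forall v\in R\;\bigl(E(u,v) \rightarrow u \leq_{llex} v\bigr)\,\}.
\]
Since $\leq_{llex}$ is FA recognizable (Example \ref{Ex:orders}) and so are $R$ and $E$, the set $V_0$ is first--order definable in an automatic structure, hence FA recognizable by Theorem \ref{FDT}. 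The restriction $\nu\!\upharpoonright\!V_0$ is then a bijection onto $V$.

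Next, for each generator $x_i$ define the edge relation on $V_0$ by
\[
E^{0}_{x_i} \;=\; \nu^{-1}(E_{x_i}) \,\cap\, (V_0 \times V_0).
\]
This is again the intersection of two FA recognizable relations, hence FA recognizable. Because $\nu\!\upharpoonright\!V_0$ is a bijection intertwining $E^{0}_{x_i}$ with $E_{x_i}$, the structure $(V_0, E^{0}_{x_1}, \ldots, E^{0}_{x_n})$ is isomorphic to $\Gamma(G,X)$ and automatic in the sense of Definition \ref{CentralDefinition}. Therefore $G$ is graph automatic.

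The only genuine step is the selection of $V_0$: one must verify that picking canonical representatives by a first--order formula involving $\leq_{llex}$, $E$, and $R$ keeps us inside the realm of FA recognizable sets, which is precisely the content of Theorem \ref{FDT}. All remaining verifications (closure under intersection, that $\nu\!\upharpoonright\!V_0$ is a well--defined bijection, and that the transported relations yield an isomorphic copy of the Cayley graph) are routine. No assumption about decidability of the word problem or about $R$ being a cross--section of $\nu$ is needed at the outset; the construction manufactures the cross--section automatically.
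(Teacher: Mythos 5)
Your argument is correct. Note, however, that the paper itself supplies no proof of this lemma: it asserts that the statement ``immediately follows from the definitions,'' presumably because the equivalence of injective and non-injective automatic presentations is folklore in the automatic-structures literature (it goes back to Khoussainov--Nerode and Blumensath--Gr\"adel). What you have done is to make that folklore step explicit, and your route is exactly the standard one: the forward direction is the trivial specialization $R=V$, $\nu=\mathrm{id}$, $E$ the diagonal (modulo identifying $\Gamma(G,X)$ with its automatic copy), and the backward direction selects from each $E$-class its $\leq_{llex}$-least representative, observes that this selection is first-order definable over the automatic structure $(R;E,\leq_{llex})$ and hence FA recognizable by Theorem~\ref{FDT}, and then restricts the pulled-back edge relations to the resulting regular cross-section $V_0$. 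All the supporting facts you invoke are available in the paper ($\leq_{llex}$ is FA recognizable by Example~\ref{Ex:orders} and is a well-order on $\Sigma^\ast$, so every nonempty $E$-class has a unique least element, making $\nu\!\upharpoonright\!V_0$ a bijection). The only thing I would add for completeness is the one-line check that $E$ is genuinely an equivalence relation on $R$ (it is, being the kernel of $\nu$) and that $E^{0}_{x_i}(u,v)\leftrightarrow E_{x_i}(\nu(u),\nu(v))$ for $u,v\in V_0$, which you correctly flag as routine. So: correct proof, filling a gap the paper leaves to the reader rather than diverging from an argument the paper actually gives.
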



Thus, the definition of graph automaticity  differs from the  definition Thurston automaticity
in only one respect.  Namely,  it is not requird that $X = \Sigma$.
This immediately implies the following simple result showing that all automatic groups are graph automatic.

\begin{proposition}
Every automatic group is graph  automatic. \qed
\end{proposition}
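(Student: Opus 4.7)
The plan is to observe that the definition of an automatic group is, essentially verbatim, the special case of the lemma's reformulation of graph automaticity in which one insists $\Sigma = X$ and $\nu$ is the canonical evaluation map $v \mapsto \bar v$. So the proof will consist of starting from the given automatic structure and checking the three bullet points of the lemma one by one.

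More concretely, let $G$ be automatic with finite generator set $X$, automatic language $L \subseteq X^\ast$, word-equality automaton for $W_G = \{(u,v) \in L^2 : \bar u = \bar v\}$, and multiplier automata $M_x$ for each $x \in X$. I would set $\Sigma = X$, $R = L$, and define $\nu : R \to G$ by $\nu(u) = \bar u$. Then $R$ is FA recognizable and $\nu$ is onto by clause (1) of the automatic-group definition; the predicate $E(u,v) \leftrightarrow \nu(u) = \nu(v)$ is precisely $W_G$, so it is FA recognizable by clause (2).

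For the edge predicates, fix $x \in X$ and note that
\[
\nu^{-1}(E_x) \;=\; \{(u,v) \in R^2 : \bar u\, x = \bar v\}.
\]
This is the coordinate-swap of the relation $\{(v,u) : \bar u = \overline{vx}\}$ recognized by the multiplier automaton $M_x$ from clause (3). Since FA recognizable relations are closed under permutation of their coordinates (the corresponding synchronous $n$-tape automaton simply permutes the components of its input symbols), $\nu^{-1}(E_x)$ is FA recognizable for each $x \in X$, which gives the second bullet of the lemma.

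Combining these observations, all the conditions of the lemma's reformulation hold with the same alphabet $\Sigma = X$ that witnesses standard automaticity, so $G$ is graph automatic. There is no real obstacle here; the only sublety worth flagging in writing is the direction of the multiplier relation, which is why I would explicitly invoke closure of regular relations under swapping coordinates rather than gloss over it.
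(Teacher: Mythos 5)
Your proposal is correct and follows exactly the route the paper intends: the paper states the proposition as an immediate consequence of its lemma recasting graph automaticity, observing that the only difference from Thurston automaticity is the requirement $X=\Sigma$, and you simply instantiate that lemma with $\Sigma = X$, $R = L$, $\nu(u)=\bar u$. Your explicit appeal to closure of regular relations under permuting coordinates, to reconcile $\bar u = \overline{vx}$ with $\nu(u)x = \nu(v)$, is a detail the paper leaves implicit but is handled correctly.
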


However, the converse is not true. For instance, the  Heisenberg group  $\H_3(\Z)$ is graph automatic (Example \ref{Ex:Heisenberg}), but not automatic (see \cite{Thurston}). Later we will give more examples of such groups.

\section{The word and conjugacy problems}

Recall that the complexity of the word problem in each automatic group is bounded by a quadratic polynomial. The theorem below shows that graph automatic groups enjoy the same property. They behave just like automatic groups in terms of complexity of  the word problem.

\begin{theorem}
The word problem in graph automatic groups is decidable in quadratic time.
\end{theorem}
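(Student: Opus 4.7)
The plan is to mimic the classical quadratic-time algorithm for automatic groups, but with representatives drawn from the FA-recognizable set $R$ of Cayley graph vertices rather than from $X^\ast$. Fix, once and for all, a specific element $r_0 \in R$ with $\nu(r_0) = 1_G$; such an $r_0$ exists because $\nu$ is onto, and we hard-code it as a constant in the algorithm. Given an input word $w = x_1 x_2 \cdots x_n$ with each $x_i \in X \cup X^{-1}$, the algorithm inductively produces $r_1, \ldots, r_n \in R$ satisfying $\nu(r_i) = \nu(r_{i-1}) \cdot x_i$ in $G$, by invoking the multiplication automaton $\M_{x_i}$ at each step. Finally, it decides $w = 1$ in $G$ by testing $\nu(r_n) = \nu(r_0)$ with the equality automaton, in time linear in $|r_n| + |r_0|$.

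The crux is a length bound: there is a constant $C$, depending only on the finitely many automata $\M_x$ for $x \in X \cup X^{-1}$, such that for every $u \in R$ and every generator $x$ one can find a $v \in R$ with $\nu(v) = \nu(u) x$ and $|v| \leq |u| + C$. To prove this, take any $v_0 \in R$ with $\nu(v_0) = \nu(u) x$ (one exists by surjectivity of $\nu$). If $|v_0| > |u| + |Q_x|$, then in an accepting run of $\M_x$ on $\con(u, v_0)$ the segment beyond position $|u|$ consists purely of $(\diamond, \sigma)$-transitions and is longer than $|Q_x|$; pigeonhole yields a repeated state inside this segment, and deleting the loop produces a strictly shorter $v' \in R$ with $\con(u, v')$ still accepted, hence $\nu(v') = \nu(u) x$. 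Iterating drives the length down to at most $|u| + |Q_x|$, and we set $C = \max_x |Q_x|$.

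With this bound, $|r_i| \leq |r_0| + iC = O(i)$. To compute $r_i$ from $r_{i-1}$ in time $O(|r_{i-1}|)$ we perform a subset-construction-style sweep of $\M_{x_i}$ with the first tape pinned to $r_{i-1}$, recording back-pointers that remember which second-coordinate symbol was chosen to reach each reachable state at each position; after position $|r_{i-1}|$, a short BFS of up to $C$ further $(\diamond, \sigma)$-transitions reaches an accept state, from which we back-trace to extract $r_i$. Each step costs $O(|r_{i-1}| \cdot |Q_{x_i}|^2 \cdot |\Sigma|) = O(i)$ since $|Q_{x_i}|$ and $|\Sigma|$ are constants, so summing over $i = 1, \ldots, n$ gives $O(n^2)$ in total; the final equality test contributes only $O(n)$.

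The principal obstacle is the length bound together with the extraction of a concrete $r_i$ of bounded length, rather than just a proof of its existence. The pigeonhole/shortcut argument handles existence, and the product-automaton BFS with back-pointers makes the extraction deterministic and linear-time. One subtlety worth verifying is that the shortcut preserves membership of $v'$ in $R$, but this is immediate: $\M_x$ recognizes the relation $\nu^{-1}(E_x) \subseteq R \times R$ itself, so every string whose convolution with $u$ is accepted necessarily lies in $R$.
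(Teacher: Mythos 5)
Your proof is correct and follows essentially the same route as the paper's: both establish a constant additive length bound for multiplication by a generator via a pigeonhole/pumping argument on the multiplication automata (the paper's Constant Growth Lemma and the Case 2 analysis in its linear-time evaluation lemma), then extract a representative of each prefix in linear time per step and sum to a quadratic total. Your write-up is merely more explicit about the back-pointer extraction and the final equality test when the coding map $\nu$ is not injective, but the underlying argument is the same.
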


\begin{proof}
Let $G$ be a group for which the Cayley graph $\Gamma(G,X)$ is automatic.
We prove the following   result which is interesting in its own as it can be applied in the general setting:

\begin{lemma}
Let $f: D^n\rightarrow D$ be a function whose graph is FA recognizable. There exists a linear time algorithm that given $x_1, \ldots, x_n\in D$ computes the value $f(x_1, \ldots, x_n)$.
\end{lemma}

To prove the lemma, lets us denote by $\M$ a finite automaton recognizing the graph of $f$. Consider the set $X$ of all paths (runs) labeled by words of the form $\c(x_1,\ldots, x_n, y)$, where $|y|\leq max \{|x_i| \mid 1\leq i\leq n\}$ starting from the initial state $q_0$. Let $S$ be the set of all states obtained by selecting the last states in the paths from $X$. The set $S$ can be computed in time $C \cdot  max\{|x_1|,\ldots, |x_n|, |y)|\}$, where $C$ is a constant.  There are two cases for $S$:

\smallskip

{\em Case 1}: The set $S$ contains an accepting state $s$. Hence there exists a path from the initial state to $s$ such that the label of the path is of the form $\c(x_1,\ldots, x_n, y')$ with $|y'|\leq max \{|x_i| \mid 1\leq i\leq n\}$. One can find such a path in linear time on size of the input $\c(x_1,\ldots, x_n)$. The string $y'$ must be such that $f(x_1,\ldots, x_n)=y'$.

\smallskip

{\em Case 2}. The set $S$ does not contain an accepting state. There must exists a state $s\in S$ and  a path from $s$ to an accepting state $s'$ such that the path is labelled by $( \diamond, \ldots, \diamond, y'')$ such that $|y''| \leq C'$, where $C'$ is the number of states in $M$. Let $y'$ be a string of length $\con (x_1,\ldots, x_n)$ such that there is a path from $q$ to $s$ labeled by $\c(x_1,\ldots, x_n, y')$. Then $y=y'y''$ is the output of the function $f$ on input $(x_1,\ldots, x_n)$.
Note that finding $s'$ and $y'$ takes a linear time on the size of the input $\c(x_1,\ldots, x_n)$.
This proves the lemma.

\medskip

Now we prove the theorem.  Let $w=\sigma_1\ldots \sigma_n$ be a reduced word (that is a word over $X$ that does not contain sub-words of the form $xx^{-1}$ with $x\in X$). We would like to find a representation $u$ of this word in the group $G$ in the automatic representation of $\Gamma(G,X)$. For each $w_i=\sigma_1\ldots \sigma_i$ we can find a string $u_i$ representing $w_i$ such that $|u_i|\leq C_1\cdot i$ for some constant $C_1$. By the lemma above $u_i$ can be found in time $C_2\cdot i$ for some constant $C_2$. Hence, the word $u=u_n$ representing $w$ can be found in time \ $C_2(1+2+\ldots +n)=O(n^2)$.
This proves the theorem.
\end{proof}

Below we introduce a notion of a {\em Cayley graph biautomatic group}.  Let $G$ be a group generated by a finite set $X$.
 Let
$\Gamma(G,X)$ be the Cayley graph of $G$ relative to $X$. Consider  the {\em left   Cayley} graph $\Gamma^l (G,X)$. It is a labelled directed graph with the vertex set $G$ such that there is  a directed edge $(g,h)$ from $g$ to $h$ labelled by $x$  if  and only if $x g = h$.  The graph $\Gamma^l (G,X)$ can be viewed as an algebraic structure $\Gamma^l(G,X) = (G; E^l_{x_1}, \ldots, E^l_{x_n})$, where a binary predicate $E^l_{x_i}$ defines the edges with the label $x_i$ in $\Gamma^l(G,X)$.

\begin{definition}
A group $G$ generated by a finite set $X$ is {\bf Cayley graph biautomatic} if the graphs $\Gamma(G,X)$ and $\Gamma^l(G,X)$ are automatic relative to one and the same regular set representing $G$.
 Equivalently,  $G$ is Cayley graph biautomatic if and only if the structure $(G; E_{\sigma_1}, \ldots, E_{\sigma_n}, E^l_{\sigma_1}, \ldots, E^l_{\sigma_n})$ is automatic. Similar as above we often refer to these groups as {\bf graph biautomatic groups}. 
\end{definition}

Recall that biautomatic groups (in the sense of Thurston) are defined in the following way. Let $G$ be automatic group with respect to $X$. Let $L\subseteq X^{\star}$ be a part of automatic structure for $G$. We say that
$G$ is biautomatic if $L^{-1}$ is a part of automatic structure for $G$.

\begin{proposition}
Every Thurston biautomatic group is Cayley graph biautomatic.
\end{proposition}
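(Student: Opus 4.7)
The plan is to take the regular language $L$ from the Thurston biautomatic structure as the representation of the vertex set in the sense of the reformulation of graph automaticity given just before this proposition, and then check that all of the required edge relations, both for $\Gamma(G,X)$ and for $\Gamma^l(G,X)$, are FA recognizable with respect to this one fixed representation.

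Specifically, I would set $\Sigma = X$, $R = L$, and let $\nu \colon L \to G$ be the natural evaluation map $u \mapsto \bar u$. By clause~(1) of the Thurston definition $\nu$ is onto $G$, and by clause~(2) the equality predicate $E(u,v) \Leftrightarrow \nu(u) = \nu(v)$ on $R^2$ is precisely the regular relation $W_G$. Hence $(R,\nu)$ is a legitimate automatic representation of the vertex set $V = G$ of $\Gamma(G,X)$.

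For the right Cayley edges, for each $x \in X$ the Thurston right-multiplier automaton $M_x$ recognizes $\{(u,v) \mid u,v \in L,\ \bar u = \overline{vx}\}$, and swapping the two components of the convolution (which preserves FA recognizability, since the convolution alphabet admits a coordinate swap) produces the relation $\{(a,b) \mid \nu(a)x = \nu(b)\}$, which is exactly $\nu^{-1}(E_x)$. By the biautomatic hypothesis the analogous left-multiplier automata exist with the same $L$; they recognize $\{(u,v) \mid u,v \in L,\ \bar u = \overline{xv}\}$, and the same swap yields $\{(a,b) \mid x\,\nu(a) = \nu(b)\} = \nu^{-1}(E^l_x)$. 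Therefore the combined structure $(V; E_{x_1},\ldots,E_{x_n},E^l_{x_1},\ldots,E^l_{x_n})$ is automatic relative to the single regular representation $R = L$, which is exactly Cayley graph biautomaticity.

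There is really no hard step here: the argument is essentially that Thurston biautomaticity is nothing but the special case of Cayley graph biautomaticity in which the graph-representation alphabet $\Sigma$ is required to coincide with the generating set $X$. The only point requiring mild care is to interpret the excerpt's shorthand ``$L^{-1}$ is a part of an automatic structure for $G$'' as the usable statement that the same $L$ supports left-multiplier automata as well as right-multiplier ones; once that is unpacked, the proof parallels verbatim the earlier proposition that every Thurston automatic group is graph automatic, with an additional line for the left-edge automata.
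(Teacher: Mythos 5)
Your overall framing is right: take the single regular language $L$ as the representation of the vertex set, note that clauses (1)--(3) of the Thurston definition give the domain, the equality relation, and the right-edge relations $E_x$, and then supply the left-edge relations $E^l_x$. The coordinate-swap remark for converting $\{(u,v)\mid \bar u=\overline{vx}\}$ into $\nu^{-1}(E_x)$ is fine. But the one step that carries the actual content of the proposition is exactly the step you wave through. The paper's definition of biautomaticity is that $L^{-1}$ is part of an automatic structure for $G$; it does \emph{not} say that $L$ itself supports left-multiplier automata. You write that the left-multiplier automata over $L$ exist ``by the biautomatic hypothesis'' and that the hypothesis merely needs to be ``unpacked,'' but you never unpack it --- and that unpacking is the whole proof.

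Concretely, what the hypothesis gives you is, for each $x\in X$, an automaton $M_{x^{-1}}$ recognizing the pairs $(u^{-1},v^{-1})$ with $u^{-1}x^{-1}=v^{-1}$, i.e.\ reading the \emph{inverted} normal forms (reversed strings with letters replaced by their inverses). The paper converts this into an automaton for $E^l_x$ over $L$ by observing that $u^{-1}x^{-1}=v^{-1}$ iff $xu=v$, and then \emph{reversing} the automaton $M_{x^{-1}}$: interchange initial and final states, reverse every transition, and replace each label by its inverse, so that accepted paths labelled $(u^{-1},v^{-1})$ become accepted paths labelled $(u,v)$ with $v=xu$. (Even this requires a small further remark about padding symbols migrating from the tail to the head of the convolution under reversal, which closure of synchronized rational relations under reversal handles.) Without this construction, or something equivalent, your argument reduces to restating the conclusion under a different definition of biautomaticity than the one the paper uses. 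If you silently adopt the alternative characterization ``same $L$ carries both left and right multiplier automata,'' the proposition becomes a tautology, and you have proved nothing about the definition actually in force.
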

\begin{proof}
Let $G$ be a Thurston biautomatic group with a finite generating set $X$. Suppose $R \subseteq X^\ast$  is a regular set such that $G$ is automatic relative to $R$ and $R^{-1}$. It follows that the Cayley graph $\Gamma(G,X)$ is automatic, so all the binary relations $E_{x_i}$ are FA recognizable. We need to show that the relations $E^l_{\sigma_i}$ are also FA recognizable. Since $G$ is biautomatic the set of pairs $(u,v) \in R^2$ such that $u^{-1}x^{-1} = v^{-1}$  for a given $x \in X$ is FA recognizable, say by an automaton $M_{x^{-1}}$. Observe that  $u^{-1}x^{-1} = v^{-1}$ if and only if $x u = v$.  Rebuild the automaton $M_{x^{-1}}$ into an automaton $M^l_x$ by  interchanging the sets of initial and final states in $M_x$, then  reversing each edge in $M_x$ and changing each label $x$ into $x^{-1}$. Clearly, $M_{x^{-1}}$ accepts a path with label $(u^{-1},v^{-1}) \in R^2$ if and only if $M^l_x$ accepts a path labelled $(u,v)$ (in which case  $v = x u$). Hence $M^l_x$ recognizes $E^l_{x}$. This proves the proposition.
\end{proof}

\begin{theorem}
The Conjugacy Problem in every graph biautomatic group $G$ is decidable.
\end{theorem}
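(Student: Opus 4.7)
The plan is to reduce the conjugacy problem to an emptiness check for an FA recognizable binary relation on the common normal form set of the biautomatic structure. Let $R \subseteq \Sigma^\ast$ be the regular set with respect to which both $\Gamma(G,X)$ and $\Gamma^l(G,X)$ are automatic, and write $\bar{u}$ for the group element represented by $u \in R$. The input is a pair of words $g, h \in (X\cup X^{-1})^\ast$, and we must decide whether $\bar{g}$ and $\bar{h}$ are conjugate in $G$, equivalently whether there exists $x \in G$ with $\bar{g}\cdot x = x\cdot \bar{h}$. Passing to normal forms, this is the question: do there exist $u, v \in R$ such that $\bar{v} = \bar{g}\cdot \bar{u}$ and simultaneously $\bar{v} = \bar{u}\cdot \bar{h}$?

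My first step is to produce the two key automata. Lemma \ref{L:Product}, applied to the (right) Cayley graph automatic structure $\Gamma(G,X)$, yields an FA recognizing the binary relation $\rho_h = \{(u,v) \in R\times R : \bar{v} = \bar{u}\cdot\bar{h}\}$. Biautomaticity of $G$ tells us that the expanded structure containing the left-edge predicates $E^l_{\sigma_i}$ is also automatic on the same $R$; repeating the FO-definability argument used in Lemma \ref{L:Product} inside this expanded structure (invoking Theorem \ref{FDT}) produces an FA recognizing $\lambda_g = \{(u,v) \in R\times R : \bar{v} = \bar{g}\cdot\bar{u}\}$. Both constructions are effective from the input words $g, h$.

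To finish, observe that $\bar{g}$ and $\bar{h}$ are conjugate iff $\lambda_g \cap \rho_h \neq \emptyset$: a witness $x$ yields a pair $(u,v)$ with $\bar{u} = x$ and $v$ any normal form of the common value $\bar{g}\bar{u} = \bar{u}\bar{h}$, and conversely any $(u,v) \in \lambda_g \cap \rho_h$ gives $x = \bar{u}$ with $\bar{g}\,x = x\,\bar{h}$. Since the class of FA recognizable relations over a fixed alphabet is closed under intersection and emptiness of the language of a finite automaton is decidable, we intersect the two automata and run the emptiness test. I expect the main conceptual point --- and the reason this proof is much shorter than the classical Thurston case --- is that graph biautomaticity furnishes right and left multiplication automata referring to the \emph{same} normal form set $R$, so no fellow-traveller argument is required to match up different representatives of $\bar{g}\bar{u}$ and $\bar{u}\bar{h}$; the only genuine obstacle is checking that Lemma \ref{L:Product} really does lift, uniformly, to left multiplication inside the biautomatic structure, and this follows immediately from first-order definability together with Theorem \ref{FDT}.
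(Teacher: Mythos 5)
Your proposal is correct and follows essentially the same route as the paper: Lemma \ref{L:Product} applied to both $\Gamma(G,X)$ and $\Gamma^l(G,X)$ gives FA recognizable relations $\{(u,v): \bar v=\bar u\bar h\}$ and $\{(u,v): \bar v=\bar g\bar u\}$ on the common set $R$, and conjugacy reduces to a decidable emptiness test. The paper phrases the final step as the nonemptiness of the definable set $S_{p,q}=\{u\in R \mid up=qu\}$ given by $\exists z\,((up=z)\wedge(qu=z))$, which is just the projection of your intersection $\lambda_g\cap\rho_h$, so the two arguments coincide.
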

\begin{proof}
Let $G$ be a graph biautomatic group generated by a finite set $X$.
Let $\Gamma(G,X)$ be a graph biautomatic representation of $G$,
with the regular set $R$ representing the domain.
Cayley graphs $\Gamma(G,X)$ and $\Gamma^l(G,X)$ are automatic.
Fix two words $p$ and $q$ in $X^\ast$. By Lemma \ref{L:Product}, applied to the automatic graphs $\Gamma(G,X)$ and $\Gamma^l(G,X)$, one has that the sets of pairs $\{(u,up) \mid u \in R\}$ and $\{(u,qu) \mid u \in R\}$ are FA recognizable. Hence, the set
$$S_{p,q} = \{ u \in R \mid up = qu \ in \ G\}$$
 is FA recognizable. Indeed, the formula
 $$
 \Phi(u)= \exists z ((up = z) \wedge (qu = z))
 $$
 defines the set $S_{p,q}$ in the automatic structure $(G; E_{\sigma_1}, \ldots, E_{\sigma_n}, E^l_{\sigma_1}, \ldots, E^l_{\sigma_n})$.  It follows, that  $p$ and $q$ are conjugate in $G$ if and only if $S_{p,q} \neq \emptyset$, which is decidable.
\end{proof}

Just as for automatic groups we do not, however, know if the Conjgacy Problem for graph automatic groups is decidable.

\section{Universal Cayley graphs}
\label{se:universal}

In this section we prove that the Cayley graph of a free group with two natural extra predicates is universal. Recall that an automatic structure $\A$ is {\bf universal} if every other automatic structure $\B$ can be interpreted in $\A$
as defined in Definition \ref{Dfn:Interp}.
\smallskip

Let $F$ be a free group with basis $A = \{a_1, \ldots,a_n\}$. We represent $F$ by the set $F(A)$ of all reduced words in $A \cup A^{-1}$. Recall that a word is reduced if it contains no subword of the form $aa^{-1}$, $a\in A$. On the set $F(A)$ define the following two predicates $\preceq$ and $el$:
$$
x\preceq y \leftrightarrow x \ \mbox{is  a  prefix of $y$, and}
$$
$$
el(x,y) \leftrightarrow |x| = |y|.
$$
Denote by $\Gamma_{free}(A)$ the Cayley graph $\Gamma(F,A)$ with two extra predicates $\preceq$ and $el$, i.e.,
$$
\Gamma_{free}(A) = (F(A); E_{a_1}, \ldots, E_{a_n}, \preceq, el).
$$
Now we prove the following theorem:

\begin{theorem}
The automatic structure  $\Gamma_{free}(A)$ is universal.
\end{theorem}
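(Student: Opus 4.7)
The plan is to reduce to Theorem~\ref{th:universal}, which asserts that $\mathcal{M} = (\Sigma^\ast; R_a, \preceq, el)_{a \in \Sigma}$ is a universal automatic structure. Since first-order interpretations compose, it suffices to exhibit a first-order interpretation of $\mathcal{M}$ (with $\Sigma = A$) inside $\Gamma_{free}(A)$. Any automatic structure, being interpretable in $\mathcal{M}$, will then be interpretable in $\Gamma_{free}(A)$, proving universality.

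The core task is to carve out the positive submonoid $A^\ast \subseteq F(A)$ as a first-order definable subset using only the available predicates $E_{a_1}, \ldots, E_{a_n}, \preceq, el$. I would proceed in two steps, writing $\prec$ for the strict part of $\preceq$. First, define a length-successor predicate
$$
S(y,z) \;\equiv\; \exists w\, \bigl( w \prec z \,\wedge\, \neg \exists u\,(w \prec u \prec z) \,\wedge\, el(w,y) \bigr),
$$
which asserts $|z|=|y|+1$: the middle conjunct forces $w$ to be $z$ with its last letter deleted, so $|w|=|z|-1$, and then $el(w,y)$ yields $|y|=|z|-1$. Second, define the positive cone by
$$
P(x) \;\equiv\; \forall y\, \forall z\, \Bigl( y \preceq x \,\wedge\, z \preceq x \,\wedge\, S(y,z) \,\to\, \bigvee_{j=1}^n E_{a_j}(y,z) \Bigr).
$$
If $y$ and $z$ are both prefixes of a reduced word $x$ and $|z|=|y|+1$, then $y \prec z$ and $z$ is obtained from $y$ by appending a single letter of $A \cup A^{-1}$; the disjunction fires precisely when that letter is positive. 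Hence $P(x)$ holds iff the reduced word $x$ contains no inverse letter, i.e., $x \in A^\ast$.

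With $P$ in hand the interpretation is transparent: take the domain formula to be $P(x)$, interpret $R_{a_i}(x,y)$ by $P(x) \wedge E_{a_i}(x,y)$ (for positive $x$, the free-group product $x\cdot a_i$ coincides with monoid concatenation and is automatically positive, so $P(y)$ is forced), and interpret $\preceq$ and $el$ by their homonyms in $\Gamma_{free}(A)$ relativized to $P$. Each defining formula is first-order, and the relativized structure is plainly isomorphic to $(A^\ast; R_a, \preceq, el)_{a \in A} \cong \mathcal{M}$.

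The principal obstacle is verifying that $P$ genuinely isolates the positive cone. The subtlety is that $E_{a_j}(y,z)$ in $\Gamma(F,A)$ fires in two distinct situations: either $y$ does not end in $a_j^{-1}$ and $z = y a_j$ as reduced words (so $|z|=|y|+1$ and $y \prec z$), or $y$ ends in $a_j^{-1}$ and $z$ is obtained by cancellation (so $|z|=|y|-1$ and $y \not\preceq z$). The twin constraints $y \preceq z$ (built into the prefix chain along $x$) and $S(y,z)$ rule out the cancelling case and isolate genuine right-multiplications by positive generators. Once this case analysis is confirmed, the remainder is a routine composition of interpretations.
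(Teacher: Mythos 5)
Your proof is correct and follows essentially the same route as the paper: reduce to the universality of $\mathcal{M}$ from Theorem \ref{th:universal}, observe that the predicates of $\mathcal{M}$ are restrictions of those of $\Gamma_{free}(A)$, and show that $A^\ast$ is first-order definable there. The only difference is cosmetic --- the paper defines $A^\ast$ by requiring that right-multiplication by each positive generator strictly increases the length of every prefix, whereas you walk the prefix chain of $x$ via a length-successor predicate; both formulas isolate the same positive cone.
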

\begin{proof}
It is easy to see that $\Gamma_{free}$ is an automatic structure.
Indeed, the set  $F(A)$ andall the predicates defined are clearly FA recognizable.

Consider the structure
$$
{\mathcal M} = (A^\ast; R_a(x,y), x\preceq y, el(x,y))_{ a \in \Sigma},
$$
defined in Example  \ref{ex:free-monoid}.  By Theorem \ref{th:universal} $\M$ is a universal automatic structure. Since interpretability is a transitive relation,   it suffices to interpret the structure $\M$ in the expanded free group $\Gamma_{free}(A)$ by first-order formulas.
Notice, that the set $A^\ast$ is a subset of $F(A)$, consisting of all words without "negative" letters $a^{-1}$ when $a \in A$. Furthermore, all the predicates in $\M$ are restrictions of the corresponding predicates from $\Gamma_{free}(A)$ onto $A^\ast$. Hence, it suffices to show that the subset $A^\ast$ is definable in $\Gamma_{free}(A)$. Observe, first, that the formula
$$
\Phi_{<}(u,v)  = \exists z (z \preceq v \wedge z \neq v \wedge |z| = |u|)
$$
defines the binary relation $|u| < |v|$ in $\Gamma_{free}(A)$. Now it is easy to see that the formula
$$
\Phi(w) = \forall u \forall v (u \preceq w \wedge (\bigvee_{a \in A} R_a(u,v)) \to |u| < |v|)
$$
defines $A^\ast$ in $\Gamma_{free}(A)$. This proves the theorem.
\end{proof}

\section{Cayley graph automatic groups: constructions}

Our goal is to show that  graph automaticity is preserved under several natural group-theoretic constructions.

\subsection{Finite extensions}

Let $G$ be a group and $H$ be a normal subgroup $G$. We say that $G$ is a {\em finite extension of $H$} if the quotient group $G/H$ is finite. It turns out that graph automaticity preserves finite extensions:

\begin{theorem}\label{Thm:FE}
Finite extensions of graph automatic groups are again graph automatic.
\end{theorem}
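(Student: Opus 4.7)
The plan is to lift a graph automatic representation of $H$ to one of $G$ by tagging the $H$-representative with a finite "coset marker." Fix a transversal $g_1 = e, g_2, \ldots, g_k$ for $H$ in $G$, so every $g \in G$ has a unique decomposition $g = h \cdot g_i$ with $h \in H$. Since graph automaticity is generator-invariant, take as a generating set $Y = X \cup \{g_2, \ldots, g_k\}$, where $X$ is a generating set for $H$ witnessing that $H$ is graph automatic via some $(R, \nu)$ with $R \subseteq \Sigma^\ast$.

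For the representation of $G$, enlarge the alphabet to $\Sigma' = \Sigma \cup \{c_1, \ldots, c_k\}$, where the $c_i$ are fresh symbols, and set $R' = R \cdot \{c_1, \ldots, c_k\}$. Define $\nu' : R' \to G$ by $\nu'(w c_i) = \nu(w) g_i$. The map $\nu'$ is onto, and the equality predicate $\nu'(u) = \nu'(v)$ is FA recognizable: an automaton first checks that the markers $c_i, c_j$ at the ends of the two words agree, then runs the FA recognizer for $\nu(w) = \nu(w')$ on the $\Sigma$-prefixes. So the "domain" portion of the definition of graph automaticity is in place.

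The core step is to produce, for each generator $y \in Y$, an automaton recognizing right multiplication by $y$ on $R'$. For each pair $(i, y)$, perform once and for all the finite computation $g_i y = h_{i,y} \cdot g_{\sigma(i,y)}$, where $h_{i,y} \in H$ and $\sigma(i,y) \in \{1, \ldots, k\}$ are uniquely determined; fix for each $h_{i,y}$ a word $u_{i,y} \in X^\ast$ representing it. Then
\[
(h \cdot g_i) \cdot y = (h \cdot h_{i,y}) \cdot g_{\sigma(i,y)},
\]
so right multiplication by $y$ on $R'$ decomposes into finitely many cases indexed by $i$. For each case, by Lemma \ref{L:Product} applied inside the graph automatic group $H$, the binary relation $\{(w, w') \in R^2 \mid \nu(w') = \nu(w) \cdot u_{i,y}\}$ is FA recognizable; prepending a check that the input marker is $c_i$ and the output marker is $c_{\sigma(i,y)}$ gives an automaton for each case, and the finite union over $i$ produces the desired automaton for right multiplication by $y$ on $R'$.

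The main "obstacle" is really just bookkeeping: one has to make sure that the decomposition $g = h \cdot g_i$ is consistent with right multiplication (which is why we absorbed the coset factor $g_i y$ into the $H$-part and a new transversal element), and that the equality relation remains FA recognizable despite $\nu'$ possibly identifying distinct representatives (which it does not, provided $\nu$ is injective on $R$; otherwise one uses the FA recognizable equality on $R$). No deeper issue arises, so the construction yields a graph automatic presentation of $G$.
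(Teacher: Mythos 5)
Your proof is correct and follows essentially the same route as the paper's: both represent an element $g=hg_i$ as the automatic word for the $H$-part followed by a fresh coset-marker symbol, precompute the finite coset arithmetic, and reduce right multiplication by a generator of $G$ to right multiplication in $H$ by a fixed word, which is FA recognizable by Lemma \ref{L:Product}. Your write-up is in fact a little cleaner: by choosing the generating set $X\cup\{g_2,\ldots,g_k\}$ and using only the right-coset identity $g_iy=h_{i,y}g_{\sigma(i,y)}$, you avoid the conjugation relations $k_ih_j=\cdots k_i$ that the paper derives from normality of $H$, so your argument works verbatim for any finite-index subgroup.
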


\begin{proof}

Let $H$ be a graph automatic group. Let $H$ be a normal subgroup of a group $G$ such that $G/H$ is finite.  Let
$$
G/H=\{Hk_0, \ldots, Hk_{r-1}\}
$$
be all right co-sets of $G$ with respect to $H$.
There exists a finite function $g$ such that for all $i,s \leq r-1$, we have an equality:
\begin{equation}\label{eq:cosets}
 Hk_i \cdot H k_s=H k_{g(i,s)}.
\end{equation}
Let $h_0, \ldots, h_{n-1}$ be a finite number of generators of $H$ that also include the identity of the group. The equality  (\ref{eq:cosets}) above implies that there are sequences
$g_1(i,s)$, $\ldots$, $g_x(i,s)$  and $u_1(i,s)$, $\ldots$, $u_x(i,s)$ of integers  such that
we have
$$
k_ik_s=h_{u_1(i,s)}^{g_1(i,s)}, \ldots h_{u_x(i,s)}^{g_x(i,s)}k_{g(i,s)},
$$
where $i,s\leq r-1$ and all $u_1(i,s)$, $\ldots$, $u_x(i,s)$ are non-negative integers all less than or equal to $n-1$. Similarly, there are sequences  $f_1(i,j)$, $\ldots$, $f_m(i,j) $ and $v_1(i,j)$,$\ldots$, $v_m(i,j)$ of integers  such that for all $i\leq r-1$ and $j\leq n-1$ we have the following equalities:
$$
k_i h_j  = h_{v_1(i,j)}^{f_1(i,j)} \cdot \ldots \cdot h_{v_m(i,j)}^{f_m(i,j)}k_i.
$$
This implies that for all all $s,i\leq r-1$, $j\leq n-1$, and $h\in H$ we have the following equalities:
$$
h k_i \ h_j k_s  =h  \ h_{v_1(i,j)}^{f_1(i,j)} \cdot \ldots \cdot h_{v_m(i,j)}^{f_m(i,j)}k_i k_s=h \   h_{v_1(i,j)}^{f_1(i,j)} \cdot \ldots \cdot h_{v_m(i,j)}^{f_m(i,j)} \ h_{u_1(i,s)}^{g_1(i,s)} \ldots h_{u_x(i,s)}^{g_x(i,s)} \ k_{g(i,s)}.
$$
Let $\bar{h}$ be the word representing the element $h\in H$ under a graph automatic presentation of $H$. We represent elements $hk$ of the group $G$ as words $\bar{h}k$. Here we need to assume that the alphabet of the presentation for $H$ does not contain symbols $k_0$, $\ldots$, $k_{r-1}$. The equalities above tell us that there are finite automata $M_{i,j}$ that for every
$k_i, h_j$ accept all pairs of words of the form $(\bar{h}k, w)$ such that the equality $w=h k k_ih_j $ is true in the group $G$. Note that to build the automata $M_{i,j}$ one needs to use: the original automata that represent the group  $H$,  the sequences $g_1(i,s)$, $\ldots$, $g_x(i,s)$ and  $u_1(i,s)$, $\ldots$, $u_x(i,s)$,  the sequences
 $f_1(i,j)$, $\ldots$, $f_m(i,j)$ and $v_1(i,j),\ldots, v_m(i,j)$,   the function $g$, and the  automata  representing the multiplication by elements $h_v^{f(i,j)}$ and $h_u^{g(i,j)}$ in  the group $H$. This shows that the group $G$ is graph automatic.
The theorem is proved.  
\end{proof}

A simple corollary of the proof is the following:

\begin{corollary}\label{Cor:FE}
Finite extensions of graph biautomatic groups are again graph biautomatic.
\end{corollary}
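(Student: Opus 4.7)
The plan is to adapt the construction in the proof of Theorem~\ref{Thm:FE} to the biautomatic setting, using the same regular representation $\bar{h}k_s$ for elements of $G$. By Theorem~\ref{Thm:FE}, every right-multiplication relation $E_y$ in $G$ is FA recognizable in this representation, so I only need to verify that, in addition, every left-multiplication relation $E^l_y$ is FA recognizable. It suffices to handle left multiplications by the two natural families of generators of $G$: the generators $h_j$ of $H$, and the coset representatives $k_i$.

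For left multiplication by a generator $h_j \in H$, the map $\bar{h}k_s \mapsto \overline{h_j h}\, k_s$ changes only the $H$-part via left multiplication by $h_j$ in $H$. Since $H$ is graph biautomatic by hypothesis, its biautomatic presentation supplies a finite automaton recognizing this relation on $H$, and this automaton lifts immediately to one recognizing $E^l_{h_j}$ on $G$ (the tag $k_s$ is untouched). For left multiplication by a coset representative $k_i$, I would use the commutation $k_i h = \varphi_i(h)\, k_i$ with $\varphi_i(h) := k_i h k_i^{-1} \in H$ (well-defined since $H \trianglelefteq G$), together with the factorization $k_i k_s = c_{i,s}\, k_{g(i,s)}$ where $c_{i,s} \in H$ is a fixed element. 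This gives
\[
k_i \cdot \bar{h}k_s \;=\; \overline{\varphi_i(h)\, c_{i,s}}\, k_{g(i,s)}.
\]
Since right multiplication by the fixed element $c_{i,s}$ in $H$ is FA recognizable by Lemma~\ref{L:Product}, the construction of $E^l_{k_i}$ reduces to exhibiting a finite automaton for the conjugation map $\bar{h} \mapsto \overline{\varphi_i(h)}$ on $H$.

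The main obstacle I expect is this last step: producing an automaton for the automorphism $\varphi_i$ of $H$, which is conjugation by the element $k_i \in G\setminus H$. The idea is to combine the biautomatic presentation of $H$ (from which Theorem~\ref{FDT} gives that left and right multiplications by any fixed element of $H$, and hence conjugation by any fixed element of $H$, are FA recognizable) with the graph automatic structure of $G$ obtained in Theorem~\ref{Thm:FE} (in which right multiplications by the fixed elements $k_i$ and $k_i^{-1}$ are FA recognizable by Lemma~\ref{L:Product}). Since $\varphi_i$ is determined by its values $\varphi_i(h_j) = p_{i,j} \in H$ on the finite set of generators of $H$, one would then verify that the graph of $\varphi_i$ is first-order definable in a suitable automatic expansion combining these two structures, yielding the required automaton. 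Once automata for every $E^l_y$ are in hand, graph biautomaticity of $G$ follows directly from the definition, with the same regular representation used in Theorem~\ref{Thm:FE}.
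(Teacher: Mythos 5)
Your reduction is the right one, and up to the last step it matches what the paper's one-line justification (``a simple corollary of the proof'' of Theorem~\ref{Thm:FE}) must intend: keep the representation $\bar{h}k_s$, observe that the right-multiplication relations are already handled by Theorem~\ref{Thm:FE}, that left multiplication by a generator $h_j$ of $H$ only left-multiplies the $H$-part by a fixed element (covered by biautomaticity of $H$), and that left multiplication by $k_i$ unwinds via $k_i\cdot hk_s=\varphi_i(h)\,c_{i,s}\,k_{g(i,s)}$ to a conjugation of the \emph{arbitrary} element $h$ by $k_i$ followed by right multiplication by a fixed element. All of that is correct.

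The genuine gap is exactly the step you flag and then do not close: FA recognizability of the map $\bar{h}\mapsto\overline{\varphi_i(h)}$ on the given regular representation of $H$. Graph biautomaticity of $H$ gives you, via Lemma~\ref{L:Product} and Theorem~\ref{FDT}, left and right multiplication --- hence conjugation --- by any fixed element \emph{of $H$}; but $\varphi_i$ is conjugation by $k_i\in G\setminus H$, and nothing in the hypotheses makes it FA recognizable. Your proposed repair is circular: the only first-order description of the graph of $\varphi_i$ available in the ``combined'' structure is $v k_i = k_i u$, and the conjunct $z=k_iu$ is precisely the left-multiplication relation $E^l_{k_i}$ on $G$ that you are trying to construct; the right-multiplication automata for $k_i$ and $k_i^{-1}$ do not substitute for it. Likewise, knowing the finitely many values $\varphi_i(h_j)$ determines $\varphi_i$ as an abstract automorphism but yields no automaton for it on the string representation (an automorphism of an automatic group specified on generators need not be a regular relation; this is the same phenomenon that makes the hypothesis ``$\tau(y)$ is automatic'' necessary in Theorem~\ref{Thm:Semidirect}). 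Switching to the left-coset representation $k_i\bar{h}$ only moves the same difficulty to the right-multiplication side. To be fair, the paper offers no argument for this corollary at all, and any complete proof would have to address the same point; but as written your proposal does not establish the statement.
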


\subsection{Semidirect products} \label{se:semidirect}

Let $A$ and $B$ be finitely generated groups and $\tau:B \to Aut(A)$ an injective homomorphism.
As usual the {\em semidirect product} of $A$ and $B$ relative to $\tau$, denoted $A \rtimes_\tau B$, is a  group $G$ generated by $A$ and $B$ such that $A$ is normal in $G$. In the semidirect product, every element $g \in G$ is uniquely presented as a product $g = ba$, where $a \in A, b \in B$. The
multiplication in $G$ is given by
$(ba)(b_1a_1) = bb_1a^{b_1}a_1$, where $a^{b_1} = \tau(b_1)(a)$.

\smallskip

Recall that an automorphism $\alpha \in Aut(A)$ is automatic if its graph is an FA recognizable language.

\begin{theorem} \label{Thm:Semidirect}
Let $A$ and $B$ be graph automatic groups with finite sets of generators $X$ and $Y$, and $\tau:B \to Aut(A)$ an injective homomorphism. Assume that the automorphism $\tau(y)$ is automatic  for every $y \in Y$. Then the semidirect product $G = A \rtimes_\tau B$ is graph automatic.
\end{theorem}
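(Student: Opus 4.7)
The plan is to use the unique normal form $g = ba$ with $b\in B,\,a\in A$ for elements of the semidirect product, and represent such an element by the convolution of representatives. Fix graph automatic presentations $(R_A,\nu_A)$ and $(R_B,\nu_B)$ of $A$ and $B$ over alphabets $\Sigma_A$ and $\Sigma_B$, respectively (with FA recognizable equivalences $E_A,E_B$ and FA recognizable right-multiplication relations for every $x\in X$ and $y\in Y$). Take as generating set of $G$ the union $X\cup Y$, and declare
\[
R_G \;=\; \{\,w_b\con w_a \;\mid\; w_b\in R_B,\ w_a\in R_A\,\}\qquad\text{with}\qquad \nu_G(w_b\con w_a)=\nu_B(w_b)\,\nu_A(w_a).
\]
Since $R_A$ and $R_B$ are regular, $R_G$ is regular over the product alphabet, and the induced equivalence $E_G$ is $E_B\wedge E_A$ read componentwise, hence FA recognizable.

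It remains to produce, for every generator of $G$, an automaton recognizing the corresponding right-multiplication relation on $R_G$. For a generator $x\in X$ the identity $(ba)x=b(ax)$ shows that the $B$-component is preserved (use $E_B$) and the $A$-component is acted on by right multiplication by $x$ in $A$, whose graph on $R_A$ is FA recognizable by assumption; thus the required relation on the convolutions is the conjunction of these two FA recognizable binary relations, each acting on one track of the convolution. For a generator $y\in Y$ the semidirect product law gives $(ba)y = (by)\,\tau(y)(a)$, so the $B$-component is acted on by right multiplication by $y$ in $B$ (FA recognizable because $B$ is graph automatic) and the $A$-component is acted on by the automorphism $\tau(y)$, whose graph on $R_A$ is FA recognizable precisely by the hypothesis that $\tau(y)$ is an automatic automorphism. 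Once again the target relation is a conjunction of two FA recognizable binary relations, each accessing only one track of $w_b\con w_a$, so it is itself FA recognizable.

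The only nontrivial point is the treatment of the $A$-track when multiplying by $y\in Y$: conjugation by $y$ twists the $A$-part by $\tau(y)$, and without some effectivity assumption on $\tau(y)$ one cannot expect this twist to be realized by a finite automaton on the automatic representation of $A$. That is exactly what the hypothesis ``$\tau(y)$ is automatic for every $y\in Y$'' provides, and it is the only place in the construction where this hypothesis is needed. With it, the componentwise recipe above produces, uniformly in the generator of $G$, an automaton on $R_G$ that recognizes the corresponding edge relation in the Cayley graph $\Gamma(G,X\cup Y)$, establishing that $G=A\rtimes_\tau B$ is graph automatic.
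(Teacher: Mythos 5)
Your proposal is correct and follows essentially the same route as the paper's proof: both use the unique normal form $g=ba$, represent elements by pairing a $B$-representative with an $A$-representative, and observe that right multiplication by $x\in X$ acts only on the $A$-track while right multiplication by $y\in Y$ multiplies the $B$-track by $y$ and twists the $A$-track by $\tau(y)$, which is exactly where the automaticity hypothesis on $\tau(y)$ enters. Your write-up is somewhat more explicit about the convolution encoding and the induced equality relation, but the argument is the same.
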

\begin{proof}
Let $R$ and $S$ be regular sets that give graph automatic representations of $A$ and $B$. The generators of the semidirect product are of the form $(e_B, x)$ and $(y, e_A)$, where $x\in X$, $y\in Y$,
and $e_A$, $e_B$ are units of $A$ and $B$ respectively. For any $x \in X$ the relation $(sr)(e_B,x) = s_1r_1$ is obviously FA recognizable. Similarly, for each $y\in Y$, we have  $(sr)(y, e_A) = syr^y$ and this relation is also FA recognizable, since the graph $\{(r,r^y) \mid r\in R\}$ is FA recognizale. This is because
 $\tau(y)$ is automatic. This proves the theorem.
\end{proof}

An immediate corollary of the theorem is this:

\begin{corollary}
Direct product of two graph automatic groups is graph automatic. \qed
\end{corollary}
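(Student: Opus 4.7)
The plan is to recognize the direct product $A \times B$ as the trivial special case of the semidirect product construction of Theorem \ref{Thm:Semidirect}. Take $\tau : B \to Aut(A)$ to be the homomorphism sending every $b \in B$ to the identity automorphism $id_A$. Under this choice the twisted multiplication rule $(ba)(b_1 a_1) = b b_1 a^{b_1} a_1$ collapses to $(ba)(b_1 a_1) = b b_1 a a_1$, so $A \rtimes_\tau B$ coincides with the direct product $A \times B$.

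To invoke Theorem \ref{Thm:Semidirect} one only needs to check that each $\tau(y) = id_A$ for $y \in Y$ is automatic, which is immediate: its graph $\{(r,r) \mid r \in R_A\}$ is the diagonal of a regular set, hence FA recognizable. A glance at the proof of Theorem \ref{Thm:Semidirect} confirms that the injectivity of $\tau$ is not actually invoked anywhere; only the automaticity of each $\tau(y)$ is used, in verifying that the right multiplication relations by the generators of $A \rtimes_\tau B$ are FA recognizable. Hence Theorem \ref{Thm:Semidirect} gives graph automaticity of $A \times B$ immediately.

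If one prefers to sidestep the injectivity issue altogether, a completely direct construction is also available. Given regular sets $R_A$ and $R_B$ providing graph automatic presentations of $A$ and $B$, take $R_A \otimes R_B$ as the representation of $A \times B$. Right multiplication by a generator $(x, e_B)$ with $x \in X$ is then captured by the relation $\{(\con(s,t), \con(s',t)) \mid (s,s') \in E^A_x, \ t \in R_B\}$, which is FA recognizable by a product-automaton construction combining the recognizer for $E^A_x$ with the one for $R_B$; generators of the form $(e_A, y)$ are symmetric. The only real point to verify is that the injectivity hypothesis of Theorem \ref{Thm:Semidirect} is inessential for this application, which an inspection of its proof settles, so I expect no substantive obstacle.
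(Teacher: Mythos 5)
Your proposal is correct and matches the paper's intent exactly: the paper derives this as an immediate consequence of Theorem \ref{Thm:Semidirect} via the trivial action, offering no further argument. Your observation that the trivial homomorphism fails the theorem's stated injectivity hypothesis (when $B$ is nontrivial) is a legitimate and careful point, and both of your remedies --- noting that injectivity is never used in the proof, or giving the direct convolution construction on $R_A \otimes R_B$ --- are sound.
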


Consider the group $G = (\Z \times \Z) \rtimes_A \Z$, where $A \in SL(2,\Z)$. Here we mean that the action of a generator, say $t$, of $\Z$  on $\Z \times \Z$ is given by the matrix $A$. Such groups play an important part as lattices in the Lie group $Sol = (\mathbb{R} \times \mathbb{R}) \rtimes \mathbb{R}$, where $t$ acts on $\mathbb{R} \times \mathbb{R}$ by a diagonal matrix $diag(e^t,e^{-t})$. These groups are also interesting in our context because of the following observation. If $A$ is conjugate in $GL(2,\mathbb{R})$ to a matrix $diag(\lambda, \lambda^{-1})$ for some $\lambda >1$, then $G$ has exponential Dehn function, hence $G$ is not an automatic group \cite{Thurston}. The theorem above can be applied to prove the
next result.

\begin{proposition} \label{pr:sol}
The group $G = (\Z \times \Z) \rtimes_A \Z$ is graph automatic for every $A \in SL(2,\Z)$.
\end{proposition}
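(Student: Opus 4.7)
The plan is to invoke Theorem \ref{Thm:Semidirect} with the two factors $\Z^2$ and $\Z$, both graph automatic by Example \ref{Ex:Abelian}, and $\tau\colon\Z\to\mathrm{Aut}(\Z^2)$ the homomorphism sending a generator $t$ of $\Z$ to the automorphism $\sigma_A\colon v\mapsto Av$. Strictly, Theorem \ref{Thm:Semidirect} assumes $\tau$ is injective, so first I would dispose of the case where $A$ has finite order $k$ in $\mathrm{GL}(2,\Z)$: there $t^k$ is central, the subgroup $\langle\Z^2,t^k\rangle\cong\Z^3$ has finite index in $G$, and Theorem \ref{Thm:FE} combined with Example \ref{Ex:Abelian} already gives graph automaticity. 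We may therefore assume $A$ has infinite order, in which case $\tau$ is injective and the theorem reduces to showing that $\sigma_A$ is an automatic automorphism of $\Z^2$.

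Write $A=(a_{ij})$ with fixed integer entries $a_{ij}$, and fix a standard signed-binary encoding (least significant digit first) which makes $(\Z,+)$ an automatic structure in the spirit of Example \ref{Ex:Presburger}; encode each $(x,y)\in\Z^2$ by the convolution $\con(x,y)$. What remains is to verify that the graph
$$
\Gamma_A \;=\; \{\,((x,y),(x',y'))\; :\; x' = a_{11}x+a_{12}y,\; y' = a_{21}x+a_{22}y\,\}
$$
is FA recognizable in this encoding. For each fixed $n\in\Z$ the relation $\{(z,nz):z\in\Z\}$ is first-order definable in $(\Z,+)$ (for $n\geq 0$ it is an $n$-fold sum, and for $n<0$ one combines with negation, itself definable from addition), so $\Gamma_A$ is a first-order Boolean combination of constant-multiplication relations and the graph of addition, hence first-order definable in $(\Z,+)$. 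Because $(\Z,+)$ is automatic, Theorem \ref{FDT}(1) produces an automaton recognizing $\Gamma_A$, so $\sigma_A$ is automatic and Theorem \ref{Thm:Semidirect} yields that $G$ is graph automatic.

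There is no serious obstacle: the arithmetic takes place entirely inside the automatic structure $(\Z,+)$, and Theorem \ref{FDT} supplies the automata. The conceptual content of the proposition is that this approach works uniformly in $A$, even though for many $A$ (for instance those with a real eigenvalue of modulus greater than one) the group $G$ has exponential Dehn function and is therefore \emph{not} Thurston-automatic by property (B) from the introduction; binary encoding absorbs the exponential distortion coming from iterating $A$, so a single application of $A$ remains a finite-state operation.
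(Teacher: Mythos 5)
Your proposal is correct and follows essentially the same route as the paper: the paper's proof likewise invokes Theorem \ref{Thm:Semidirect} after observing that $A$ induces an FA recognizable automorphism of $\Z\times\Z$, and its alternative argument is exactly your justification of that observation, namely that the graph of $v\mapsto Av$ (and indeed the whole multiplication by a fixed generator, on elements encoded as integer triples) is first-order definable in $(\Z;+)$ and hence FA recognizable by Theorem \ref{FDT}. Your extra care with the injectivity hypothesis of Theorem \ref{Thm:Semidirect} when $A$ has finite order, handled via Theorem \ref{Thm:FE}, is a sound refinement the paper does not bother with.
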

\begin{proof}
We first note that every matrix $A \in SL(2,\Z)$ gives rise to an  FA recognizable automorphism of $\Z \times \Z$. Since the underlying groups $\Z$, $\Z\times \Z$ are graph automatic, by the theorem above the group $G$ is graph automatic.

\smallskip

Alternatively, graph automaticity of $G$ can also be shown via Theorem \ref{FDT}. Indeed,
the Cayley graph $\Gamma$ of $G$ is first-order interpretable in $(\Z; +)$, which is automatic.
To see this, represent elements of $G$  as triples $(x,y,t) \in \Z^3$. This set  is FA recognizable.
Now observe  that multiplication in $G$ is given by
$$
(x_1,y_1,t_1)(x_2,y_2,t_2) = ((x_1,y_1) +A(x_2,y_2)^T, t_1+t_2).
$$
Therefore  multiplication of $(x_1,y_1,t_1)$ by a fixed generator of $G$ is definable in $(\Z;+)$ as claimed.
\end{proof}
\subsection{Wreath products}

For the next theorem we define the {\em restricted wreath product} of a group $A$ by a group  $B$. Let $A_b$ be an isomorphic copy of $A$ for each $b\in B$. Consider the direct sum of groups $A_b$ denoted by $K$. Thus,
$$
K=\bigoplus_{b\in B} A_b,
$$
where elements of $K$ are functions $f: B\rightarrow A$ such that $f(b)=1_A$ for almost all $b\in B$.
We write elements of $K$ as $(a_b)$.
Each element $c \in B$ induces an automorphism $\alpha_{c}$ of $K$ as follows:
$$
\alpha_{c}(a_b)=(a_{bc}).
$$
The wreath product of $A$ by  $B$ consists of all pairs of the form $(b,k)$, where $b\in B$ and $k\in K$, with multiplication defined by:
$$
(b, k) \cdot (b_1, k_1)=(b b_1, \alpha_{b_1} (k)k_1).
$$
Thus, the wreath product of $A$ by $B$ is simply the semidirect product of
$K=\bigoplus_{b\in B} A_b$ and $B$ relative to the mapping $B\rightarrow Aut(K)$ given by  $c\rightarrow \alpha_c$.

\begin{theorem} \label{th:wreath}
For every finite group $G$ the wreath product of $G$ by $Z$ is graph automatic.
\end{theorem}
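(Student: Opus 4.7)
The plan is to build an explicit graph-automatic representation of $W = G \wr \Z$. Since the base group $K = \bigoplus_{\Z} G$ of the semidirect decomposition $W \cong K \rtimes \Z$ is not finitely generated, Theorem~\ref{Thm:Semidirect} does not apply directly, so I would argue by hand. I would choose the generating set $X = \{t, t^{-1}\} \cup \{\tilde g : g \in G \setminus \{1_G\}\}$, where $t = (1, 1_K)$ and $\tilde g = (0, \delta_0^g)$ with $\delta_0^g(0) = g$ and $\delta_0^g(k) = 1_G$ for $k \neq 0$.

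Each element $(n, f) \in W$ would be encoded as a convoluted triple $(s, u, v)$ over an alphabet containing $\{0, 1, -\}$ and $G$. Here $s$ is a signed binary representation of $n$, $u \in G^\ast$ lists $f(0), f(1), \ldots, f(|u|-1)$, and $v \in G^\ast$ lists $f(-1), f(-2), \ldots, f(-|v|)$, with the convention that $f$ takes value $1_G$ outside the window $[-|v|, |u|-1]$. The set $R$ of such triples is manifestly regular, giving a surjection $\nu \colon R \to W$. The equivalence $\nu(r_1) = \nu(r_2)$ is FA recognizable: compare the two integer encodings, then check that $u_1, u_2$ (respectively $v_1, v_2$) agree on their common positions and are padded by $1_G$ beyond.

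Next I would verify FA recognizability of right-multiplication by each generator. From $(n, f)\cdot(m, h) = (n+m, \alpha_m(f)\cdot h)$ with $\alpha_m(f)(k) = f(k+m)$, one computes: $(n, f)\cdot t = (n+1, \alpha_1(f))$, which on encodings replaces $s$ by its binary increment (a standard regular relation), drops the first letter of $u$, and prepends to $v$ the letter $u[0]$ (using $1_G$ if $u$ is empty). The action of $t^{-1}$ is symmetric. Multiplication by $\tilde g$ leaves $s$ and $v$ unchanged and replaces $u[0]$ by $u[0]\cdot g$. Each of these transformations is a regular relation on the convolution of old and new encodings, verifiable by a finite automaton in a single left-to-right pass.

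The bulk of the technical effort will be the bookkeeping for boundary cases in which $|u|$ or $|v|$ changes after a shift by $t^{\pm 1}$; for instance, prepending a letter to $v$ or dropping the first letter of $u$ may lengthen or shorten the encoding, and the domain $R$ must accommodate all such valid encodings. Allowing the map $\nu$ to be many-to-one, so that padding with trailing copies of $1_G$ is absorbed into the equivalence relation, should make these cases dissolve into routine casework. Once the relevant automata are assembled, every edge predicate $E_x$ of $\Gamma(W, X)$ is FA recognizable relative to $\nu$, and $W$ is graph automatic.
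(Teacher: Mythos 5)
Your proposal is correct and follows essentially the same route as the paper: encode an element as the convolution of a binary integer with a finite word over $G$ recording the finitely supported function, and check that multiplication by the shift generator and by the elements $(0,g)$ are regular relations (the shift being a one-position offset that a synchronous automaton handles with one letter of delay). The only cosmetic difference is that you split the positive and negative coordinates into two tracks with a many-to-one $\nu$, whereas the paper uses a single word with a marked position~$0$ and canonical (unpadded) representatives; your side remark that Theorem~\ref{Thm:Semidirect} does not literally apply because the base group is infinitely generated is a fair observation, and the paper's explicit construction is what carries the proof there as well.
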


\begin{proof}
This follows directly from Theorem \ref{Thm:Semidirect}. However, we give an explicit automatic presentation
of the wreath product. The elements of the wreath product are of the form
$$
(i, (\ldots, g_{-n}, g_{-n+1}, \ldots, g_{-1}, g_{0}, g_{1}, \ldots, g_{m-1}, g_{m}, \ldots)),
$$
where  $g_j\in G$ and $i \in Z$. We refer to $g_0$ as the element of $G$ at position $0$. We can assume that $g_{k}$ is the identity $1_G$ of the group $G$ for all $k<-n$ or $k>m$, and
$g_{-n}\neq 1_G$  and $g_m\neq 1_G$. We can represent the element above as the following string
$$
\con (i, g_{-n} \ldots g_{-1} (g_0, \star) g_1  \ldots g_m),
$$
where $i$ is written in binary. The alphabet of these strings is clearly finite since $G$ is a finite group. The symbol $\star$ in this string represents  elements of $G$ at position $0$.  The generators of the wreath product are elements $(0,g)$ and $(1,g)$ represented by the strings
$\oplus(0,(g,\star))$ and $\oplus(1, (1_G,\star))$, where $g\in G$. Multiplication by these generators works as follows:
$$
\con (i, g_{-n} \ldots (g_0, \star)  \ldots g_m) \cdot \oplus (0,(g,\star))=\con (i, g_{-n} \ldots (g_0\cdot g, \star)  \ldots g_m)
$$
and
$$
\con (i, g_{-n} \ldots (g_0, \star)   \ldots g_m) \cdot \oplus (1,(1_G,\star))=\con (i+1, g_{-n+1}' \ldots (g_{0}', \star)   \ldots g_{m+1}'),
$$
where $g_{j+1}'=g_j$ for $j\in \{-n, \ldots, m\}$. These operations can clearly be performed by finite automata. The theorem is proved.
\end{proof}

The theorem above can be applied to construct many examples of graph automatic groups that are not finitely presented.
Hence, these give us another class of graph automatic but not automatic groups.

\begin{corollary}
There exist graph automatic but not finitely presented (and hence not automatic) groups.
\end{corollary}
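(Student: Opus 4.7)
The plan is to exhibit a concrete example produced by Theorem \ref{th:wreath} and then show it cannot be finitely presented. Specifically, take $G$ to be any non-trivial finite group, say $G = \Z_2$, and consider the restricted wreath product $W = \Z_2 \wr \Z$, the lamplighter group. Theorem \ref{th:wreath} already gives that $W$ is graph automatic, so the entire task reduces to verifying that $W$ is not finitely presented. Once this is established, property (A) from the introduction (every automatic group is finitely presented) immediately yields that $W$ is not automatic, completing the corollary.

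To show that $W$ is not finitely presented, I would invoke the classical theorem of G.~Baumslag: a restricted wreath product $A \wr B$ of non-trivial groups is finitely presented if and only if $A$ is finitely presented and $B$ is finite. Since here $A = \Z_2$ is finite but $B = \Z$ is infinite, the wreath product $W$ is finitely generated (by $(1,1_{\Z_2})$ and $(0,g)$ for a generator $g$ of $\Z_2$) yet not finitely presented. Thus $W$ is a graph automatic group that is not finitely presented, and a fortiori not automatic in the sense of Thurston.

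For readers who prefer a self-contained argument, one can sketch non-finite-presentability directly. The normal subgroup $K = \bigoplus_{i\in \Z} (\Z_2)_i$ of $W$ is not finitely generated as a subgroup, and any finite presentation of $W$ would, after Reidemeister--Schreier rewriting with respect to the quotient $W/K \cong \Z$, force the kernel of the retraction onto $\Z$ to be a finitely generated normal subgroup. One can verify by the standard argument using the relations $(0, g_i)(0, g_j) = (0, g_j)(0, g_i)$ between copies of $\Z_2$ at different positions that no finite set of relations suffices: every finite set of such commutation relations, together with the relations coming from the action of $\Z$, only pins down commutativity among boundedly many lamps, whereas the group requires commutativity at arbitrary distances. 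This is the main obstacle in the self-contained route, but it is by now a routine calculation in the theory of finitely generated groups, and the quickest path is simply to cite Baumslag's theorem.

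The harder conceptual point, the graph automaticity, has already been done in Theorem \ref{th:wreath}; non-finite-presentability of $W$ is the classical ingredient that makes the corollary work.
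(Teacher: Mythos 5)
Your proposal is correct and follows essentially the same route as the paper: both apply Theorem \ref{th:wreath} to the restricted wreath product of a non-trivial finite group by $\Z$ and then invoke Baumslag's theorem to conclude non-finite-presentability, hence non-automaticity. The specialization to the lamplighter group and the sketched self-contained argument are harmless additions but do not change the substance.
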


\begin{proof}
The restricted  wreath product of a non-trivial finite group $G$ by $\Z$,  
by Theorem \ref{th:wreath},  is graph automatic. Now we use the following theorem by Baumslag \cite{Baumslag60}. For finitely presented groups  $A$ and $B$, the restricted wreath product of  $A$ by $B$ is finitely presented if and only if either $A$ is trivial or 
$B$ is finite. Hence, for nontrivial finite group $G$, the restricted wreath product of $G$ by $Z$ is not finitely presented but graph automatic.
\end{proof}

\subsection{Free products}

In this section we prove that  graph automaticity is preserved with
respect to free products.  The result follows from
representation of elements of the free product by their normal forms. 

\begin{theorem} \label{Thm:Free-product}
If $A$ and $B$ are graph automatic groups then their free product $A\star B$ is again graph  automatic.
\end{theorem}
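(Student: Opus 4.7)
The plan is to build a graph automatic presentation of $A \star B$ by encoding the classical alternating normal form in a regular language. Let $\Gamma(A,X) = (V_A, E^A_{x_1}, \ldots, E^A_{x_k})$ and $\Gamma(B,Y) = (V_B, E^B_{y_1}, \ldots, E^B_{y_\ell})$ be the given graph automatic presentations, with $V_A \subseteq \Sigma_A^\ast$, $V_B \subseteq \Sigma_B^\ast$, and let $e_A \in V_A$, $e_B \in V_B$ denote the identity representatives. After renaming letters we may assume $\Sigma_A \cap \Sigma_B = \emptyset$; pick a fresh separator $\#$ and set $\Sigma = \Sigma_A \cup \Sigma_B \cup \{\#\}$. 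Define
\[
V = \{\lambda\} \cup \bigl\{\, r_1 \# r_2 \# \cdots \# r_n : n \geq 1,\ r_i \in (V_A\setminus\{e_A\}) \cup (V_B\setminus\{e_B\}),\ r_i \text{ and } r_{i+1} \text{ lie in distinct factors}\,\bigr\}.
\]
Since $V_A\setminus\{e_A\}$ and $V_B\setminus\{e_B\}$ are regular and alternation is a finite-state condition, $V$ is regular. The natural map $V \to A\star B$ sending $\lambda$ to $1$ and $r_1\#\cdots\#r_n$ to the product of the represented group elements is a bijection, by uniqueness of the alternating normal form in a free product.

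For each generator $a \in X$ of $A$ I would construct a synchronous two-tape nondeterministic automaton $\M_a$ recognizing $\{(u,v) \in V^2 : \overline v = \overline u \cdot a \text{ in } A\star B\}$. The key observation is that right multiplication by $a$ alters only the last block of the normal form, so $\M_a$ nondeterministically chooses among three cases. Case (i): $u = \lambda$ or the last block of $u$ lies in $V_B$; then $v$ is $u$ with $\#\,r_a$ appended (or just $r_a$ if $u=\lambda$), where $r_a \in V_A$ is the fixed representative of $a$. Case (ii): the last block $r$ of $u$ lies in $V_A$ and $\overline{r}\cdot a \neq 1_A$; then $v$ agrees with $u$ up through the last $\#$, and the final block $r'$ satisfies $(r,r') \in E_a^A$ with $r'\neq e_A$. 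Case (iii): the last block $r$ of $u$ lies in $V_A$ and $\overline{r}\cdot a = 1_A$; then $v$ equals $u$ with its last block and the preceding $\#$ removed (or $v=\lambda$ if $u$ had only one block). Each condition is FA recognizable in $V^2$, using the given automaton $E_a^A$ and the fact that $r_a$, $e_A$, and $\#$ are fixed finite data; the same construction applies symmetrically to generators of $B$.

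The main obstacle is that a synchronous two-tape automaton cannot scan ahead on tape $u$ to locate the beginning of its last block. This is resolved by the standard nondeterminism technique: the automaton reads both tapes in parallel, enforcing symbol-by-symbol agreement through a commonly guessed prefix, and at a nondeterministically chosen moment declares ``what remains on tape $u$ is the final block,'' after which it switches to the case-specific verification mode. Incorrect guesses are rejected because the final-state condition requires that no further $\#$ appear on tape $u$ after the switch, and the case-specific comparison (using $E_a^A$, or appending $\#\,r_a$, etc.) is then checked to the end of the convolution. Collecting $V$ together with the automata $\M_a$ and their $\M_b$-counterparts produces an automatic structure isomorphic to $\Gamma(A\star B,\, X\cup Y)$, which proves the theorem.
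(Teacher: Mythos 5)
Your proposal is correct and follows essentially the same route as the paper: represent elements of $A\star B$ by alternating normal forms over the disjoint alphabets with a fresh separator, observe that this set is regular, and recognize right multiplication by a generator via a case split on the factor containing the last block. In fact you are somewhat more careful than the paper's own argument, which omits the cancellation case where the last block multiplied by the generator becomes trivial, and you also spell out the nondeterministic guessing needed for a synchronous automaton to locate the final block.
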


\begin{proof}
Since $A$ and $B$ are graph automatic we can assume that the elements of $A$ and $B$ are strings over disjoint alphabets $\Sigma_1$ and $\Sigma_2$. Therefore,
$A\cap B=\{\lambda\}$. A {\em normal form} is a sequence of the type
$$
g=g_1 \Box g_2 \Box \ldots \Box g_n,
$$
where $g_i \in A\cup B$, $g_i \neq \lambda$, $\Box \not \in \Sigma_1 \cup \Sigma_2$, and the adjacent elements $g_i$ and $g_{i+1}$ are not from the same group $A$ or $B$, where $i\geq 0$.
The set $N$ of all normal forms  is FA recognizable. Every element in $A \ast B$ is uniquely represented by some normal form $g$ in $N$  and every normal form $g \in N$  gives rise to a unique element in $A \ast B$.   If  $a_1, \ldots, a_n$ generate $A$
and $b_1, \ldots, b_m$ generate  $B$  then these elements together  generate the whole group $A\star B$.
The multiplication by each of these generators can be performed by finite automata using the automata given for the underlying groups $A$ and $B$. For instance, the automaton $M$ that multiplies $g \in N$ by a generator $a\in A$ can be described as follows. Given $g, g' \in N$ the automaton $M$ reads $\con (g, g')$. The aim  is to detect if $ga=g'$ in $G$. Assume that
$$
g'=g_1' \Box g_2' \Box \ldots \Box g_n'.
$$
Notice that if $g_n\in B$ then $g'$ must be of the form:
$$
g'=g_1 \Box g_2 \Box \ldots \Box g_n \Box a.
$$
And if $g_n\in A$ then $g_n'$ must be of the form:
$$
g'=g_1 \Box g_2 \Box \ldots \Box g_{n-1} \Box g_n' ,
$$
where $g_na=g_n'$ in the group $A$. The last equality can be detected by a finite automaton using the automaton that recognizes the multiplication by $a$ in the group $A$.
\end{proof}

\subsection{Amalgamated products}

Let $A$ and $B$ be groups. Let $\phi$ be an isomorphism from a subgroup $H_A$ of $A$ into the subgroup $H_B$ of $B$. By $H$ we denote the isomorphism type of the group $H_A$. 
The {\em amalgamated product} of $A$ and $B$  by $H$, denoted by $A\star_{H} B$,
is the factor group of $A\star B$ by the normal closure of the set $\{\phi(h) h^{-1} \mid h\in H_A\}$.
The amalgamated product $A\star_H B$ is viewed as the result of identifying $H_A$ and $H_B$ in  the free product $A\star B$. 
Below we show simple conditions guaranteeing graph automaticity of amalgamated products.

\begin{theorem}
Let $A, B$ be graph automatic groups and $H$  a subgroup of $A$ and $B$. Suppose that one of the following conditions holds:
\begin{enumerate}
\item The group $H$ is graph biautomatic and $A$ and $B$ are finite extensions of $H$, or
\item  $H$ is a finite subgroup of both $A$ and $B$.
\end{enumerate}
Then the amalgamated product $A\star_H B$ is graph automatic.
\end{theorem}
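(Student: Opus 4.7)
The strategy is the same in both cases: use the classical normal form for amalgamated products to define a candidate automatic encoding of $A\star_H B$, and then verify FA recognizability of the encoding set and of right-multiplication by each generator of $A$ and $B$. Fix left transversals $T_A\subseteq A$ and $T_B\subseteq B$ for $H$, each containing the identity. Every element of $A\star_H B$ has a unique normal form $c_1 c_2\cdots c_n\cdot h$ where $h\in H$, each $c_i\in (T_A\cup T_B)\setminus\{1\}$, and consecutive $c_i$'s come from different transversals. The plan is to encode elements by such normal forms, show the encoding set is FA recognizable, and show that right-multiplication by any generator induces a FA recognizable binary relation on encodings; graph automaticity of $A\star_H B$ then follows at once.

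For Case~(2), where $H$ is finite but the coset spaces $A/H$ and $B/H$ may be infinite, I would take $T_A$ and $T_B$ to consist of the length-lexicographically least element of each left coset; these are FA recognizable in the automatic structures of $A$ and $B$ by Theorem~\ref{FDT}. An element is encoded as the concatenation $c_1 \# c_2 \# \cdots \# c_n \# h$ over the union of $A$'s and $B$'s alphabets together with a fresh separator $\#$ and a symbol for each of the finitely many elements of $H$; alternation and transversal membership are verified in a single pass. For right-multiplication by a generator $x\in A$, only the tail $c_n\# h$ is affected: compute $hx$ in $A$, decompose it as $c\cdot h'$ with $c\in T_A$, $h'\in H$, and if $c_n\in T_A$ further decompose $c_n c\in A$ as $c'\cdot h''$ and update the tail to $c'\# h''h'$ (absorbing the case $c'=1$ into the previous symbol). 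Finiteness of $H$ bounds the internal state needed, and each intermediate operation is FA recognizable in $A$.

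For Case~(1), where $H$ is graph biautomatic and $A,B$ are finite extensions of $H$, the transversals $T_A,T_B$ are finite. Let $R_H\subseteq\Sigma_H^\ast$ be the graph biautomatic representation of $H$. Encode a normal form as the convolution $\otimes(w,\bar h)$, where $w$ is the alternating word of transversal symbols over the finite alphabet $(T_A\sqcup T_B)\setminus\{1\}$ and $\bar h\in R_H$ represents $h$. The encoding set is FA recognizable. For right-multiplication by a generator $x=t_0 h_0$ of $A$ (with $t_0\in T_A$, $h_0\in H$), normality of $H$ in $A$ gives $ht_0=t_0\phi_{t_0}(h)$, where $\phi_{t_0}$ is conjugation by $t_0$, so
\[
c_1\cdots c_n h\cdot x = c_1\cdots c_n t_0\bigl(\phi_{t_0}(h)\cdot h_0\bigr).
\]
The word $w$ then changes only at its last one or two positions---by a finite-table lookup for the decomposition $c_n t_0 = c' h''$ in $A$ when $c_n\in T_A$---and $\bar h$ is updated by the composition of $\phi_{t_0}$ with right-multiplication by a fixed element of $H$.

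The main obstacle is to show that the conjugation automorphisms $\phi_t\colon H\to H$ are automatic with respect to $R_H$ for each $t\in T_A\cup T_B$. My plan is to invoke Corollary~\ref{Cor:FE}: since $H$ is graph biautomatic, the finite extension $A$ admits a graph biautomatic presentation whose $H$-part agrees with $R_H$ (decorated by finitely many coset labels for $A/H$). In that presentation both right- and left-multiplications by $t$ are FA recognizable binary relations, and their composition realizes $h\mapsto t^{-1}ht$ as an FA recognizable function on $R_H$; this is precisely where graph biautomaticity (rather than mere automaticity) of $H$ is essential, because left-multiplication by $t$ takes $h$ out of $H$ and must be handled inside the augmented structure. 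Once $\phi_t$ is known to be automatic, the tail updates to both $w$ and $\bar h$ become FA recognizable, and assembling these automata over the finite generating sets of $A$ and $B$ completes the proof.
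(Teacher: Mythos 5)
Your proposal follows essentially the same route as the paper: unique alternating normal forms $c_1\cdots c_n h$ with coset representatives and an $H$-tail, regularity of the normal-form language, and FA recognizability of right multiplication by each generator --- using Corollary \ref{Cor:FE} to upgrade $A$ and $B$ to graph biautomatic groups with $H$ regular in Case (1) (the paper computes $g_n z x$ directly in the biautomatic presentation of $A$ and re-decomposes as $a\cdot h$, which is the same calculation you organize via the conjugation $\phi_{t_0}$), and reducing to finitely many fixed-element multiplications $\{(u,v)\mid uzg=v\}$ via Lemma \ref{L:Product} in Case (2). The argument is correct and the differences from the paper's proof are only organizational.
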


\begin{proof} 
Assume that $H$ is a graph biautomatic group, and $A$ and $B$ are finite extensions of $H$. By Corollary \ref{Cor:FE}, both
$A$ and $B$ can be assumed to be graph biautomatic groups. Moreover, we can assume that the set of elements of the 
subgroup $H$ is a regular language.  

\smallskip

As in the proof of Theorem \ref{Thm:Free-product}, we consider normal forms. Note
that elements of $A$ and $B$ are strings (under given automatic presentations) since $A$ and $B$ are graph automatic.
We also assume that the alphabets $\Sigma_1$ and $\Sigma_2$ of graph automatic representations
of $A$ and $B$ are disjoint.

\smallskip

We choose the set of representatives $R_A$ and $R_B$ of the cosets of $H$ in $A$ and in $B$, respectively. 
These two sets are finite by the assumption. Let $a_1, \ldots, a_m$ and $b_1, \ldots, b_s$ be the strings from $R_A$ and $R_B$, 
respectively. Note that for every $g\in \{a_1, \ldots, a_m, b_1, \ldots, b_m\}$, by Lemma \ref{L:Product} and graph biautomaticity of $A$ and $B$,  the sets
$$
\{(u, v) \mid u,v \in A \ \& \ ug=v\}  \ \ \mbox{and} \ \ \{(u, v) \mid u,v \in A \ \& \ gu=v\}
$$
are FA recognizable sets.
Let $\Box$ be a symbol not in
$\Sigma_1\cup \Sigma_2$. Define an {\bf $A$-normal form} as a sequence
$$
g_1 \Box g_2 \Box \ldots \Box g_n \Box z,
$$
such that
\begin{enumerate}
\item each $g_i$ belongs to either $R_A$ or $R_B$, $g_i\neq \lambda$,
\item two consecutive $g_i$ and $g_{i+1}$ belong to distinct set of represenatives, and
\item $z\in H_A$.
\end{enumerate}

It is not hard to see that the set of all $A$-normal forms is a regular language that we denote by $N$.
This set determines elements of the amalgamated group $A \star_{H} B$. Moreover, each element in the amalgamated
group has a unique representation in $A$-normal form \cite{Oleg}.

\smallskip

Let $X$ and $Y$ be finite set of generators for $A$ and $B$, respectively.
Our goal is now to show that the multiplication of elements in $N$ by each of the generator elements from
$X\cup Y$ is FA recognizable. We first consider the case when the generator is in $X$. Take $v\in N$ of the form
$g_1 \Box g_2 \Box \ldots \Box g_n \Box z$, and a generator $x\in X$. 

\smallskip

Assume that $g_n\in R_B$.  Using the fact that $A$ is automatic, we can compute (the string representing) the element  $z\cdot x$.
Let $w$ be the element $z\cdot x$. We now find an element $a \in R_A$ such that  $w \in aH$. Hence, we can write
the element $w$ as $a\cdot h$ for some $h\in H$. Namely, $w=a(a^{-1}w)$. Thus, we have
the equality:
$$
vx=g_1 \Box g_2 \Box \ldots \Box g_n \Box a \Box a^{-1}w. 
$$
From the above, since the underlying groups are biautomatic, we see that this is a FA recognizable event, that is the set
$$
\{(v, v') \mid \mbox{$v$ is of the form $g_1 \Box g_2 \Box \ldots \Box g_n \Box z$, $v'\in N$, $g_n\in B$, $x\in X$, $v'=vx$} \}
$$
is FA recognizable. 

\smallskip

Assume now that $g_n \in R_A$. Since $A$ is biautomatic the set 
$$
\{(g_n,w) \mid  w= g_n\cdot z \cdot x, \  g_n\in R_A, \ z\in H, \ x\in X\}
$$
is FA recognizable. Now given $w=g_n\cdot z \cdot x$, we can represent it as the product $ah$ for some $a\in R_A$ and $h\in H$.
This is again a FA recognizable event. We conclude that the set multiplication by $x\in X$ of elements in $N$ can be recognized by finite automata. The case when we multiply elements of $N$ by the generators $y\in Y$ is treated similarly. 

\smallskip

Now we prove the second part of the theorem. Since $H$ is finite the set of all left co-sets with respect to $H$ in both $A$ and $B$ 
is uniformly FA recognizable. In other words, the sets
$$
\{(a_1, a_2) \mid a_1^{-1}a_2\in H, \ a_1, a_2\in A\} \ \mbox{and} \ \{(b_1, b_2) \mid b_1^{-1}b_2\in H, \ b_1, b_2\in B\} 
$$
are FA recognizable languages. Therefore we can select regular sets $R_A$ and $R_B$ of left-cost representatives of $A$ and $B$, respectively. As above, one considers the set $N$ of normal forms. For each $z\in H$ and $g\in X\cup Y$, for each of the groups 
$A$ and $B$ there exists a finite automaton that recognizes the language $\{(u,v)\mid uzg=v\}$. Therefore, since $H$ is finite, we have that for each $g\in X \cup Y$ the set
$$
\{(v,w)\mid v,w \in N, \ vg=w\}
$$
is FA recognizable. This proves the second part of the theorem. 
\end{proof}

As an application we give the following result.

\begin{corollary}
The groups $SL_2(\Z)$ and $GL_2(Z)$ are graph automatic.
\end{corollary}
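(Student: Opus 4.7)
The plan is to realise both groups as instances of the preceding preservation theorems. For $SL_2(\Z)$ I would invoke the classical amalgamated decomposition
\[
SL_2(\Z) \;\cong\; \Z_4 \star_{\Z_2} \Z_6,
\]
where the amalgamating subgroup $\Z_2 = \{\pm I\}$ is the center of $SL_2(\Z)$ and is embedded as the unique order-two subgroup of each of $\Z_4$ and $\Z_6$. (This is the lift of $PSL_2(\Z) \cong \Z_2 \star \Z_3$ through the center.) Since $\Z_4$ and $\Z_6$ are finite, they are trivially graph automatic (take the alphabet to enumerate their elements). The amalgamating subgroup $\Z_2$ is also finite, so the second clause of the amalgamated-product theorem applies directly and yields that $SL_2(\Z)$ is graph automatic.

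For $GL_2(\Z)$, the determinant map $\det: GL_2(\Z) \to \{\pm 1\}$ exhibits $SL_2(\Z)$ as a normal subgroup of index $2$. Hence $GL_2(\Z)$ is a finite extension of the graph automatic group $SL_2(\Z)$, and Theorem \ref{Thm:FE} immediately concludes that $GL_2(\Z)$ is graph automatic as well.

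No real obstacle arises here: the argument is a direct two-line application of Theorem \ref{Thm:FE} and the amalgamated-product theorem, once one recalls the standard decomposition of $SL_2(\Z)$. The only point one might wish to expand, if desired, is to briefly justify the decomposition $SL_2(\Z) \cong \Z_4 \star_{\Z_2} \Z_6$, but this is a well-known classical fact and the corollary requires nothing beyond citing it.
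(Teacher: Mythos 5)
Your argument is correct, and for $SL_2(\Z)$ it coincides with the paper's: both invoke the classical decomposition $SL_2(\Z) \cong \Z_4 \star_{\Z_2} \Z_6$ and apply the amalgamated-product theorem (the finite-amalgam clause suffices, since $\Z_4$, $\Z_6$, and $\Z_2$ are all finite). For $GL_2(\Z)$ you diverge from the paper: the paper uses a second amalgamated decomposition, $GL_2(\Z) \cong D_4 \star_{D_2} D_6$ with $D_n$ dihedral, and applies the same theorem again, whereas you observe that $SL_2(\Z)$ has index $2$ in $GL_2(\Z)$ and invoke Theorem \ref{Thm:FE} on finite extensions. Both routes are valid. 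Yours is more economical, since it reuses the result just established for $SL_2(\Z)$ and needs only the elementary fact that $\ker(\det)$ has index $2$, rather than recalling a second, less commonly cited classical decomposition; the paper's version has the mild virtue of treating the two groups uniformly by a single theorem. Either way the corollary follows, and no gap remains to be filled beyond citing the standard amalgam structure of $SL_2(\Z)$, which the paper likewise takes from the literature.
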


\begin{proof}

The group $SL_2(\Z)$ is isomorphic to  $\Z_4\star_{\Z_2} \Z_6$. Similarly, the group $GL_2(\Z)$  is isomorphic to $D_4 \star_{D_2} D_6$, where $D_n$ is a Dihedral group (see for instance \cite{Oleg}). By the theorem above the groups $SL_2(\Z)$ and $GL_2(Z)$ both are graph automatic.
\end{proof}

Of course, the groups $SL_2(\Z)$ and $GL_2(Z)$  are already known to be automatic (see \cite{Thurston}). 

\section{Subgroups}

In this section we describe a simple technique that is analogues, in some respect, to the technique of quasi-convex subgroups in Thurston automatic groups. \ Let $G$ be a graph automatic group. Assume that $R$ is a regular language representing the group $G$ via a bijection $\nu:R \to G$.
A finitely generated subgroup $H \leq G$ is called {\em regular} if the pre-image $\nu^{-1}$ is a regular
subset of $R$. A similar definition describe regular subgroups of graph automatic monoids.

\begin{proposition} \label{pr:regular-subgroup}
Let $G$ be a graph automatic group  or monoid and  $H$ a regular finitely generated subgroup of $G$. Then $H$ is graph automatic.
\end{proposition}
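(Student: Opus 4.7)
The plan is to use $R_H := \nu^{-1}(H)$ as the domain of a graph automatic presentation of $H$, with the restriction $\nu|_{R_H} \colon R_H \to H$ serving as the naming bijection. Since $\nu$ is a bijection onto $G$ and $H \subseteq G$, this restriction is automatically bijective, and the hypothesis that $H$ is regular says exactly that $R_H$ is FA recognizable. So the only content left is to verify that for a suitable finite generating set of $H$, the corresponding edge relations of $\Gamma(H, \cdot)$ on $R_H \times R_H$ are FA recognizable.

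First I would fix a finite generating set $X$ of $G$ from the given graph automatic presentation, together with a finite generating set $h_1, \ldots, h_k$ of $H$. Each $h_i$, being an element of the group $G$, can be written as a word $y_i$ in $X \cup X^{-1}$ (for the monoid case, $y_i$ is a word in $X$, noting that the monoid $G$ is generated by $X$ so every element of $H \subseteq G$ is expressible this way).

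Next, I invoke Lemma \ref{L:Product}, which guarantees that the relation
\[
M_{y_i} = \{(u,v) \in R \times R : \nu(v) = \nu(u) \cdot h_i\}
\]
is FA recognizable. The proof of that lemma goes through verbatim in the monoid setting, since it relies only on closure of FA recognizable relations under composition and first-order definability in an automatic structure. Intersecting $M_{y_i}$ with $R_H \times R_H$, which is regular because regular relations are closed under intersection, yields the required FA recognizable edge relation for multiplication by $h_i$ within $\Gamma(H, \{h_1, \ldots, h_k\})$. Doing this for every $i$ exhibits an automatic presentation of the Cayley graph of $H$, which is precisely the assertion that $H$ is graph automatic.

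I do not foresee a genuine obstacle here: the proposition is essentially the statement that "restricting a graph automatic presentation to a regular subdomain closed under the relevant multiplications is again graph automatic," and the algebraic content beyond that is only that each generator of $H$ is a word in generators of $G$. The only mild care needed is the routine check that Lemma \ref{L:Product} applies uniformly to monoids as well as groups, which follows directly from its proof via Theorem \ref{FDT}.
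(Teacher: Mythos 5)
Your argument is correct and follows essentially the same route as the paper: the paper extends a finite generating set of $H$ to one of $G$, invokes generator-set independence to get FA recognizable edge relations $E_x$ for the generators of $H$, and restricts them to the regular set $\nu^{-1}(H)$, which is exactly your combination of Lemma \ref{L:Product} (whose proof underlies that independence theorem) with intersection against $R_H \times R_H$. Your explicit remark that the lemma and the word representation must be checked in the monoid setting (no inverses) is a welcome bit of extra care that the paper leaves implicit.
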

\begin{proof}
Suppose that $R$ is a regular language representing the group $G$ via a bijection $\nu:R \to G$. Let $X_H$
be a finite generating set of $H$. Let $X_G$ be a finite generating set of $G$ containing $X_H$.  Since $G$ is graph automatic it is automatic relative to $X_H$. Hence the binary predicates $E_{x}$ are FA presentable in $R$.  Their restrictions to $\nu^{-1}(H)$ are  FA presentable as well.  Thus, $H$ is graph automatic.
\end{proof}

The following result, implied by the proposition above,  turns out to be useful in applications.

\begin{corollary} \label{pr:regular-subgroup-matrix}
Let $M_n(\Z)$ be the multiplicative monoid of all $n \times n$ integer matrices. If $H$ is a regular finitely generated subgroup of $M_n(\Z)$ then $H$ is graph automatic.
\end{corollary}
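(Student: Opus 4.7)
The plan is to reduce the statement directly to Proposition~\ref{pr:regular-subgroup} by exhibiting a graph automatic monoid structure on $M_n(\Z)$ that contains $H$ as a regular submonoid. Once this is set up, the desired conclusion is immediate from the proposition.

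First I would fix the representation: every matrix $X \in M_n(\Z)$ is encoded by the convolution $\otimes(x_{11},x_{12},\dots,x_{nn})$ of its $n^2$ entries, each written in signed binary (or base-$k$) notation. This domain is clearly FA recognizable, so the vertex set of our would-be Cayley graph is handled. The key technical fact is that for any \emph{fixed} matrix $A \in M_n(\Z)$, the graph of the unary operation $X \mapsto XA$ is FA recognizable: each entry of $XA$ is a $\Z$-linear combination of the entries $x_{ij}$ of $X$ with fixed integer coefficients, and addition, subtraction, and multiplication by a fixed constant are standard FA-recognizable operations on binary-encoded integers (they are all first-order definable in Presburger arithmetic, see Example~\ref{Ex:Presburger}).

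Now let $H = \langle A_1,\dots,A_k\rangle \leq M_n(\Z)$, and let $R \subseteq \otimes(\Z^{n^2})$ be the FA recognizable set encoding $H$ (which exists by the regularity hypothesis, via the natural bijection $\nu$ between encodings and matrices in $H$). For each generator $A_i$, the previous paragraph gives a finite automaton recognizing $\{(X,Y) : Y = X A_i\}$ on all of $M_n(\Z)$; intersecting this relation with $R \times R$ preserves FA recognizability. This yields a graph automatic presentation of $\Gamma(H,\{A_1,\dots,A_k\})$ directly, which is exactly what graph automaticity of $H$ demands. Equivalently, one can view this as applying Proposition~\ref{pr:regular-subgroup} where the ambient ``graph automatic monoid'' role is played by $M_n(\Z)$ equipped with the (infinite family of) right-multiplication operations; the fact that $M_n(\Z)$ is not finitely generated is harmless because only the finitely many operations indexed by the chosen generators $A_1,\dots,A_k$ of $H$ enter the argument.

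I do not anticipate a genuine obstacle here: the content is concentrated in the linearity observation that right-multiplication by a fixed integer matrix is FA recognizable on binary-encoded tuples of integers. The only minor point to keep careful is the representation of negative integers, which is handled by the standard two's-complement or sign-digit encoding, both of which keep addition and constant-multiplication FA recognizable.
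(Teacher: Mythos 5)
Your proposal is correct and follows essentially the same route as the paper: the paper's proof is the one-line observation that right-multiplication by a fixed $n\times n$ integer matrix is an automatic operation, which is precisely your linearity argument, combined with Proposition~\ref{pr:regular-subgroup}. You have simply spelled out the encoding and the intersection with $R\times R$ in more detail than the paper does.
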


\begin{proof}
It suffices to note that multiplication operation of matrices in $M_n(\Z)$  by a fixed $n \times n$-matrix is
an automatic operation.
\end{proof}



\section{Nilpotent Cayley graph automatic groups}


In this section we show that there are many interesting  finitely generated nilpotent groups which are graph automatic.  For this we need to introduce a particular technique of polycyclic presentations that initially comes from Malcev's work on nilpotent groups \cite{Mal1}. For  the detailed  exposition see the books \cite{KargMerz,HoltBook,BaumslagNil}.

\smallskip

Let $G$ be a group, $a =(a_1, \ldots,a_n)$  an $n$-tuple of elements in $G$, and $\alpha = (\alpha_1, \ldots, \alpha_n)$ an $n$-tuple of integers. By $a^{\alpha}$ we denote the following product
$$
a^{\alpha} = a_1^{\alpha_1}a_2^{\alpha_2} \ldots a_n^{\alpha_n}.
$$
Concatenation of two tuples $a$ and $b$ is denoted $ab$ and  a 1-tuple $(x)$ is usually denoted by  $x$.

\smallskip

Recall  that every finitely generated abelian group $A$ is a direct sum of cyclic groups:
$$
A = \langle a_1\rangle \times \ldots \langle a_s\rangle \times \langle b_1\rangle \times \ldots \langle b_t\rangle
$$
where $\langle a_i\rangle$ is an  infinite cyclic, and $\langle b_i\rangle$ is a finite cyclic of order $\omega(b_i)$. Every element $g \in A$ can uniquely  be represented in the form
\begin{equation} \label{eq:base}
g = a_1^{\alpha_1} \ldots a_s^{\alpha_s}b_1^{\beta_1} \ldots b_t^{\beta_t}
\end{equation}
where $\alpha_i \in \mathbb{Z}$ and $\beta_j \in \{0,1,\ldots, \omega(b_j)-1\}$. We call the tuple
$$
a = (a_1, \ldots, a_s,b_1,\ldots,b_t)
$$
a {\em base} of $A$ and the tuple $\sigma(g) = (\alpha_1, \ldots,\alpha_s,\beta_1, \ldots,\beta_t)$ the  coordinate of $g$ in the base $\bar a$. In this notation we write the equality (\ref{eq:base})  as folows $g = a^{\sigma(g)}$.


\smallskip

One can generalize the notion of  base to polycyclic groups.  Recall that a group $G$ is {\bf polycyclic} if there  is a sequence of elements $a_1, \ldots, a_n \in G$ that generates $G$ such that if $G_i$ denotes the subgroup $\langle a_i, \ldots, a_n \rangle$   then  $G_{i+1}$ is normal in $G_i$ for every $i$. In this case
\begin{equation}\label{eq:polycyclic-series}
G = G_1 \geq G_2 \geq \ldots \geq G_n \geq G_{n+1} = 1
\end{equation}
is termed a {\bf polycyclic series} of $G$. The sequence $a=\langle a_1, \ldots, a_n \rangle$ is called a base of $G$.

\smallskip

Let $a = (a_1, \ldots,a_n)$ be a base of a polycyclic group $G$.  Then the quotient $G_i/G_{i+1}$ is a cyclic group generated by the coset  $a_iG_{i+1}$. Denote by $\omega_i$ the order of the group $G_i/G_{i+1}$  which is the the order of the element $a_iG_{i+1}$ in $G_i/G_{i+1}$. Here
$\omega_i = \infty$ if the order is infinite. We refer to the tuple  $\omega(a) = (\omega_1, \ldots, \omega_n)$ as the {\bf order } of $a$. Now set  $Z_{\omega_i} = \mathbb{Z}$ if $\omega_i = \infty$ and $Z_{\omega_i} = \{0,1,2, \ldots, \omega_i -1\}$ otherwise.

\begin{lemma}
Let $a = (a_1, \ldots,a_n)$ be a base of a polycyclic group $G$ of order $\omega(a) = (\omega_1, \ldots, \omega_n)$ . Then for every $g \in G$ there is a unique decomposition of the following form:
\begin{equation}\label{eq:coordinates}
g = a_1^{\alpha_1} a_2^{\alpha_2} \ldots a_n^{\alpha_n} ,  \ \ \ \alpha_i \in Z_{\omega_i}.
\end{equation}
\end{lemma}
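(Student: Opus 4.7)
The plan is to prove this by induction on $n$, the length of the base, using the polycyclic series (\ref{eq:polycyclic-series}) as the scaffolding.

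For the base case $n = 1$, the group $G$ is cyclic generated by $a_1$ of order $\omega_1$, so the existence and uniqueness of the representation $g = a_1^{\alpha_1}$ with $\alpha_1 \in Z_{\omega_1}$ is the standard fact about cyclic groups.

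For the inductive step, the key observation is that the tail of the polycyclic series for $G$,
$$G_2 \geq G_3 \geq \ldots \geq G_n \geq G_{n+1} = 1,$$
exhibits $G_2$ as a polycyclic group with base $(a_2, \ldots, a_n)$ of order $(\omega_2, \ldots, \omega_n)$, since $G_i = \langle a_i, \ldots, a_n \rangle$ for $i \geq 2$ with the same normality and quotient data. For \emph{existence}, I would first project $g$ to the cyclic quotient $G_1 / G_2$, which is generated by $a_1 G_2$ and has order $\omega_1$; this gives a unique $\alpha_1 \in Z_{\omega_1}$ such that $g G_2 = (a_1 G_2)^{\alpha_1}$, whence $a_1^{-\alpha_1} g \in G_2$. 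Then by the inductive hypothesis applied to $G_2$ there exist $\alpha_2, \ldots, \alpha_n$ with $\alpha_i \in Z_{\omega_i}$ satisfying $a_1^{-\alpha_1} g = a_2^{\alpha_2} \cdots a_n^{\alpha_n}$, which rearranges to the desired decomposition.

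For \emph{uniqueness}, suppose
$$a_1^{\alpha_1} a_2^{\alpha_2} \cdots a_n^{\alpha_n} = a_1^{\beta_1} a_2^{\beta_2} \cdots a_n^{\beta_n}$$
with all $\alpha_i, \beta_i \in Z_{\omega_i}$. Reducing modulo $G_2$ (using that $a_2, \ldots, a_n \in G_2$) gives $(a_1 G_2)^{\alpha_1} = (a_1 G_2)^{\beta_1}$; since $a_1 G_2$ has order exactly $\omega_1$ in $G_1/G_2$ and $\alpha_1, \beta_1$ both lie in $Z_{\omega_1}$, we conclude $\alpha_1 = \beta_1$. Cancelling $a_1^{\alpha_1}$ reduces the identity to one inside $G_2$, and the inductive hypothesis yields $\alpha_i = \beta_i$ for $i = 2, \ldots, n$.

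There is no serious obstacle here; the only thing to check carefully is that the polycyclic structure restricts correctly to $G_2$ so that the inductive hypothesis applies with the right orders $\omega_i$, which is immediate because the subgroups $G_i$ for $i \geq 2$ and the resulting cyclic quotients are shared between the two polycyclic series.
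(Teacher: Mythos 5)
Your proof is correct and follows essentially the same route as the paper: peel off the $a_1$-coordinate via the cyclic quotient $G_1/G_2$, observe that $(a_2,\ldots,a_n)$ is a base of $G_2$ with the same orders, and induct on the length of the base. Your treatment of uniqueness is somewhat more explicit than the paper's, which simply carries the word ``unique'' through the same induction, but the argument is the same.
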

\begin{proof}
Let $g \in G$.  Since the quotient group $G_1/G_2$ is cyclic generated by $a_1G_2$ one has $gG_2 = a_1^{\alpha_1}G_2$ for some unique $\alpha_1 \in Z_{\omega_1}$.  The  element $g' = a_1^{-\alpha_1}g$ belongs to $G_2$. Notice that $(a_2, \ldots, a_n)$ is a base of $G_2$.  Hence by induction on the length of the base there is a unique decomposition of $g'$ of the type
$$
g' =  a_2^{\alpha_2} \ldots a_n^{\alpha_n} ,  \ \ \ \alpha_i \in Z_{\omega_i}.
$$
Now $g = a_1^{\alpha_1}g'$ and the result follows.
\end{proof}

In the notation above for an element $g \in G$ the tuple $\sigma(g) = (\alpha_1, \ldots, \alpha_n)$  from (\ref{eq:coordinates}) is called the tuple of {\bf coordinates } of $g$ in the base $a$. Sometimes we write the equality (\ref{eq:coordinates}) as $g = a^{\sigma(g)}$.

\smallskip

Finitely generated nilpotent groups are polycyclic so they have finite bases as above. Moreover, it is easy to see that an arbitrary  finitely generated group $G$ is nilpotent if and only if it has a finite base $(a_1, \ldots, a_n)$ such that the series (\ref{eq:polycyclic-series}) is  central, i.e., $[G_i,G] \leq G_{i+1}$ for every $i = 1, \ldots, n$ (here $G_{n+1} = 1$).

\smallskip

Now suppose $G$ is an arbitrary finitely generated nilpotent group of nilpotency class $m$.
The {\bf  lower central  series}  of $G$ is defined inductively by
$$
G_1 = G, G_2=[G_1, G], \ldots,  G_{i+1} = [G_i,G], \ldots
$$
By assumption,  we have $G_m \neq 1$ and $G_{m+1} = 1$. It follows that  all the quotients $G_i/G_{i+1}$ are finitely generated abelian groups. Let $d_i$ be a tuple of elements from $G_i$ such that its image in $G_i/G_{i+1}$ under the standard epimorphism is a base of the abelian group $G_i/G_{i+1}$.  Then the  tuple   $a = d_1d_2 \ldots d_m$ obtained by concatenation from the tuples $d_1, \ldots, d_m$, is a base of $G$. We refer to $a$ as a  {\bf lower central series} base of $G$.

\smallskip

Similarly, the {\bf  upper central series } of the group $G$ is the sequence:
$$
1=Z_0(G) \unlhd Z_1(G) \unlhd Z_2(G) \unlhd \ldots \unlhd Z_{i+1}(G) \unlhd \ldots,
$$
where $Z_{i+1}(G)$ is defined inductively as the set
$$
Z_{i+1}(G)=\{x\in G \mid \forall y \in G ([x,y]\in Z_i(G)\}.
$$
In particular $Z_1(G)$ is the center of $G$. Thus, the group $Z_{i+1}(G)$ is the full preimage of the center of the group $G/Z_i(G)$ under the canonical epimorphism $G \to G/Z_i(G)$.
If $G$ is  torsion-free then the quotients $Z_{i+1}(G)/Z_i(G)$ are free abelian groups of finite rank. In particular, one can choose a tuple $d_i$ of elements  from $Z_{i+1}(G)$ which form a standard basis of the free abelian group $Z_{i+1}(G)/Z_i(G)$, where $i = 1, \ldots,m$. The tuple
$a = d_m d_{m-1}  \ldots  d_1$ obtained
as concatenation of $d_m$, $\ldots$,  $d_1$ is called an {\bf upper central   base} of $G$. Notice that in this case  $\omega_i = \infty$ for each $i = 1, \ldots,n$. Such bases are  called {\em Malcev's bases} of $G$.

\smallskip

Now we give the following important definition that singles out  special type of polynomials needed
to perform the multiplication operation in finitely generated nilpotent groups of nilpotency class $2$.

\begin{definition}
We say that $p(x_1, \ldots, x_n, y_1, \ldots, y_n)$ is a {\bf special quadratic} polynomial in  variables $x_1, \ldots,x_n$ and $ y_1, \ldots, y_m$ if
$$p(x_1, \ldots, x_n, y_1, \ldots, y_n) = \Sigma_{i,j} \alpha_{ij}x_iy_j + \Sigma_i \beta_ix_i + \Sigma_j\gamma_jy_j,
$$
where $\alpha_{i,j}, \beta_i, \gamma_j$ are constants from $\mathbb{Z}$.
\end{definition}

In a tuple notation we write $p(x,y)$, where $x = (x_1, \ldots,x_n)$ and $y = (y_1, \ldots,y_n)$.
If $\alpha$ and $\beta$ are tuples of integers, then $f(\alpha,\beta)$ denotes the value of $f$
obtained by substituting  $x \to \alpha, y \to \beta$.
Similarly, for a tuple of polynomials $f(x,y) = (f_1(x,y), \ldots, f_k(x,y))$, we write
$f(\alpha,\beta)$ to denote $(f_1(\alpha,\beta), \ldots, f_k(\alpha,\beta))$. \ The following lemma indicates the use of special quadratic polynomials in calculating the group operation in a finitely generated group of nilpotency class at most $2$.

\begin{lemma} \label{th:coordinate-functions}
Let $G$ be a finitely generated 2-nilpotent group with a lower series base $a  = (a_1, \ldots,a_n)$.
There exist  a tuple of special polynomials $f(x,y) = (f_1(x,y), \ldots, f_n(x,y))$  with $x = (x_1, \ldots,x_n)$ and $y = (y_1, \ldots,y_n)$ such that for any tuples of integers $\alpha, \beta \in \mathbb{Z}^n$ one has
$$a^\alpha\cdot a^\beta = a^{f(\alpha,\beta)}.$$
\end{lemma}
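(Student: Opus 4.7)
My plan is to compute $a^\alpha \cdot a^\beta$ by moving the letters of $a^\beta$ leftward past the letters of $a^\alpha$ one block at a time and tracking the commutator ``debris''. Let $s$ be the length of the tuple $d_1$, so that $a_1, \ldots, a_s$ project to a base of $G/G_2$ and $a_{s+1}, \ldots, a_n$ form a base of $G_2 = [G,G]$. Since $G$ is $2$-nilpotent, $G_2 \leq Z(G)$; in particular every $a_k$ with $k > s$ is central, and every commutator $[a_i, a_j]$ lies in $G_2$, hence is central. The centrality of commutators gives the standard bilinearity $[a_i^{\alpha}, a_j^{\beta}] = [a_i, a_j]^{\alpha\beta}$, which will be the workhorse of the argument.

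Starting from $a^\alpha a^\beta = a_1^{\alpha_1}\cdots a_n^{\alpha_n} \cdot a_1^{\beta_1}\cdots a_n^{\beta_n}$, I transport $a_1^{\beta_1}$ leftward through $a_n^{\alpha_n}, a_{n-1}^{\alpha_{n-1}}, \ldots, a_2^{\alpha_2}$, using
\[
a_i^{\alpha_i} a_1^{\beta_1} = a_1^{\beta_1} a_i^{\alpha_i} [a_i, a_1]^{\alpha_i \beta_1},
\]
and then merge with $a_1^{\alpha_1}$ to obtain $a_1^{\alpha_1+\beta_1}$. When $i > s$ the factor $a_i^{\alpha_i}$ is central, so no commutator appears; the only nontrivial commutators come from pairs $1 \leq j < i \leq s$. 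I iterate this procedure for $a_2^{\beta_2}, a_3^{\beta_3}, \ldots, a_s^{\beta_s}$, and finally combine the central tails $a_{s+1}^{\alpha_{s+1}}\cdots a_n^{\alpha_n}$ with $a_{s+1}^{\beta_{s+1}}\cdots a_n^{\beta_n}$ (which commute with everything). Collecting all the accumulated central commutator factors, the product becomes
\[
a^\alpha a^\beta \;=\; a_1^{\alpha_1+\beta_1}\cdots a_s^{\alpha_s+\beta_s}\, a_{s+1}^{\alpha_{s+1}+\beta_{s+1}}\cdots a_n^{\alpha_n+\beta_n} \cdot \prod_{1 \leq j < i \leq s}[a_i, a_j]^{\alpha_i \beta_j}.
\]

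Each commutator $[a_i, a_j]$ lies in $G_2 = \langle a_{s+1}, \ldots, a_n\rangle$, so it has a fixed expression $[a_i, a_j] = a_{s+1}^{c^{ij}_{s+1}}\cdots a_n^{c^{ij}_n}$ with integer constants $c^{ij}_k$; moreover, all these elements commute among themselves, so collecting exponents is straightforward. Defining
\[
f_k(x,y) \;=\; x_k + y_k \quad \text{for } k \leq s, \qquad f_k(x,y) \;=\; x_k + y_k + \sum_{1 \leq j < i \leq s} c^{ij}_k\, x_i y_j \quad \text{for } k > s,
\]
each $f_k$ is a special quadratic polynomial in the sense of the definition, and by construction $a^\alpha a^\beta = a^{f(\alpha, \beta)}$.

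Most steps are mechanical once centrality of commutators is exploited. The only delicate point is ensuring that the bilinear identity $[a_i^{\alpha_i}, a_j^{\beta_j}] = [a_i, a_j]^{\alpha_i \beta_j}$ is legitimate in the present setting and that the ``debris'' produced at each stage does not interfere with subsequent transpositions; this is guaranteed because all commutators land in $G_2 \leq Z(G)$, so they commute with everything being rearranged. No cross terms of the form $\alpha_i\alpha_j$ or $\beta_i\beta_j$ ever appear, which is what keeps $f$ within the special quadratic class.
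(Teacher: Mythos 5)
Your proof is correct and follows essentially the same route as the paper's: transpose the $\beta$-blocks leftward, exploit that all commutators lie in $G_2\leq Z(G)$ to get the bilinearity $[a_i^{\alpha_i},a_j^{\beta_j}]=[a_i,a_j]^{\alpha_i\beta_j}$, and expand the central commutator debris in the base $(a_{s+1},\ldots,a_n)$ of $[G,G]$. The only difference is presentational --- you unroll the paper's induction on the length of the base into the explicit closed-form product $\prod_{1\leq j<i\leq s}[a_i,a_j]^{\alpha_i\beta_j}$ and read off the polynomials $f_k$ directly, which is a perfectly valid (arguably cleaner) way to organize the same computation.
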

\begin{proof}
Since $G$ is a 2-nilpotent group then $G > [G,G] > 1$ is the lower central series of $G$.  In particular, $[G,G]$ is a subgroup of the center $Z(G)$ of $G$.  Let   $\omega(a)$ be the order of the lower series base $a  = (a_1, \ldots,a_n)$. By definition of the base $a$, we have that $a$  is a concatenation of two tuples $d_1 = (a_1,\ldots,a_s)$ and $d_2 = (a_{s+1}, \ldots,a_m)$ such that $d_2$ is a base of the abelian group $[G,G]$.
For tuples $\alpha = (\alpha_1, \ldots, \alpha_n), \beta = (\beta_1, \ldots, \beta_n) \in \mathbb{Z}^n$ consider the following product
\begin{equation}
\label{eq:product1}
a_1^{\alpha_1}  a_2^{\alpha_2} \ldots  a_n^{\alpha_n}  \cdot a_1^{\beta_1}a_2^{\beta_2} \ldots a_n^{\beta_n}
\end{equation}

Since $G$ is 2-nilpotent all the commutators $[a_i^{\alpha_i},a_1^{\beta_1}]$ are in the center of $G$, so using equalities $a_i^{\alpha_i}a_1^{\beta_1} = a_1^{\beta_1}a_i^{\alpha_i}[a_i^{\alpha_i},a_1^{\beta_1}]$ one can rewrite the product (\ref{eq:product1}) in the following form:
  \begin{equation}\label{eq:product1a}
a_1^{\alpha_1+\beta_1}a_2^{\alpha_2} \ldots  a_n^{\alpha_n} a_2^{\beta_2} \ldots a_n^{\beta_n} \Pi_{i= 2}^n[a_i^{\alpha_i},a_1^{\beta_1}]
\end{equation}
By induction on the length of the base, we can assume that there are special quadratic polynomials, say $g_2({\bar x},{\bar y}), \ldots, g_n({\bar x},{\bar y})$, where ${\bar x} = (x_2, \ldots,x_n) ,{\bar y} = (y_2, \ldots,y_n)$, such that
$$
a_2^{\alpha_2} \ldots  a_n^{\alpha_n} a_2^{\beta_2} \ldots a_n^{\beta_n} = a_2^{g_2(\alpha,\beta)} \ldots a_n^{g_n(\alpha,\beta)}.
$$
Notice that for 2-nilpotent groups the equalities
$$[a_i^{\alpha_i},a_1^{\beta_1}] = [a_i,a_1]^{\alpha_i\beta_1}$$
hold  for every $\alpha_i, \beta_1$. So
$$
\Pi_{i= 2}^n[a_i^{\alpha_i},a_1^{\beta_1}] = \Pi_{i= 2}^n [a_i,a_1]^{\alpha_i\beta_1}
$$
Since $[a_i,a_1] \in [G,G]$ one has $[a_i,a_1] = a_{s+1}^{\delta_{s+1,i}} \ldots a_n^{\delta_{ni}}$ for some $\delta_{ji} \in \mathbb{Z}$. Therefore
$$
\Pi_{i= 2}^n [a_i^{\alpha_i},a_1^{\beta_1}] = \Pi_{i= 2}^n (a_{s+1}^{\delta_{s+1,i}} \ldots a_n^{\delta_{ni}})^{\alpha_i\beta_1} =
$$
$$
\Pi_{i= 2}^na_{s+1}^{\delta_{s+1,i}\alpha_i\beta_1} \ldots a_n^{\delta_{ni}\alpha_i\beta_1} = a_{s+1}^{\Sigma_{i= 2}^n\delta_{s+1,i}\alpha_i\beta_1} \ldots a_n^{\Sigma_{i= 2}\delta_{ni}\alpha_i\beta_1}
$$
Observe that $h_j(x_1, \ldots,x_n,y_1) = \Sigma_{i= 2}\delta_{ji}x_iy_1$ are special quadratic polynomials in $x$ and $y$, so
$$
\Pi_{i= 2}^n [a_i^{\alpha_i},a_1^{\beta_1}] = a_{s+1}^{h_{s+1}(\alpha,\beta)} \ldots a_n^{h_n(\alpha,\beta)}
$$

Combining the latter one with the equality (\ref{eq:product1a})  one gets that the initial product (\ref{eq:product1}) is equal to
$$
a_1^{\alpha_1+\beta_1}a_2^{g_2(\alpha,\beta)} \ldots a_s^{g_s(\alpha,\beta)} a_{s+1}^{g_{s+1}(\alpha,\beta) +h_{s+1}(\alpha,\beta)}    a_n^{g_n(\alpha,\beta)+h_n(\alpha,\beta)},
$$
which proves the lemma.
\end{proof}

\begin{theorem}
Every  finitely generated group $G$ of nilpotency class at most two is graph  automatic.
\end{theorem}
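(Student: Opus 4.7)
The plan is to combine the normal form from Lemma \ref{th:coordinate-functions} with the automaticity of Presburger arithmetic to interpret the Cayley graph of $G$ in $(\Z;+)$, and then invoke Theorem \ref{FDT}.

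First, I would fix a lower central series base $a=(a_1,\ldots,a_n)$ of $G$ with orders $\omega(a)=(\omega_1,\ldots,\omega_n)$. By the coordinate lemma, every $g\in G$ is uniquely written as $g=a^{\sigma(g)}$ with $\sigma(g)\in D=\prod_i Z_{\omega_i}$. I represent elements of $G$ as convolutions $\otimes(\sigma_1,\ldots,\sigma_n)$, encoding each $\sigma_i$ in binary (with a sign bit) when $\omega_i=\infty$ and as a single symbol from $\{0,1,\ldots,\omega_i-1\}$ when $\omega_i<\infty$. The resulting set $R$ of admissible convolutions is clearly regular, and the induced map $\nu:R\to G$ is a bijection, so equality of representatives is trivially FA recognizable.

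The key step is to show that right multiplication by each generator $a_j$ is FA recognizable on $R$. By Lemma \ref{th:coordinate-functions}, $a^\alpha\cdot a_j=a^{f(\alpha,e_j)}$, where $e_j\in\Z^n$ is the $j$-th unit vector. Since each $f_i(x,y)$ is a special quadratic polynomial, substituting the constant tuple $y=e_j$ collapses the mixed terms $\alpha_{ik}x_iy_k$ and the terms $\gamma_k y_k$ into an \emph{integer-affine} function in $x=(x_1,\ldots,x_n)$. Thus the map $\alpha\mapsto f(\alpha,e_j)$ is coordinate-wise $\Z$-affine, and after reducing the $i$-th coordinate modulo $\omega_i$ in the finite components (a fixed-constant modular reduction, definable in Presburger arithmetic) we land back in $D$. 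The graph of this map is therefore first-order definable in $(\Z;+)$, and consequently FA recognizable by Theorem \ref{FDT}, which gives the FA recognizability of the edge relation $E_{a_j}$ in $\Gamma(G,\{a_1,\ldots,a_n\})$.

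Combining the two steps, the structure $(R; E_{a_1},\ldots,E_{a_n})$ under $\nu$ is isomorphic to the Cayley graph $\Gamma(G,\{a_1,\ldots,a_n\})$ and is automatic, so $G$ is graph automatic by Definition \ref{Dfn:Central}. The only real bookkeeping task is the uniform handling of mixed infinite and finite cyclic factors in $D$, but because the orders $\omega_i$ are fixed constants, the modular reductions are harmless in a Presburger interpretation. The conceptual heart of the argument is the observation, implicit in the special quadratic form of $f$, that specializing one of its two variable tuples to a constant reduces it to an affine function—so even though group multiplication is genuinely quadratic in the coordinates, multiplication by a single generator is only linear, which is exactly the regime handled by finite automata.
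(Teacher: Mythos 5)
Your torsion-free core is essentially the paper's argument: fix a central base, observe that specializing the second argument of the special quadratic polynomials $f$ from Lemma \ref{th:coordinate-functions} to a unit vector $\varepsilon_j$ collapses them to affine maps on the coordinate tuple, interpret the coordinate set and these maps in Presburger arithmetic, and invoke Theorem \ref{FDT}. The paper, however, runs this only for a \emph{torsion-free} $2$-nilpotent group with an upper central (Malcev) base, where every $\omega_i=\infty$, the coordinate domain is exactly $\Z^n$, and $f(\alpha,\varepsilon_j)$ is already the canonical coordinate tuple of $ga_j$ --- no reduction step is needed at all.

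The gap is in your handling of torsion. You claim that after computing the affine image $f(\alpha,e_j)$ one returns to the canonical domain $D=\prod_i Z_{\omega_i}$ by ``a fixed-constant modular reduction'' in the finite components. This is false in general: for a polycyclic (in particular lower central series) base, $\omega_i$ is the order of the coset $a_iG_{i+1}$ in $G_i/G_{i+1}$, so $a_i^{\omega_i}$ is an element of $G_{i+1}$ --- for $a_i$ in the $d_1$-part this is a possibly nontrivial element of $[G,G]$, not the identity. Replacing an exponent $m$ by $m\bmod\omega_i$ therefore changes the group element unless you simultaneously add $\lfloor m/\omega_i\rfloor$ times the $d_2$-coordinates of $a_i^{\omega_i}$ (and only then reduce the $d_2$-part, whose base elements genuinely satisfy $b^{\omega(b)}=1$). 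As written, your interpretation computes the wrong edge relation whenever the abelianization of $G$ has torsion that does not lift to torsion in $G$. The needed collection step is still Presburger-definable (division with remainder by a fixed constant), so your route is repairable, but the paper sidesteps the issue entirely by a case split: abelian groups; torsion-free $2$-nilpotent groups as above; and finally arbitrary finitely generated $2$-nilpotent $G$, which is a finite extension of the torsion-free subgroup $G^k$ (with $k$ the order of the finite torsion subgroup), so that Theorem \ref{Thm:FE} on finite extensions finishes the proof. You should either add the correction terms explicitly or quote the finite-extension theorem as the paper does.
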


\begin{proof} We prove the theorem by cases.

\smallskip

{\em Case 1}: If $G$ is abelian then our Example \ref{Ex:Abelian} shows that $G$ is graph automatic.

\smallskip

{\em Case 2}: Assume that $G$ be a finitely generated torsion free 2-nilpotent  group.
Fix an arbitrary upper central Malcev's base $a$ of $G$.  We use notation from Lemma \ref{th:coordinate-functions} throughout the proof. Every element $g \in G$ can be uniquely represented by its tuple of coordinates $\sigma(g)$  relative to the base $a $.  The set of coordinates of elements of $G$
$$
\sigma(G) = \{\sigma(g) \mid g \in G\} = Z_{\omega_1} \times \ldots Z{\omega_n} = \mathbb{Z}^n
$$
is in bijective correspondence with $G$. This set is clearly definable by first-order formulas in the Presburger arithmetic ${\mathcal P} = \langle \mathbb{N}, +\rangle$.  By Theorem \ref{FDT}, the set is FA recognizable.

\smallskip

Now we prove that the Cayley graph $\Gamma$ of $G$ relative to the generating set $\{a_1, \ldots,a_n\}$ is interpretable in ${\mathcal P}$.   It suffices to show that  for  a given generator $a_i$ the set of pairs
$$\{(\sigma(g),\sigma(ga_i)) \mid g \in G\}$$
 is first order interpretable in ${\mathcal P}$ for each $a_i$.  To this end let $g \in G$ and  $\sigma(g) = (\alpha_1, \ldots, \alpha_n)$.  By Lemma \ref{th:coordinate-functions} there exist  a tuple of special polynomials $f(x,y)$ such that

\begin{equation}\label{eq:product3}
 ga_i = a^{f(\sigma(g),\varepsilon_i)}
\end{equation}
 where $\varepsilon_i = (0, \ldots,0,1,0,\ldots,0)$ (all components are equal to 0, except for the $i$s, which is equal to 1).
 Therefore,
 $$\{(\sigma(g),\sigma(ga_i)) \mid g \in G\} = \{(\sigma(g),f(\sigma(g),\varepsilon_i)) \mid g \in G\}
$$
Notice, that $f(\sigma(g),\varepsilon_i) = (f_1(\sigma(g),\varepsilon_i), \ldots, f_n(\sigma(g),\varepsilon_i)$
and every $f_j(\sigma(g),\varepsilon_i)$ is a fixed linear function in $\sigma(g)$ since $f$, by Lemma \ref{th:coordinate-functions}, is a special polynomial.  each linear polynomial is first order definable in $\mathcal P$. Therefore the set
$$(\alpha, f_j(\alpha,\varepsilon_i) ) \mid \alpha \in \mathbb{Z}^n\}
$$
is first order definable in $\mathcal P$ for every $j = 1, \ldots, n$. Hence the set
$$
\{(\alpha,f(\alpha,\varepsilon_i)) \mid \alpha \in \mathbb{Z}^n\}
$$
is also first order definable in $\mathcal P$ for every $i = 1, \ldots,n$.  All these are FA recognizable by Theorem \ref{FDT}. Thus $G$ is graph automatic.

\smallskip

{\em Case 3}.  Let $G$ be an arbitrary finitely generated 2-nilpotent group.   Then the set of all torsion elements in $G$ forms a finite subgroup $T(G)$ of $G$.  If $k$ is the order of $T(G)$ then the subgroup
$ G^k $ generated by $\{g^k \mid g \in G\}$ is a finitely generated torsion-free 2-nilpotent subgroup of $G$ such that  quotient $G/G^k$ is finite.  So $G$ is a finite extension of $G^k$. By Case 2 the group $G^k$ is graph automatic.  By Theorem \ref{Thm:FE}, the original group $G$ is also graph automatic. This proves the theorem.
 \end{proof}

There are finitely generated nilpotent graph automatic groups which are not 2-nilpotent. For instance, the group  $\H_n(Z)$, where $n>3$, as proved in Example \ref{Ex:n-Heidelberg} are graph automatic. The following provide other  examples of graph automatic nilpotent groups of class $>2$.

\begin{example}
The following groups are graph automatic:
\begin{itemize}
\item Let $UT(n,\mathbb{Z})$ be the group of upper triangular matrices over $\mathbb{Z}$ (with $1$ at the diagonal).

\item The group $UT^m(n,Z)$ that consists of all matrices from $UT(n,Z)$ such that the first $m-1$ diagonals above the main one have all entries equal to  $0$.
\end{itemize}
\end{example}

\begin{proof}
The proof follows from Proposition \ref{pr:regular-subgroup-matrix}.

\end{proof}

\section{Solvable graph automatic groups}

\subsection{Baumslag-Solitar groups}
The Baumslag-Solitar groups are finitely generated one-relator groups \cite{BaumSol62}. They play an important role in combinatorial and geometric group theory. These groups have two generators $a$ and $b$
and have parameters $n,m \in \N$. For each $n,m\in \N$ the presentation of the Baumslag-Solitar group $B(m,n)$ is given by the following relation:
$$
a^{-1}b^ma=b^n.
$$
It is well-known that the groups $B(m,n)$, for $m\neq n$, are not automatic, and  when $n=m$, the groups $B(n,m)$ are automatic \cite{Thurston}. It is also known that the Baumslag-Solitar groups are all asynchronously automatic \cite{Thurston}. In this section we prove that the Baumslag-Solitar groups $B(1,n)$ are graph automatic groups for all $n\in \N$.

\begin{theorem}\label{Thm: B(1,n)}
The Baumslag-Solitar groups $B(1,n)$ are graph automatic for $n\in \N$.
\end{theorem}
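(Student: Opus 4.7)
The plan is to describe the Cayley graph of $B(1,n)$ directly by finite automata, using a canonical normal form for group elements. I would start from the standard isomorphism $B(1,n) \cong \Z[1/n] \rtimes_n \Z$, where $\Z$ acts on $\Z[1/n]$ by $k\cdot r = r/n^k$, with $b \leftrightarrow (1,0)$ and $a \leftrightarrow (0,1)$. Every element of $B(1,n)$ then has a unique representation as a triple $(p, m, k)$ with $p \geq 0$, $m, k \in \Z$, and either $p = 0$ or $n \nmid m$, corresponding to the pair $(m/n^p, k)$.

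I would encode such a triple by convolving the unary string $1^p$, a signed base-$n$ representation of $m$ (least-significant-digit-first, zero-padded to length at least $p+1$), and a signed base-$n$ representation of $k$. The set $R$ of valid encodings is FA-recognizable: the only nontrivial constraint, namely that the base-$n$ digit of $m$ at position $p$ is nonzero whenever $p > 0$, is checked by the automaton at the position where the unary marker $1^p$ ends.

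The main step is to verify that right-multiplication by each generator $g \in \{a^{\pm 1}, b^{\pm 1}\}$ is FA-recognizable on $R$. For $g = a^{\pm 1}$, one has $(m/n^p, k) \cdot g = (m/n^p, k \pm 1)$, which only modifies the signed base-$n$ string for $k$ and is plainly regular. For $g = b^{\pm 1}$, one has $(m/n^p, k)\cdot g = (m/n^p \pm n^{-k}, k)$, and a brief case analysis on the sign of $k-p$ yields explicit formulas: if $k \leq p$, the new triple is $(p,\, m \pm n^{p-k},\, k)$; if $k > p$, the new triple is $(k,\, m\,n^{k-p} \pm 1,\, k)$, in which case the normal-form condition $n \nmid m'$ is automatic because $m\,n^{k-p} \pm 1 \equiv \pm 1 \pmod n$. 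In the boundary subcase $k = p > 0$ of the first case, the result $m' = m \pm 1$ may become divisible by $n$, and one must strip a (possibly long) string of trailing zeros from $m'$ in base $n$, decreasing $p'$ accordingly; this is still a regular operation on the convolution of input and output, because the automaton can detect the longest trailing run of zeros and re-align the unary $p'$ marker at the new position. Each subcase is thus a regular binary relation on $R$, and their union gives a single automaton for $E_{b^{\pm 1}}$.

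Combining these, $(R, E_a, E_{a^{-1}}, E_b, E_{b^{-1}})$ is an FA-presentation of $\Gamma(B(1,n), \{a,b\})$, so $B(1,n)$ is graph automatic by Definition \ref{Dfn:Central}. The only mildly technical obstacle is the boundary subcase $k = p > 0$ of $b^{\pm 1}$, where a single $\pm 1$ at the units position can cascade through many trailing zeros of $m$ and force a non-trivial reduction of the denominator; but this is a standard regularity-of-base-$n$-arithmetic argument and presents no genuine difficulty. (Alternatively, one could argue less explicitly by interpreting $\Gamma(B(1,n),\{a,b\})$ by first-order formulas in the automatic structure $(\N; +, |_n)$ via the coordinates $(p,m,k)$ and appealing to Theorem \ref{FDT}; the direct automaton construction above is included because it gives tight control over the encoding.)
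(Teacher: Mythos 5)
Your overall strategy---normal forms for $\Z[1/n]\rtimes\Z$ together with explicit automata for the generator actions---is the right one and is close in spirit to the paper's, but the encoding you chose makes $E_{b^{\pm1}}$ non-regular, and the failure is not in the boundary subcase you flag. The problem is that you write the $\Z$-coordinate $k$ in base $n$, while right multiplication by $b^{\pm1}$ adds $n^{-k}$: the effect of this operation on the string depends on the \emph{value} of $k$, not on its base-$n$ numeral. Concretely, take $g=a^k=(0,k)$ with $k>0$ large: its code $\con(\lambda,0,k)$ has length $O(\log k)$, while $gb=(n^{-k},k)$ has normal form $(p',m',k)=(k,1,k)$ and hence code $\con(1^k,1,k)$ of length $k$. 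So the function ``right-multiply by $b$'' sends a string of length $O(\log k)$ to one of length $k$, contradicting the Constant Growth Lemma (Lemma \ref{L:CGL}); its graph therefore cannot be FA recognizable in your encoding, and your subcase ``$k>p$: the new triple is $(k, mn^{k-p}\pm1, k)$'' is exactly where this happens. The subcase $k\le p$ fails for the same underlying reason: verifying $m'=m\pm n^{p-k}$ forces the automaton to match the position $p-k$ inside $m$ against the base-$n$ numeral of $k$, and the language of convolutions of $1^i$ with the base-$n$ representation of $i$ is not regular. By contrast, the cascading-carry subcase $k=p>0$, which you single out as the only technical obstacle, really is harmless. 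The parenthetical alternative via $(\N;+,|_n)$ inherits the same defect so long as $k$ enters as an exponent, since $k\mapsto n^k$ is not an automatic function.

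The fix is a different choice of coordinates, and it is the one the paper uses: represent $g$ by the affine map $x\mapsto n^N x+m/n^p$ that it induces on $\R$, with the $\Z$-coordinate $N$ and the numerator $m$ written positionally but the denominator exponent $p$ written in unary. In these coordinates right multiplication by $b$ is untwisted---it sends $(N,m,p)$ to $(N,m+n^p,p)$, and position $p$ of $m$ can be read off the unary block---while right multiplication by $a$ sends $(N,m,p)$ to $(N+1,m,p-1)$ (or to $(N+1,nm,0)$ when $p=0$), both plainly regular and both length-increasing by at most a constant. In short: keep the denominator exponent in unary, keep the $\Z$-coordinate in a positional base, and arrange the conventions so that the twist falls on the $a$-action (a shift of the unary block) rather than on the $b$-action.
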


\begin{proof}
To simplify our exposition we consider the group $B(1,2)$. We prove this theorem through the action of this group on the real line $\R$. We represent the elements $a$ and $b$ of the group $B(1,2)$ as the linear functions $g_a:\R \rightarrow \R$ and
$g_b: \R \rightarrow \R$ given by $g_a(x)=2x$ and $g_b(x)=x+1$.  Let $G$ be the group generated by the linear functions $g_a$ and $g_b$. The group operation in $G$ is the composition of functions. Our goal is to show that the group $G$ is isomorphic to $B(1,2)$ via the isomorphism induced by the mapping $a\rightarrow g_a$ and $b\rightarrow g_b$.

\begin{claim}
The elements $g_a$ and $g_b$ satisfy the identity $g_a^{-1} g_b g_a=g_b^2$.
\end{claim}

Indeed, given a real number $x\in \R$, we have the following equalities:
 $$
 g_a^{-1} g_b g_a (x)=g_b g_a (\frac{1}{2} x)=g_a(\frac{1}{2}x+1)=x+2=g_b^2(x).
 $$



\medskip

For the next claim recall that $\Z[1/2]$ is the set of all dyadic numbers, that is numbers of the form
$i/2^j$, where $i,j \in \Z$.

\begin{claim}
Each $g\in G$ is a linear function of the form $ax+b$, where $a=2^n$ and $b\in \Z[1/2]$.
\end{claim}

The proof of the claim is  by induction on the length of words over $a,b$ representing elements of $G$. For $g_a$ and $g_b$ the claim is obvious.
Suppose $g\in G$ is of a desired  form $2^n x+m/2^k$. We need to show that the functions
$gg_a$, $g g_a^{-1}$, $gg_b$, and $gg_b^{-1}$ are also of the desired form. But this  can
be shown  through easy calculations. For instance,
\begin{center}{
$gg_a(x)=g_a(g(x))=g_a(2^n x+m/2^k)=2^{n+1}+m/2^{k-1}$, and \\
$gg_b(x)=g_b(g(x))=g_b(2^n x+m/2^k)=2^{n+1}+(m+2^k) /2^{k}$.
}
\end{center}

The next claim shows that every function of the form $2^nx+m/2^k$ can be generated through the base functions $g_a$ and $g_b$. This reverses the claim above.

\begin{claim}
Assume that $g$ is a function of the form $2^nx+m/2^k$. Then $g=g_a^n(g_a^{k} g_b^m g_a^{-k})$.
\end{claim}

Indeed, first note the following equality:
$$
g_a^{k} g_b^m g_a^{-k}(x)=g_b^m g_a^{-k} (2^k x)=g_a^{-k} (2^kx+m)=x+m/2^k.
$$
Now it is easy to see that
$$
g^n g_a^{k} g_b^m g_a^{-k}(x)=g_a^{k} g_b^m g_a^{-k}(2^nx)=2^nx+m/2^k=g
$$

These claims show that the groups $B(1,2)$ and $G$ are isomorphic. The isomorphism is induced by the mapping $a\rightarrow g_a$ and $b\rightarrow g_b$. So, we identify these two groups.

\smallskip

Now we give a representation of the Cayley graph for $B(1,2)$ with the generators $a$ and $b$. Consider a function $g\in G$ of the form $2^nx+m/2^k$, where $k\geq 0$. We can always assume that $m$ is odd if $k>0$. Thus, we can represent the element $g$ as the (convoluted) string $\con (n,m,k)$. We put the following conditions on these strings:
\begin{enumerate}
\item $n$ and $m$ are integers written in binary.
\item The integer $k$ is written in  unary.
\item If $k$ is the empty string (thus $k$ represents $0$), then $m\in \Z$. Otherwise, $m$ is odd.
\end{enumerate}
We denote this set of strings by $D$. It is clear that $D$ is finite automata recognizable set. It is also clear that the mapping $D\rightarrow G$ given by $(n,m,k)\rightarrow 2^nx+m/2^k$ is a bijection.

\smallskip

The multiplication by generators $g_a$ and $g_b$ of elements $g=2^nx+m/2^k$ of $G$ is now represented on $D$ as follows:
$$
(n,m,k) \rightarrow_a (n+1,m,k-1)  \ \mbox{and} \ (n,m, k) \rightarrow_b (n,m+2^k, k).
$$
It is clear that the multiplication by $a$ is recognized by finite automata. The multiplication by $b$ is also finite automata recognizable because $k$ is represented in unary.
\end{proof}

\subsection{Other metabelian groups}

We have shown in Section \ref{se:semidirect} that the groups $G = (\Z \times \Z)\rtimes_A \Z$ are graph automatic, where  $A \in SL(2,\Z)$. This can clearly be generalized to higher dimensions:

\begin{proposition}
A group $G = \Z^n \rtimes_A \Z$, where $A \in SL(n,\Z)$ is graph automatic.
\end{proposition}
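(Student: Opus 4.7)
The plan is to imitate the argument given for Proposition \ref{pr:sol} (the case $n=2$) and reduce everything to Theorem \ref{Thm:Semidirect} on semidirect products. Since both $\Z^n$ (as a finitely generated abelian group, see Example \ref{Ex:Abelian}) and $\Z$ are graph automatic, the only nontrivial hypothesis of Theorem \ref{Thm:Semidirect} that requires verification is that the homomorphism $\tau : \Z \to \mathrm{Aut}(\Z^n)$ given by the matrix $A$ sends a generator of $\Z$ to an FA recognizable automorphism of $\Z^n$.

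First I would fix the natural automatic representation of $\Z^n$: represent a vector $v = (v_1,\ldots,v_n) \in \Z^n$ by the convolution $\otimes(v_1,\ldots,v_n)$, where each integer $v_i$ is written in binary (say, least significant bit first, with a sign bit). The generator $1 \in \Z$ acts on $\Z^n$ via the linear map $v \mapsto Av$. Since $A \in SL(n,\Z)$ is a \emph{fixed} integer matrix, each coordinate of $Av$ is a fixed $\Z$-linear combination $\sum_j a_{ij}v_j$ of the coordinates of $v$. Such a relation is first-order definable in Presburger arithmetic $(\N;+)$, hence FA recognizable by Theorem \ref{FDT}; equivalently, one can build an explicit synchronous multitape automaton that performs integer addition and scaling by a constant on binary inputs. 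Thus the graph of $\tau(1)$ is regular, so $\tau(1)$ is an automatic automorphism.

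With this in hand, Theorem \ref{Thm:Semidirect} applied to $A = \Z^n$, $B = \Z$, and the chosen $\tau$ gives immediately that $G = \Z^n \rtimes_A \Z$ is graph automatic.

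If one prefers a direct argument parallel to the alternative proof of Proposition \ref{pr:sol}, one can instead interpret $\Gamma(G,X)$ directly in $(\Z;+)$. Represent elements of $G$ as tuples $(v,t) \in \Z^{n+1}$; the set of all such tuples is trivially FA recognizable. The multiplication rule is
$$
(v_1,t_1)(v_2,t_2) \;=\; (v_1 + A^{t_1}v_2,\; t_1+t_2),
$$
and for the finitely many generators $(e_i,0)$ and $(0,\pm 1)$ the value $A^{t_1}v_2$ is either $v_2$, $Av_2$ or $A^{-1}v_2$, each of which is a fixed $\Z$-linear function of the coordinates of $(v_1,v_2,t_1)$. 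Hence each predicate ``right-multiplication by a generator'' is first-order definable in Presburger arithmetic, and again Theorem \ref{FDT} yields FA recognizability.

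The only place one needs to be a bit careful is the step that multiplication by a fixed integer matrix is an FA recognizable operation on binary-encoded integer tuples; but this is a standard fact (scaling a binary integer by a constant and adding binary integers are both regular), so there is no real obstacle. Everything else is a direct invocation of already-proved results.
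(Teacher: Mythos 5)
Your proposal is correct and follows exactly the route the paper takes: the paper's proof is the one-line remark that the argument of Proposition \ref{pr:sol} applies, and that argument is precisely your two observations (a fixed integer matrix induces an FA recognizable automorphism of $\Z^n$, so Theorem \ref{Thm:Semidirect} applies; alternatively, the Cayley graph is first-order interpretable in Presburger arithmetic). You have simply written out the details the paper leaves implicit.
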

\begin{proof}
The argument from Proposition \ref{pr:sol} can be applied here as well.
\end{proof}

\subsection{Non-metabelian solvable groups}

Let $T(n,\Z)$ be the group of triangular matrices of size $n \times n$  over the intgeres  $\Z$.
So, matrices in $T(n,\Z)$ have all zeros below the main diagonal.

\begin{proposition}
The group $T(n,\Z)$ is graph automatic.
\end{proposition}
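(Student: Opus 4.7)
The plan is to apply Corollary \ref{pr:regular-subgroup-matrix} directly: it suffices to exhibit $T(n,\Z)$ as a finitely generated regular subgroup of the multiplicative monoid $M_n(\Z)$ under the natural encoding already used in the $UT(n,\Z)$ example.

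First I would fix the encoding of $M_n(\Z)$ in which a matrix $M=(m_{ij})$ is represented by the convolution of its $n^2$ entries $m_{ij}$ written in signed binary. Under this encoding the subset $T(n,\Z)$ consists of those matrices with $m_{ij}=0$ for $i>j$ and with $m_{ii}\in\{-1,+1\}$ for every $i$ (the latter forced by invertibility of the diagonal over $\Z$). Each of these is a position-wise constraint on the convolution, so the corresponding language over the convolution alphabet is recognized by a product of very simple automata and hence is regular.

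Next I would check finite generation. A convenient generating set is
$\{E_{ij}\mid 1\le i<j\le n\}\cup\{D_1,\ldots,D_n\}$,
where $E_{ij}=I+e_{ij}$ is the elementary unitriangular matrix with a single $1$ at position $(i,j)$ above the diagonal, and $D_i$ is the diagonal matrix equal to $-1$ at position $(i,i)$ and $+1$ elsewhere. The $E_{ij}$ generate $UT(n,\Z)$ as in the earlier example, the $D_i$ generate the finite diagonal group $\{\pm1\}^n$, and every $M\in T(n,\Z)$ factors as a unitriangular matrix times a $\pm1$-diagonal matrix. With both hypotheses of Corollary \ref{pr:regular-subgroup-matrix} verified, graph automaticity of $T(n,\Z)$ follows at once.

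As a cross-check, I would note that the same conclusion is also available through Theorem \ref{Thm:FE}: the exact sequence
$1\to UT(n,\Z)\to T(n,\Z)\to\{\pm1\}^n\to 1$
exhibits $T(n,\Z)$ as a finite extension of the graph automatic group $UT(n,\Z)$. No step here presents a genuine obstacle; the substantive work has already been absorbed into Corollary \ref{pr:regular-subgroup-matrix}, which guarantees that multiplication by any fixed integer matrix in $M_n(\Z)$ is realizable by a finite automaton.
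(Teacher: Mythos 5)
Your proof is correct and takes the same route as the paper, which simply cites Corollary \ref{pr:regular-subgroup-matrix}; you have merely filled in the details the paper leaves implicit (regularity of the entry-wise constraints forcing zeros below the diagonal and $\pm 1$ on it, and finite generation via the $E_{ij}$ together with the sign-diagonal matrices). Your cross-check via Theorem \ref{Thm:FE} and the extension $1\to UT(n,\Z)\to T(n,\Z)\to\{\pm1\}^n\to 1$ is also valid but is not the argument the paper uses.
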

\begin{proof}
This follows from Proposition \ref{pr:regular-subgroup-matrix}.
\end{proof}

Another interesting example comes from Theorem \ref{th:wreath}.

\begin{example}
Let $K$ be a solvable finite group.
Then by Theorem \ref{th:wreath} the wreath product   $K$ by the group  $\Z$ is Cayley graph
automatic. It is clear that $G$ is solvable of the solvability class at least the class of $K$.
\end{example}

\section{Proving non-automaticity}

In this section we discuss the issue of building non graph automatic groups.
Graph automatic groups, as we have proved, have decidable word problem.
Therefore, all finitely generated groups with undecidable word problem are obviously not graph
automatic. By Theorem \ref{FDT}, it is clear that if a structure $\A$ has undecidable first order theory then
$\A$ is not automatic. This suggests the following idea to construct a non graph automatic group  with decidable word problem. Search for a group $G$ with solvable word problem such that the first order theory of one of its Cayley graphs is not decidable.
But Theorem \ref{CayleyDecidable} prohibits the existence of such groups.
This observation calls for finding more sophisticated methods for proving non graph automaticity of groups.  Below we provide one such simple method.
\smallskip

The next lemma puts a significant restriction on functions in automatic structures.

\begin{lemma}[Constant Growth Lemma] \label{L:CGL}  Let $f:D^n\rightarrow D$ be a function on $D\subseteq \Sigma^\star$ such that the graph of $f$ is FA recognizable. There exists a constant $C$ such that for all $x_1,\ldots, x_n\in D$ we have
$$
|f(x_1,\ldots, x_n)| \leq max\{|x_i| \mid i=1,\ldots, n\}+C.
$$
\end{lemma}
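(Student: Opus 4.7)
The plan is to use a pumping argument on the automaton recognizing the graph of $f$. Let $\M$ be a deterministic finite automaton recognizing $\otimes \Graph(f)$ over the alphabet $(\Sigma_\diamond)^{n+1}$, and let $C$ be the number of states of $\M$. I claim $C$ is the desired constant.

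Suppose for contradiction there exist $x_1,\ldots,x_n \in D$ such that $y = f(x_1,\ldots,x_n)$ satisfies $|y| > m + C$, where $m = \max_i |x_i|$. Consider the accepting run of $\M$ on the convolution $w = \otimes(x_1,\ldots,x_n,y)$, whose length is $|y|$. The first $m$ symbols of $w$ may contain actual letters of $\Sigma$ in the first $n$ tracks, but by position $m+1$ all of the first $n$ tracks have been exhausted, so every remaining symbol of $w$ has $\diamond$ in the first $n$ coordinates and some letter of $\Sigma$ (the corresponding letter of $y$) in the last coordinate. The suffix of the run reading these last $|y| - m > C$ symbols visits more than $C$ states in $\M$, so by the pigeonhole principle two of the visited states coincide.

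This gives a loop in $\M$ on a nonempty block of symbols of the form $(\diamond,\ldots,\diamond,\sigma_{i+1},\ldots,\sigma_j)$ whose last coordinates spell some nonempty string $u$ over $\Sigma$. Pumping the loop (either inserting it once more, or, if one prefers, deleting it) yields another accepted word whose first $n$ tracks still convolute $x_1,\ldots,x_n$ but whose last track is a different string $y' \neq y$. Hence both $(x_1,\ldots,x_n,y)$ and $(x_1,\ldots,x_n,y')$ lie in $\Graph(f)$, contradicting the fact that $f$ is a function.

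The main (and essentially only) subtle point is making sure that the pumped loop sits entirely inside the portion of $w$ where the first $n$ tracks are already saturated with $\diamond$'s; this is exactly why the threshold is taken with respect to $\max_i |x_i|$ rather than, say, $|y|$, and why we need $|y| > m + C$ rather than just $|y| > C$. Once the loop is safely inside this tail, the first $n$ tracks of the convolution are unchanged by pumping, and the contradiction with functionality is immediate.
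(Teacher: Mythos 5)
Your proof is correct and follows essentially the same route as the paper's: both pump the accepting run of the graph automaton on the tail of the convolution beyond position $\max_i|x_i|$, where only the output track carries letters of $\Sigma$, to produce a second accepted output for the same input and contradict functionality. The only difference is cosmetic --- you take $C$ to be the number of states of the graph automaton, whereas the paper uses the product of the state counts of the graph and domain automata so that the pumped string also lies in $D$, a precaution your argument shows is not actually needed for the contradiction.
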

\begin{proof}
Let $C_1$, $C_2$ be the number of  states of  finite automata recognizing the graph of the function $f$ and the domain $D$, respectively.
Assume that there exist $x_1,\ldots, x_n\in D$ such that
$$
|f(x_1,\ldots, x_n)| > max\{|x_i| \mid i=1,\ldots, n\}+C_1\cdot C_2.
$$
Let $y=f(x_1,\ldots, x_n)$. We can write $y$ as
$$
y=\con(x_1,\ldots, x_n,z) \cdot \con (\lambda, \ldots, \lambda,z')
$$
where $|z|=max\{|x_i| \mid i=1,\ldots, n\}$. Since $\M$ accepts $\con(x_1,\ldots, x_n,y)$, by the Pumping Lemma $z$ can be pumped to a longer string $u\in \Sigma^\star$ such that $u\neq z$, $\con(x_1,\ldots, x_n,u)$ is accepted by $\M$ and $u\in D$. But $f$ is a function $D$. The desired
$C$ is $C_1\cdot C_2$.
\end{proof}

Let $\A$ be an automatic structure with atomic functions  $f_1$, $\ldots$, $f_m$. For a set $E=\{e_1,\ldots, e_k\}$ of elements of the structure define the following sequence by induction:
$$
G_1(E)=E, \ G_{n+1}(E)=G_n(E) \cup \{f_i(\bar{a}) \mid \bar{a} \in G_n(E), i=1,\ldots,m\}, \ n>0.
$$
\begin{theorem}[Growth of Generation Theorem] In the setting above, there exists a constant $C$ such that $|a|\leq C\cdot n$ for all  $a\in G_n(E)$. Hence, for all $n\geq 1$
$$
|G_n(E)| \leq |\Sigma|^{C\cdot n} \ \ \ \mbox{if $|\Sigma|>1$}.
$$
If $|\Sigma|=1$ then $|G_n(E)|\leq C\cdot n$.
\end{theorem}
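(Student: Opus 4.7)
The plan is to prove the length bound by induction on $n$, iterating the Constant Growth Lemma (Lemma \ref{L:CGL}), and then to derive the cardinality bound from the length bound by a direct counting argument.

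For the length bound, first I would apply Lemma \ref{L:CGL} to each atomic function $f_i$ to obtain a constant $C_i$ such that $|f_i(a_1, \ldots, a_{n_i})| \leq \max_j |a_j| + C_i$, and set $C' = \max_{1 \leq i \leq m} C_i$. Let $M = \max_{e \in E} |e|$. I claim by induction on $n \geq 1$ that every $a \in G_n(E)$ satisfies $|a| \leq M + C'(n-1)$. The base case $n = 1$ is immediate. For the inductive step, if $a \in G_{n+1}(E)$, then either $a \in G_n(E)$ and the induction hypothesis applies, or $a = f_i(\bar b)$ for some $\bar b$ with entries in $G_n(E)$. In the latter case, $|a| \leq \max_j |b_j| + C' \leq (M + C'(n-1)) + C' = M + C'n$, completing the induction. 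Choosing any constant $C \geq M + C'$ then yields $|a| \leq C \cdot n$ for all $a \in G_n(E)$ and $n \geq 1$.

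For the cardinality bound, I would note that $G_n(E)$ is contained in the set of strings of length at most $C \cdot n$ over the alphabet $\Sigma$ (relative to the domain of discourse). When $|\Sigma| > 1$, the total number of such strings is $\sum_{k=0}^{Cn} |\Sigma|^k = \frac{|\Sigma|^{Cn+1} - 1}{|\Sigma| - 1}$, which is bounded by $|\Sigma|^{C'' \cdot n}$ for a slightly enlarged constant $C''$; absorbing this into $C$ gives the claimed bound $|G_n(E)| \leq |\Sigma|^{C \cdot n}$. When $|\Sigma| = 1$, the number of strings of length at most $Cn$ is exactly $Cn + 1$, so after again absorbing the additive constant into $C$ we obtain $|G_n(E)| \leq C \cdot n$.

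The argument is essentially routine once one has Lemma \ref{L:CGL} in hand; there is no serious obstacle, only the bookkeeping of constants. The only subtle point is ensuring that the Constant Growth Lemma applies uniformly to every atomic function (so that one single constant $C'$ controls every step of the generation), and that the chosen $C$ absorbs both the initial size $M$ and the per-step increment $C'$ simultaneously, which is why taking $C \geq M + C'$ suffices.
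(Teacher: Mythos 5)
Your proof is correct and follows essentially the same route as the paper: iterate the Constant Growth Lemma by induction on $n$ to get the linear length bound, then count strings of length at most $C\cdot n$ over $\Sigma$. The only difference is bookkeeping --- the paper folds the generator lengths $|e_j|$ and the function constants $C_i$ into a single maximum $C'$ and takes $C=C'+1$, whereas you keep them separate and are slightly more careful about absorbing the additive slack in the final counting step; both are fine.
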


\begin{proof}
Let $C_i$ be the constant stated in the previous lemma for the function $f_i$. Let $C'=max\{C_1,\ldots, C_m, |e_1|, \ldots, |e_k|\}$. By the lemma above, using induction on $n$ one can easily prove that for all $a\in G_n(E)$ we have $|a|\leq (C'+1) \cdot n$.
Set $C=C'+1$. Now we clearly have $G_n(E) \subseteq \Sigma^{\leq C\cdot n}$. Hence the theorem is proved.
\end{proof}

The result above can be applied to provide many examples of non automatic structures.
For instance,  the following structures do not have automatic presentations:
\begin{itemize}
\item The semigroup $(\Sigma^\star; \cdot)$.
\item The structure $(\omega; f)$, where $f:\omega^2 \rightarrow \omega$ is a pairing function (that is a bijection between $\omega^2$ and $\omega$).
\item The free group $F(n)$ with $n$ generators, where $n>1$.
\item The structures $(\omega; Div)$ and $(\omega; \times)$. \qed
\end{itemize}
For reference see \cite{KN95} \cite{BG00}.

\smallskip

For groups, the theorem above implies the following corollary:




\begin{corollary}
Finitely generated groups whose word problem can not be solved in quadratic time are not graph automatic. \qed
\end{corollary}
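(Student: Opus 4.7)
The plan is to derive this corollary as a contrapositive of the fact (already established as a theorem in this paper) that every graph automatic group has word problem decidable in quadratic time. However, since the corollary is stated immediately after the Growth of Generation Theorem, I would make the derivation explicit in a way that highlights why the machinery of the previous section actually yields the quadratic bound a second time, now from a more structural angle.

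First I would fix a graph automatic group $G$ with finite generating set $X$ and regular presentation $\nu : R \to G$ where $R \subseteq \Sigma^\ast$. For each $x \in X$, right multiplication $r \mapsto \nu^{-1}(\nu(r)\cdot x)$ is a function on $R$ whose graph is FA recognizable by Lemma \ref{L:Product}. Applied to the set $E = \{\nu^{-1}(1_G)\}$ and to these $|X|$ unary functions, the Growth of Generation Theorem produces a constant $C$ such that every element of $G$ representable by a word $w \in X^\ast$ of length at most $n$ has its representative $\nu^{-1}(\overline{w})$ of length at most $Cn$ in $R$.

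Next I would use the linear-time evaluation lemma from the proof of the quadratic word-problem theorem: given $r \in R$ and $x \in X$, one can compute $\nu^{-1}(\nu(r)x)$ in time linear in $|r|$. Therefore, given any input word $w = \sigma_1\cdots\sigma_n$ over $X$, successively computing the representatives of $\sigma_1, \sigma_1\sigma_2, \ldots, \sigma_1\cdots\sigma_n$ takes total time $\sum_{i=1}^n O(Ci) = O(n^2)$. To decide $\overline{u} = \overline{v}$ for $u,v \in X^\ast$ of length $\le n$, compute the two representatives in $O(n^2)$ time and then test string equality, which is linear in the length $O(n)$ of those representatives. Thus the word problem of $G$ is decidable in quadratic time.

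Taking the contrapositive yields the corollary: if the word problem of $G$ cannot be solved in quadratic time, then $G$ cannot be graph automatic. There is no real obstacle here; the only point that requires a bit of care is that the Growth of Generation Theorem must be applied to the right family of functions, namely the right-multiplication maps on $R$ indexed by a finite generating set, in order to get a uniform linear bound $|\nu^{-1}(\overline{w})| \le C|w|$ on representative lengths. Once this bound is in hand, the quadratic algorithm and hence the corollary are immediate.
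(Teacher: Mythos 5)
Your proposal is correct and follows essentially the same route as the paper: the corollary is just the contrapositive of the theorem that the word problem in graph automatic groups is decidable in quadratic time, which the paper states with \qed as immediate. Your extra work re-deriving the quadratic bound via the Growth of Generation Theorem (for the linear length bound on representatives) and the linear-time evaluation lemma is a faithful, more explicit reconstruction of the argument already given in the word-problem theorem, not a genuinely different proof.
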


Of course, there are groups whose word problems can not be decided in  quadratic time. For instance, see \cite{Sapir2010}. However, the authors do not know of any natural example of such groups.



\section{Finitely generated FA presentable groups}
\label{se:FAgroups}

The Definition \ref{CentralDefinition} suggests that automaticity into groups can also be introduced
by requiring that the group operation is FA recognizable. In this section we do exactly this by  considering
groups $(G, \cdot)$ in which the group operation $\cdot$ is automatic. We recast the definition:

\begin{definition}
We say that a group $G$ is {\bf FA presentable} if  the following conditions are satisfied:
\begin{itemize}
\item The domain of $G$ is FA recognizable set.
\item The graph of the group operation, that is, the set $\{(u,v,w) \mid u\cdot v=w\}$
is FA recognizable.
\end{itemize}
\end{definition}

Note that the definition does not require that $G$ is finitely generated. Examples of
FA prsentable  groups are the following:
\begin{itemize}
\item The additive group of $p$-adic rational numbers: $\Z[1/p]$.
\item Finitely generated Abelian groups.
\item The infinite direct sum $\bigoplus G$ of a finite group $G$.
\end{itemize}
For abelian (not necessarily finitely generated) FA-presentable groups see \cite{Nies07} \cite{NS09} \cite{Ts09}.
We also mention a recent result of Tsankov that the additive group of rational numbers is not FA presentable.
The proof uses advanced techniques of additive combinatorics \cite{Ts09}.
\smallskip

Our goal is to give a full characterization of finitely generated FA-presentable groups.
Our proof follows Thomas and Oliver \cite{RickOliver05}. We start with the following definition.

\begin{definition}
An infinite group is {\bf virtually Abelian} if it has torsion free normal Abelian subgroup
of finite index.
\end{definition}

An example of virtually Abelian group is $D_{\omega}$, the infinite dihedral group.
One can view this group as the automorphism group of the graph that looks like the
bi-infinite chain.

\begin{lemma} \label{L:GR-Groups}
Finitely generated virtually Abelian groups all are FA presentable.
\end{lemma}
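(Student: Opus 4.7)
The plan is to leverage the fact that $\Z^n$ with addition is FA presentable (being first-order definable in Presburger arithmetic, which is automatic by Example \ref{Ex:Presburger} and Theorem \ref{FDT}), and then promote this to an arbitrary finite extension by a normal form argument based on cosets.

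Let $G$ be finitely generated virtually abelian, with a torsion-free normal abelian subgroup $A \trianglelefteq G$ of finite index. First I would note that $A$ is finitely generated (a finite-index subgroup of a finitely generated group is finitely generated), and since it is torsion-free abelian, $A \cong \Z^n$ for some $n$. Fix coset representatives $t_0 = e, t_1, \ldots, t_{r-1}$ so that every $g \in G$ has a unique decomposition $g = a \cdot t_i$ with $a \in A$, $0 \leq i \leq r-1$. I would represent elements of $G$ as convolutions $\otimes(a, i)$, where $a \in \Z^n$ is written as an $n$-tuple of binary integers and $i$ is a symbol from a finite alphabet of size $r$. The resulting domain is clearly FA recognizable.

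For the multiplication, the key observation is that normality of $A$ gives, for each $i$, an automorphism $\sigma_i \in \mathrm{Aut}(\Z^n)$ defined by $\sigma_i(b) = t_i b t_i^{-1}$, realized by an integer matrix $M_i \in GL(n,\Z)$. Likewise $t_i t_j = c_{ij}\cdot t_{\phi(i,j)}$ for a finite table of constants $c_{ij} \in \Z^n$ and indices $\phi(i,j)$. Then
\[
(a \cdot t_i)(b \cdot t_j) \;=\; \bigl(a + M_i b + c_{ij}\bigr)\cdot t_{\phi(i,j)}.
\]
Since $i, j$ range over a finite set, the graph of multiplication splits into finitely many pieces, one per pair $(i,j)$; it suffices to show each piece is FA recognizable. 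Each piece is defined by the ternary relation $\{(a, b, a + M_i b + c_{ij}) : a, b \in \Z^n\}$ with a \emph{fixed} matrix $M_i$ and a \emph{fixed} constant $c_{ij}$.

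The main step, therefore, is to check that for a fixed $M \in GL(n,\Z)$ and fixed $c \in \Z^n$, the relation $\{(a, b, a + Mb + c) : a, b \in \Z^n\}$ is FA recognizable. This relation is first-order definable in $(\Z; +)$: each coordinate of $Mb$ is a fixed integer linear combination of coordinates of $b$, which is expressible in Presburger arithmetic using only addition and constants. Since $(\Z; +)$ is automatic and FA recognizability is closed under first-order interpretation (Theorem \ref{FDT}), the relation is FA recognizable, as required. Combining the finitely many pieces (FA recognizable languages are closed under union) produces an automaton for the full multiplication, proving $G$ is FA presentable. The main conceptual obstacle is simply setting up the normal form and recognizing that the finite bookkeeping for cosets does not leave the FA framework; the arithmetic portion is handled uniformly by interpretability in Presburger arithmetic.
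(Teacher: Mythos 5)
Your proposal is correct and follows essentially the same route as the paper: decompose $G$ into finitely many cosets of a torsion-free abelian normal subgroup $A\cong\Z^n$, observe that normality makes conjugation by each coset representative an integer-matrix automorphism so that multiplication becomes an affine map on coordinates for each fixed pair of representatives, and conclude FA recognizability of each of the finitely many pieces. The only cosmetic differences are that you place the coset representative on the right rather than the left and justify the recognizability of the affine maps explicitly via definability in Presburger arithmetic and Theorem \ref{FDT}, where the paper simply asserts that the resulting coordinate operations are automatic.
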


\begin{proof}
Let $G$ be a finitely generated virtually Abelian group. By the definition, there exists an Abelian torsion free normal subgroup $A$ of $G$ which has a finite index, say $n$,
in $\G$.
We can assume that $\A$ is isomorphic to $\Z^k$. Let
$x_1$, $\ldots$, $x_k$ be the generators of $\Z^k$.
Without loss of generality, to avoid notations, we assume that $k=2$.

\smallskip

Let $t_1$, $\ldots$, $t_n$ be all representatives of the quotient group  $G/\Z^k$.
Every element $g\in G$ can be written as \
$t_i x_1^{a_1} x_2^{a_2}$ \ where  $a_1, a_2 \in Z$ and $i\in \{1,\ldots, n\}$.
Since $\Z^k$ is normal, we also have the following list of equalities for some fixed integers $c_{1,1,j}$, $c_{1,2,j}$, $c_{2,1,j}$ and $c_{2,2,j}$:
\begin{center}
{$x_1 t_j=t_j x_1^{c_{1,1,j}} x_2^{c_{1,2,j}}$ \  and \  $x_2 t_j=t_j x_2^{c_{2,1,j}} x_2^{c_{2,2,j}}$ where $j=1,\ldots, n$.}
\end{center}
In addition, there are integer constants $c_i$ and $c_j$ such that $t_i t_j= t_k x^{c_i}_1 x_2^{c_j}$ for all $i,j=1,\ldots, n$. Taking all these into account we can now perform the group operation on $G$ as follows:
\begin{center}{
$t_i x_1^{a_1} x_2^{a_2} \cdot t_j x_1^{b_1} x_2^{b_2}=t_it_j
x_1^{a_1c_{1,1,j}+a_2c_{2,1,j}+b_1}  x_2^{a_1c_{1,2,j}+a_2c_{2,2,j}+b_2}$=
$t_k x^{c_i}_1 x_2^{c_j}x_1^{a_1c_{1,1,j}+a_2c_{2,1,j}+b_1}  x_2^{a_1c_{1,2,j}+a_2c_{2,2,j}+b_2}$.
}
\end{center}
All these operations can now be performed by finite automata.  This proves the lemma.
\end{proof}

The next lemma again suits a more general case of monoids. A {\bf monoid} is a structure $(M; \cdot)$, where $\cdot$ is an associative binary operation on $M$.

\begin{lemma} \label{L:Monoids}
If $(M;\cdot)$ is an automatic monoid then for all $m_1,\ldots, m_n \in M$ the following inequality holds true:
$$
|m_1 \cdots \ldots \cdot m_n| \leq max\{|m_i| \mid i=1,\ldots, n\} + C\cdot log(n),
$$
where $C$ is a constant.
\end{lemma}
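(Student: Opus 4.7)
The plan is to combine the Constant Growth Lemma (Lemma~\ref{L:CGL}) with a balanced parenthesization of the product, exploiting associativity of $\cdot$. The key point is that a linear chain of multiplications $(\cdots((m_1\cdot m_2)\cdot m_3)\cdots\cdot m_n)$ would only give an additive bound of the form $L+C_0(n-1)$, which is far too weak; a balanced binary tree drops this to $C_0\lceil\log_2 n\rceil$.

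First, I would apply the Constant Growth Lemma to the binary operation $\cdot : M\times M \to M$, whose graph is FA recognizable by hypothesis. This yields a constant $C_0$ such that for all $a,b\in M$,
\[
|a\cdot b| \;\leq\; \max(|a|,|b|) + C_0.
\]
Set $L = \max\{|m_i| \mid 1\leq i\leq n\}$. Next, using associativity, I would parenthesize the product $m_1\cdot m_2\cdots m_n$ as a balanced binary tree of depth $d=\lceil\log_2 n\rceil$: group the factors into consecutive pairs (leaving at most one singleton at each level), then recurse on the resulting sequence of at most $\lceil n/2\rceil$ elements. This yields an expression whose evaluation proceeds in $d$ stages.

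Then I would prove by induction on the stage $j$ (for $0\leq j\leq d$) that every intermediate product occurring at stage $j$ has length at most $L + jC_0$. The base case $j=0$ is immediate since the stage-0 elements are exactly the $m_i$. For the inductive step, each stage-$(j+1)$ product is either a carried-over singleton from stage $j$ (length $\leq L+jC_0 \leq L+(j+1)C_0$) or of the form $a\cdot b$ where $a,b$ are stage-$j$ elements, so by the Constant Growth Lemma $|a\cdot b|\leq \max(|a|,|b|)+C_0 \leq L+(j+1)C_0$. Applying this at $j=d$ gives
\[
|m_1\cdot m_2\cdots m_n| \;\leq\; L + C_0\lceil \log_2 n\rceil \;\leq\; L + C\log n
\]
for a suitable constant $C$ (absorbing the ceiling and the change of logarithm base into $C$).

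I do not expect any serious obstacle: the only subtlety is handling odd-sized levels in the balanced tree, which is dealt with cleanly by allowing a singleton to be carried up unchanged. The main conceptual ingredient—the Constant Growth Lemma—is already proved, and the rest is bookkeeping enabled by associativity.
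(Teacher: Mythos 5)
Your proposal is correct and is essentially the paper's own argument: the paper also applies the Constant Growth Lemma to the binary operation and then splits the product in half recursively, which is exactly your balanced binary tree of depth $O(\log n)$. Your version just spells out the level-by-level bookkeeping (ceilings, odd-sized levels) a bit more explicitly than the paper does.
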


\begin{proof}
Let $C$ be the the number required by The Constant Growth Lemma (see Lemma \ref{L:CGL}). By the lemma we have:
$$
(\#) \hspace{5mm} |m_1\cdot m_2| \leq max\{|m_1|, |m_2|\}+C
$$
for all $m_1, m_2\in M$. So, for $n=1,2$ the lemma is obvious. For $n>2$ we write $n=n_1+n_2$ with  $n_1=n/2$.  Consider the elements
$$
x=m_1\cdot \ldots \cdot m_{n_1} \hspace{1mm} and \hspace{1mm} y=m_{n_1+1}\cdot \ldots \cdot m_n.
$$
From the induction assumption we have the following inequalities:
$$
|x| \leq max_{1\leq i\leq  n_1} |m_i| + C\cdot log(n_1) \hspace{2mm} \mbox{and} \hspace{2mm}
|y|\leq max_{n_1< i \leq  n} |m_i| + C\cdot log(n_1)
$$
Therefore from the inductive assumption and $(\#)$ we have:
$$
|x\cdot y| \leq max\{|x|, |y|\}+C \leq max\{|m_i| \mid i=1,\ldots, n\} + C\cdot log(n).
$$
Thus, we have the desired inequality.
\end{proof}


Let $X=\{g_1,\ldots, g_k\}$ be the set of generators of the group $G$. For each element $g\in G$, let $\delta(g)$ be the minimum  $n$ such that $g=a_1\cdot \ldots \cdot a_n$ in the group $G$, where each $a_i \in X$. Now we define the following:
$$
G_n=\{g\in G\mid \delta(g)\leq n\} \ \mbox{and} \ gr_{\G}(n)=|G_n|.
$$
The function $gr_{\G}$ is called the {\bf growth function} of the group $G$.

\begin{lemma}
If $G$ is FA-presentable then its growth function is bounded by a polynomial.
\end{lemma}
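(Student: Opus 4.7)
The plan is to combine Lemma \ref{L:Monoids} (the logarithmic length bound for products in automatic monoids) with a simple counting argument over strings of bounded length in the FA presentation. The key observation is that every element of $G_n$ is a product of at most $n$ generators, so its string representation cannot be much longer than $\log n$.

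First I would set $M = \max\{|g_i| : 1 \leq i \leq k\}$, which is a constant depending only on the fixed generating set $X$ and the chosen FA presentation. An arbitrary element $g \in G_n$ can be written as a product $g = a_1 \cdot \ldots \cdot a_m$ with $m \leq n$ and each $a_i \in X$. Applying Lemma \ref{L:Monoids} to the automatic monoid structure $(G; \cdot)$ yields a constant $C$ (depending only on the FA presentation of $G$) such that
$$|g| \leq \max_{1 \leq i \leq m} |a_i| + C \log m \leq M + C \log n.$$
Thus every element of $G_n$ is represented, in the FA presentation, by a string of length at most $M + C \log n$.

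Next I would count. Let $\Sigma$ be the alphabet of the FA presentation. The total number of strings of length at most $M + C \log n$ over $\Sigma$ is at most
$$\sum_{\ell = 0}^{M + C \log n} |\Sigma|^{\ell} \leq (M + C \log n + 1) \cdot |\Sigma|^{M + C \log n} = O\bigl(n^{C \log |\Sigma|} \cdot \log n\bigr),$$
which is polynomial in $n$. Since the map from strings to group elements is a bijection on the FA-recognizable domain, this upper bound transfers to $|G_n| = \mathrm{gr}_G(n)$, giving $\mathrm{gr}_G(n) \leq P(n)$ for some polynomial $P$.

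I do not expect a real obstacle here; the work has already been done by Lemma \ref{L:Monoids} and the Constant Growth Lemma behind it, and the remainder is a crude counting argument. The only minor care point is to make sure the bound $M$ on generator lengths and the constant $C$ from Lemma \ref{L:Monoids} are genuinely absolute constants independent of $n$ (they are, since $X$ is finite and the FA presentation is fixed), so the degree of the resulting polynomial, $C \log |\Sigma|$, depends only on the presentation. The case $|\Sigma| = 1$ is even easier and gives a linear bound directly.
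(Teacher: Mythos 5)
Your proof is correct and takes essentially the same route as the paper's: apply Lemma \ref{L:Monoids} to bound the length of the string representing any element of $G_n$ by $O(\log n)$, then count the strings of that length over $\Sigma$ to get a polynomial bound. The paper's version is just terser, absorbing the maximum generator length into the constant and writing the count directly as $|\Sigma|^{C\log n}\leq n^{C_1}$.
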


\begin{proof}
By Lemma \ref{L:Monoids}, for each $g\in G_n$ we have \
$\delta(g)\leq C log(n)$,  where $C$ is a constant.
Therefore there is a constant $C_1$ such that
$$
(\star) \hspace{15mm} gr_{\G}(n)\leq |\Sigma|^{Clog(n)}\leq 2^{C_1log(n)}\leq n^{C_1}.
$$
This proves the lemma.
\end{proof}

Now we need two deep results  from group theory. The first  is the theorem of Gromov stating that  finitely generated groups with polynomial growth  are all virtually nilpotent \cite{gromov}. The second  is  the theorems of Romanovski \cite{Romanovski80} and Noskov\cite{Noskov84} stating that a virtually solvable group has a decidable first order theory if and only if it is virtually Abelian. Virtually nilpotent groups are virtually solvable.
Thus,we have proved the following:

\begin{theorem} \label{FA-presentable}
A finitely generated group is FA-presentable if and only if it is virtually Abelian.
\end{theorem}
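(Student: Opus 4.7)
The plan is to prove both directions separately, with the bulk of the work going into the ``only if'' direction since the ``if'' direction has essentially been established in Lemma \ref{L:GR-Groups}.

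For the easy direction, suppose $G$ is virtually abelian. Then $G$ contains a normal torsion-free abelian subgroup $A \cong \Z^k$ of finite index $n$. I would simply cite Lemma \ref{L:GR-Groups}, which gives an explicit FA presentation: one encodes each element as a pair $(t_i, (a_1,\ldots,a_k))$ where $t_i$ is one of the finitely many coset representatives of $A$ in $G$ and $(a_1,\ldots,a_k) \in \Z^k$ is written in binary. Multiplication translates to a combination of reading the finite table for $t_it_j$, applying the conjugation action of the $t_j$ on $\Z^k$ (which is linear with fixed integer coefficients), and addition in $\Z^k$, all of which are FA recognizable.

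The harder direction is: if $G$ is finitely generated and FA presentable then $G$ is virtually abelian. The strategy chains together three ingredients that have already been set up.  First, I would fix any finite generating set $X = \{g_1,\ldots,g_k\}$ for $G$ and apply Lemma \ref{L:Monoids} to the FA presentable monoid $(G,\cdot)$: for any $g \in G_n$ with $g = a_1 \cdots a_n$ and each $a_i \in X$, one gets
\[
|g| \leq \max_i |a_i| + C\log n \leq M + C\log n,
\]
where $M = \max_i |g_i|$ and $C$ is the constant supplied by the Constant Growth Lemma. Hence every $g \in G_n$ has length at most $M + C\log n$ in the finite alphabet $\Sigma$ of the FA presentation, so
\[
gr_G(n) = |G_n| \leq |\Sigma|^{M+C\log n} \leq n^{C_1}
\]
for some constant $C_1$. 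Thus $G$ has polynomial growth.

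The second ingredient is Gromov's theorem, which upgrades polynomial growth to \emph{virtually nilpotent}. The third ingredient is the Romanovski--Noskov theorem: a virtually solvable (in particular, virtually nilpotent) group has decidable first-order theory if and only if it is virtually abelian. To activate it, I would note that by Theorem \ref{FDT}(3) the first-order theory of the automatic structure $(G;\cdot)$ is decidable; combined with Gromov's conclusion this forces $G$ to be virtually abelian. The main obstacle is thus not the automata-theoretic manipulation (which is essentially the Constant Growth Lemma plus a geometric-series estimate) but the need to cite two nontrivial external classification results; the automata side itself is comparatively routine.
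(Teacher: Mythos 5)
Your proposal is correct and follows essentially the same route as the paper: Lemma \ref{L:GR-Groups} for the ``if'' direction, and for the ``only if'' direction the chain Constant Growth Lemma $\Rightarrow$ Lemma \ref{L:Monoids} $\Rightarrow$ polynomial growth $\Rightarrow$ virtually nilpotent (Gromov), combined with decidability of the first-order theory from Theorem \ref{FDT}(3) and the Romanovski--Noskov theorem to conclude virtually abelian. Your statement of the length bound $|g|\leq M+C\log n$ is in fact a slightly cleaner rendering of the estimate the paper uses in its growth lemma.
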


Since all virtually abelian groups are graph automatic we have the following result:

\begin{corollary}
Every finitely generated FA-prsentable group is graph automatic.\qed
\end{corollary}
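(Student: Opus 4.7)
The plan is to combine two facts already established in the paper: the model-theoretic classification of finitely generated FA-presentable groups (Theorem \ref{FA-presentable}) and the closure of the class of graph automatic groups under finite extensions (Theorem \ref{Thm:FE}). So the corollary should fall out in a few lines with no new combinatorial work.

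First, I would invoke Theorem \ref{FA-presentable} to reduce to the virtually abelian case: if $G$ is a finitely generated FA-presentable group, then $G$ contains a normal abelian subgroup $A$ of finite index. Since $G$ is finitely generated and $[G:A]<\infty$, a standard Schreier-type argument shows that $A$ is itself finitely generated, so $A \cong \Z^n \oplus F$ for some $n \in \N$ and some finite abelian group $F$.

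Next, I would apply Example \ref{Ex:Abelian} to conclude that the subgroup $A$ is graph automatic: its Cayley graph is presentable over the alphabet of binary-encoded tuples (for the $\Z^n$ part) together with a finite alphabet for the torsion part, and multiplication by each generator is recognized by a finite automaton as explained there. Finally, since $G$ is a finite extension of the graph automatic group $A$, Theorem \ref{Thm:FE} guarantees that $G$ is itself graph automatic.

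There is no real obstacle here; every ingredient is already in hand. The only point that requires even a moment of thought is ensuring that the finite-index abelian subgroup produced by Theorem \ref{FA-presentable} is finitely generated, which follows from the fact that finite-index subgroups of finitely generated groups are finitely generated. The rest is a direct assembly of prior results, so the proof of the corollary reduces to a one-line citation chain: Theorem \ref{FA-presentable} $\Rightarrow$ virtually abelian $\Rightarrow$ finite extension of a f.g.\ abelian group $\Rightarrow$ (Example \ref{Ex:Abelian} and Theorem \ref{Thm:FE}) graph automatic.
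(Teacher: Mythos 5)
Your proposal is correct and follows essentially the same route the paper intends: Theorem \ref{FA-presentable} reduces to the virtually abelian case, Example \ref{Ex:Abelian} handles the finitely generated abelian normal subgroup, and Theorem \ref{Thm:FE} lifts graph automaticity through the finite extension. The paper compresses all of this into the single remark that virtually abelian groups are graph automatic; you have merely supplied the (correct) details, including the observation that the finite-index subgroup is finitely generated.
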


\end{document}